\numberwithin{equation}{section}
\theoremstyle{plain}
\newtheorem{theorem}[subsubsection]{Theorem}
\newtheorem{lemma}[subsubsection]{Lemma}
\newtheorem{prop}[subsubsection]{Proposition}
\newtheorem{cor}[subsubsection]{Corollary}
\newtheorem{conj}[subsubsection]{Conjecture}
\theoremstyle{definition}
\newtheorem{defn}[subsubsection]{Definition}
\newtheorem{cons}[subsubsection]{Construction}
\newtheorem{remark}[subsubsection]{Remark}
\newtheorem{remarks}[subsubsection]{Remarks}
\newtheorem{example}[subsubsection]{Example}
\def\AA{\mathbb{A}}
\def\BA{\mathbb{A}}
\def\CC{\mathbb{C}}
\def\GG{\mathbb{G}}
\def\G{\mathbb{G}}
\def\LL{\mathbb{L}}
\def\NN{\mathbb{N}}
\def\RR{\mathbb{R}}
\def\BR{\mathbb{R}}
\def\TT{\mathbb{T}}
\def\ZZ{\mathbb{Z}}
\def\BZ{\mathbb{Z}}
\def\calB{\mathcal{B}}
\def\calC{\mathcal{C}}
\def\calD{\mathcal{D}}
\def\D{\mathcal{D}}
\def\calF{\mathcal{F}}
\def\cF{\mathcal{F}}
\def\calG{\mathcal{G}}
\def\calL{\mathcal{L}}
\def\calO{\mathcal{O}}
\def\cO{\mathcal{O}}
\def\calP{\mathcal{P}}
\def\calQ{\mathcal{Q}}
\def\cQ{\mathcal{Q}}
\def\cS{\mathcal{S}}
\def\ZZ{\mathbb{Z}}
\newcommand\frH{\mathfrak{H}}
\newcommand\frP{\mathfrak{P}}
\newcommand\tilP{\widetilde{P}}
\newcommand\tilW{\widetilde{W}}
\newcommand\tilcalO{\widetilde{\mathcal{O}}}
\newcommand\tilcalP{\widetilde{\mathcal{P}}}
\newcommand\tilLL{\widetilde{\mathbb{L}}}
\newcommand{\Coh}{\textup{Coh}}
\newcommand{\coker}{\textup{coker}}
\newcommand\Fun{\textup{Fun}}
\newcommand\id{\textup{id}}
\newcommand\Mod{\textup{Mod}}
\newcommand{\Perf}{\textup{Perf}}
\newcommand\Spec{\textup{Spec}}
\newcommand\Hom{\textup{Hom}}
\newcommand\nc{\newcommand}
\nc\on{\operatorname}
\nc\ol{\overline}
\nc\ul{\underline}
\nc\olY{\ol{Y}}
\nc\oo{\infty}
\nc\Cone{\mathit{Cone}}
\nc\ssupp{\mathit{ss}}
\nc\risom{\stackrel{\sim}{\to}}
\nc\Sh{\mathit{Sh}}
\nc\un{\diamondsuit}
\nc\orient{\mathit{or}}
\nc\sing{\mathit{sing}}
\nc\MF{\on{MF}}
\nc\inthom{\mathit{Hom}}
\nc\colim{\on{colim}}
\nc\wmsh{\mu\mathit{sh}^{wr}}
\nc\conv{\mathit{conv}}
\nc\trad{\mathit{inf}}
\nc\wrap{\mathit{wr}}
\nc\Log{\on{Log}}
\nc\Arg{\on{Arg}}
\begin{document}


\title[Mirror symmetry for honeycombs]{Mirror symmetry for honeycombs}

\author{Benjamin Gammage and David Nadler}
\address{Department of Mathematics\\University of California, Berkeley\\Berkeley, CA  94720-3840}
\email{bgammage@math.berkeley.edu}
\email{nadler@math.berkeley.edu}

\begin{abstract} We prove a homological mirror symmetry equivalence between the $A$-brane category of the pair of pants,
computed as a wrapped microlocal sheaf category, and the $B$-brane category of its mirror LG model, understood as a category of matrix factorizations. 
The equivalence improves upon prior results in two ways:  it intertwines evident affine Weyl group symmetries on both sides,
and it exhibits the relation of wrapped microlocal sheaves along different types of Lagrangian skeleta  for the same hypersurface. 
The equivalence proceeds through the construction of a combinatorial realization of the $A$-model via arboreal singularities. 
The constructions here represent the start of a program to generalize to higher dimensions many of the structures which have appeared in topological approaches to Fukaya categories of surfaces.
\end{abstract}

\maketitle


\tableofcontents


\section{Introduction}

This paper fits into the framework of  Homological Mirror Symmetry, as introduced in \cite{kontsevich1995homological} and expanded in \cite{K98,HV00,KKOY}. The  formulation of interest to us 
relates the $A$-model of a hypersurface $X$ in a toric variety to the mirror Landau-Ginzburg $B$-model
of a toric variety $X^\vee$ equipped with superpotential $W^\vee\in \calO(X^\vee).$ 
Following Mikhalkin~\cite{M}, a distinguished ``atomic" case is when the hypersurface is the
pair of pants 
$$
\calP_{n-1}=\{z_1+\cdots+z_n+1=0\}\subset(\CC^*)^n\cong T^*(S^1)^n
$$
with mirror  Landau-Ginzburg model $(\AA^{n+1},z_1\cdots z_{n+1})$.
In this paper, we will also be interested in the universal abelian cover $\tilcalP_{n-1}$ of the pair of pants, which fits in the Cartesian diagram
$$
\xymatrix{
\tilcalP_{n-1}\ar[r]\ar[d]&T^*\RR^n\ar[d]\\
\calP_{n-1}\ar@{^{(}->}[r]&T^*(S^1)^n
}
$$
as the pullback of $\calP_{n-1}$ along the universal covering map $T^*\RR^n\to T^*(S^1)^n$; it has mirror a torus-equivariant version of the Landau-Ginzburg model $(\AA^{n+1},z_1\cdots z_{n+1}).$
%
%

This paper expands upon prior mirror symmetry equivalences for pairs of pants found in \cite{seidelgenustwo,AAEKO,Sher1};
however, it differs from those in its understanding of the $A$-model.
The traditional mathematical realization of the $A$-model is the Fukaya category, with objects decorated Lagrangian submanifolds, morphisms their decorated intersections, and structure constants defined by  integrals over moduli spaces of pseudoholomorphic polygons. There is increasing evidence (for example, \cite{SGoHA,BonOr,FLTZ2,NZ,Ncs, Tam,GPS1,St,GPS2,GPS3})
that the Fukaya category of a Weinstein manifold  is equivalent to microlocal sheaves (as developed by Kashiwara-Schapira \cite{KS}) along a Lagrangian skeleton.
In this paper, we follow \cite{Nwms} and study the $A$-model of the pair of pants  in its guise as wrapped microlocal sheaves.

A calculation of microlocal sheaves on a skeleton for the pair of pants was performed already in \cite{Nwms}; our calculation here involves a different skeleton, which is of independent interest. The skeleton we study here is more symmetrical, having an action of the symmetric group $\Sigma_{n+1}$ rather than just $\Sigma_n$; but more importantly, the skeleton here is of a different ``flavor'' to the one constructed there. The calculations from \cite{Nwms} are well-adapted to considerations of mirror symmetry which relate a hypersurface in $(\CC^\times)^n$ to a toric degeneration and were used in \cite{GS} for this purpose. The skeleton we study in this paper is more adapted to mirror symmetry equivalences which relate a hypersurface in $(\CC^\times)^n$ to a Landau-Ginzburg model. The first sort of skeleton can be considered as a ``degeneration'' of the second--indeed, the relation between these two flavors of skeleton is very interesting and will be studied further in future work.

The skeleton from this paper is very well-suited to a combinatorial perspective, since singularities are all arboreal in the sense of \cite{Narb}. The form of our calculations should be understood as a paradigm for extending the substantial literature devoted to understanding Fukaya categories of Riemann surfaces through topological skeleta and ribbon graphs  (for example, \cite{STZ,B,DK,HKK,PS}) to higher-dimensional examples.

Moreover, the type of skeleton described here has close relations to the dimer models which have appeared in earlier mirror symmetry contexts (\emph{e.g.}, \cite{FHKV,FU}); in future work, we hope to explore further the relation between skeleta and dimer models, along with generalizations to higher dimensions.
This correspondence was noticed (in a slightly different form) in \cite{FU}, and in Section~\ref{subsec:skel} we make use of the Lefschetz fibrations described in that paper.

\subsection{Symplectic geometry}

\subsubsection{Cotangent bundles} 

Fix a characteristic zero coefficient field $k,$ and let $L\subset T^*X$ be a closed conic Lagrangian submanifold of a cotangent bundle. There are conic sheaves of dg categories $\mu\Sh_L^\Diamond$ and $\mu\Sh_L$ on $T^*X$, localized along $L,$ which to a conic open set $\Omega\subset T^*X$ assign, respectively, the dg category $\mu\Sh_L^\Diamond(\Omega)$ of unbounded-rank microlocal sheaves and the dg category $\mu\Sh_L(\Omega)$ of traditional microlocal sheaves along $L\cap \Omega.$
\begin{defn}
The category $\mu\Sh_L^{wr}(\Omega)$ of \emph{wrapped microlocal sheaves} along $L\cap \Omega$ is the category $\mu\Sh_L^\Diamond(\Omega)^c$ of compact objects inside of $\mu\Sh_L^\Diamond(\Omega).$
\end{defn}

\begin{prop}[\cite{Nwms} Proposition 3.16]
The assignment $\Omega\mapsto\mu\Sh_L^{wr}(\Omega)$ forms a cosheaf $\mu\Sh_L^{wr}$ of dg categories on $T^*X,$ localized on $L.$
\end{prop}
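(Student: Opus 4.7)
The plan is to bootstrap from the fact that $\mu\Sh_L^\Diamond$ is already a sheaf of presentable dg categories on $T^*X$, localized on $L$, and to convert this sheaf into a cosheaf via the standard duality between limits of presentable categories and colimits of the left adjoints of their restriction maps. Passing to compact objects then converts the colimit of presentable categories into a colimit of small dg categories, which is precisely the cosheaf condition for $\mu\Sh_L^{wr}$.

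Concretely, for any inclusion of conic opens $\Omega'\subset\Omega$, I would verify two facts about the restriction functor $r\colon\mu\Sh_L^\Diamond(\Omega)\to\mu\Sh_L^\Diamond(\Omega')$: first, that it admits a left adjoint $l$; second, that $l$ preserves compact objects. Existence of $l$ follows from the adjoint functor theorem applied to $r$, which preserves all limits since restriction is computed stalkwise in the unbounded category; geometrically, $l$ is extension-by-zero of a microlocal sheaf from $\Omega'$ to $\Omega$. That $l$ preserves compact objects is equivalent to $r$ preserving filtered colimits, which again holds by the stalkwise description of colimits in $\mu\Sh_L^\Diamond$.

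Granted these two facts, I would invoke Lurie's duality $\on{Pr}^L\simeq(\on{Pr}^R)^{\op}$ to rewrite the \v{C}ech limit diagram for $\mu\Sh_L^\Diamond$ on a conic open cover $\Omega=\bigcup_i\Omega_i$ as a colimit in $\on{Pr}^L$ whose structure maps are the left adjoints $l$. Applying the compact-object functor, which carries colimits in compactly generated $\on{Pr}^L$ along compact-preserving maps to colimits in small dg categories, yields
$$\mu\Sh_L^{wr}(\Omega)\;\isom\;\colim\, \mu\Sh_L^{wr}(\Omega_{i_1}\cap\cdots\cap\Omega_{i_k}),$$
which is exactly the cosheaf condition. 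Localization along $L$ is inherited from the same property of $\mu\Sh_L^\Diamond$, since the assignment $\Omega\mapsto\mu\Sh_L^{\Diamond}(\Omega)$ depends only on $L\cap\Omega$ and the compact-object functor is a categorical operation.

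The main obstacle is the second step: establishing existence of the left adjoints to restriction and their preservation of compact objects. Away from the singular locus of $L$ this is immediate, as the microlocal category is locally equivalent to modules over $k$. The content is concentrated near the singular points of $L$, where one uses a local model for microlocal sheaves (for example, the arboreal/quiver models developed later in the paper) to present restriction as the restriction-of-representations functor along a subquiver; its left adjoint is left Kan extension by zero, which manifestly sends compact projective generators to compact projective generators.
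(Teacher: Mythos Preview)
The paper does not give its own proof of this proposition: it is stated as a citation of \cite[Proposition~3.16]{Nwms}, both in the introduction and again in Section~\ref{s:microsh}. So there is no argument in the present paper to compare against directly.

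That said, your outline is the correct one and is essentially the argument of \cite{Nwms}. Indeed, the paper itself invokes exactly the same mechanism later (the lemma attributed to Lurie via \cite[Lemma~1.3.3]{G}) when it converts the combinatorial sheaf $\calQ_{n-1}^{inf}$ into the cosheaf $\calQ_{n-1}^{wr}$ by passing to left adjoints and taking compact objects. Your use of $\on{Pr}^L\simeq(\on{Pr}^R)^{\op}$ is the same trick.

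Two small comments. First, your justification that restriction preserves limits (``computed stalkwise'') is the weakest link; in \cite{Nwms} this is handled by observing that the microlocal restriction is a Verdier quotient whose kernel (objects microsupported away from $\Omega'$) admits the requisite adjoint, so restriction has both adjoints. This holds uniformly and does not require a case analysis at singular points of $L$. Second, your final paragraph suggesting one verify the left adjoint near singularities via arboreal or quiver models is unnecessary for the general statement and somewhat circular in the context of this paper: the arboreal calculations here presuppose the cosheaf structure rather than establish it. The proof in \cite{Nwms} works for arbitrary closed conic Lagrangians, arboreal or not.
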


If $x$ is a smooth point of $L$ and $\Omega$ is a contractible conic neighborhood of $x$, then the stalk of the sheaf $\mu\Sh_L^\Diamond$ is equivalent to the dg category $\Mod_k$ of (unbounded-rank) $k$-modules, while the stalk of $\mu\Sh_L$ and the costalk of $\mu\Sh_L^w$ at $x$ are both equivalent to the dg category $\Perf_k$ of perfect $k$-modules.

If $x$ is a singular point, the local calculation is more complicated, but this calculation has already been performed in \cite{Narb} for a certain class of Legendrian singularities termed arboreal. In this paper we will only be concerned with the $A_n$ arboreal singularity $\calL_{A_n},$ a certain singular Legendrian in the projectivized cotangent bundle $T^\infty(\RR^n)$ which is homeomorphic to $\Cone(\text{sk}_{n-2}\Delta^n),$ the cone on the $(n-2)$-skeleton of an $n$-simplex.
\begin{prop}[\cite{Narb}] Let $L$ be a conic Lagrangian in $T^*\RR^n$ which is locally equivalent, near a point $x\in L,$ to the cone on $\calL_{A_n}.$ Then to a neighborhood of $x$, the sheaf $\mu\Sh_L$ and the cosheaf $\mu\Sh_L^{wr}$ each assign the category $A_n\on{-Perf}_k$ of perfect modules over the $A_n$ quiver.
\end{prop}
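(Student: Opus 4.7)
Since the proposition is cited directly to \cite{Narb}, the plan is to reproduce the essentials of that argument. The claim is purely local, so I would first reduce to the model case in which $L$ is literally the cone on $\calL_{A_n}$ inside $T^*\RR^n$ near the origin. By the conicality of $L$ together with the localization properties of the (co)sheaves in question, computing $\mu\Sh_L$ and $\mu\Sh_L^{wr}$ on a neighborhood of $x$ is equivalent to computing the stalk of $\mu\Sh_L$ and the costalk of $\mu\Sh_L^{wr}$ at the singular point of the model.

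For the unwrapped calculation, the strategy is to realize the cone on $\text{sk}_{n-2}\Delta^n$ as the singular Lagrangian skeleton of a Weinstein plumbing of $n$ copies of $T^*D^{n-1}$ arranged in an $A_n$ chain, or equivalently to present microlocal sheaves along the cone as sheaves on $\RR^n$ constructible with respect to a suitable hyperplane-like stratification and microsupported in the arboreal cone. Under this dictionary the $n$ top-dimensional smooth strata of the cone yield $n$ generating objects, and the incidences of the lower-dimensional faces in $\text{sk}_{n-2}\Delta^n$ reproduce the arrows and the zig-zag relations of the $A_n$ quiver. Identifying $\mu\Sh_L$ near $x$ with $A_n\on{-Perf}_k$ then reduces to verifying this presentation via Morse-theoretic stalk computations at the smooth strata, together with a check that there are no nontrivial higher $A_\infty$ operations beyond those of the ordinary quiver path algebra.

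The wrapped statement follows from the unwrapped one via the tautological identity $\mu\Sh_L^{wr}(\Omega) = \mu\Sh_L^\Diamond(\Omega)^c$: near $x$, the category $\mu\Sh_L^\Diamond$ is the presentable ind-completion of $\mu\Sh_L$, and because $A_n\on{-Perf}_k$ is generated by finitely many compact objects forming a smooth proper dg category, passing to compact objects returns $A_n\on{-Perf}_k$ itself. The agreement of stalks of $\mu\Sh_L$ with costalks of $\mu\Sh_L^{wr}$ at $x$ parallels the smooth-point case and reflects the finite-dimensionality of the $A_n$ path algebra.

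The main obstacle, and the substantive content of \cite{Narb}, is the geometric and combinatorial bookkeeping of the second paragraph: one must organize the stratification of the arboreal cone so that the generators-and-relations presentation of the $A_n$ quiver falls out of the simplicial combinatorics of $\Delta^n$, and then verify that higher operations match those of the quiver. Once this model is in place, the remaining steps---passage between wrapped and unwrapped versions, and the local-to-neighborhood reduction---are formal.
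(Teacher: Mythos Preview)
The paper itself offers no proof of this proposition; it is quoted directly from \cite{Narb} without further argument. Your sketch is therefore being compared to the content of that reference rather than to anything in the present paper.

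Your overall strategy is sound and matches \cite{Narb}: reduce to the standard local model, identify microlocal sheaves along it with constructible sheaves on the base subject to the arboreal singular-support condition, read off $A_n\text{-}\Perf_k$ from the combinatorics, and then deduce the wrapped statement by passing to compact objects in the ind-completion. The second alternative you propose for the unwrapped step --- constructible sheaves on $\RR^n$ microsupported in the arboreal cone --- is precisely how the computation in \cite{Narb} proceeds, and the wrapped-from-unwrapped deduction is exactly right.

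Two imprecisions are worth flagging. First, the description of the arboreal Lagrangian as the skeleton of an $A_n$ plumbing of $n$ copies of $T^*D^{n-1}$ is not correct as stated: the arboreal $A_n$ singularity has a \emph{single} cone point, whereas a chain plumbing of $n$ disks has $n-1$ distinct singular points; already for $n=2$ the arboreal model is a tripod (three half-lines), not the four-valent cross one gets from two transverse disks. The front-projection model (zero section together with half-conormals, as in the paper's proof of Proposition~\ref{prop:miccat}) is the one that actually yields the calculation. Second, the count of ``$n$ top-dimensional smooth strata'' does not match the topological model $\Cone(sk_{n-2}\Delta^n)$, which has $\binom{n+1}{2}$ top cells; the $n$ simple objects of $A_n\text{-}\Perf_k$ arise instead from chambers in the base of the front projection, not from top strata of the Lagrangian. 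These are bookkeeping issues rather than strategic gaps: if you discard the plumbing picture and track the front-projection stratification carefully, the plan goes through.
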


\subsubsection{Weinstein manifolds} Let $W$ be a Weinstein manifold. The Weinstein structure of $W$ endows it with a Lagrangian skeleton $\Lambda$, onto which $W$ deformation retracts. 

Since our definition of wrapped microlocal sheaf categories applies only in the setting of cotangent bundles, in order to apply it here we have to relate the geometry of our Weinstein manifold to the geometry of a cotangent bundle. Let $\Lambda$ be the skeleton of a Weinstein manifold $W$ and $U$ an open neighborhood of $\Lambda$ which is conic for the flow of the Liouville vector field of $W$, and suppose that there exists a manifold $X$ and a closed conic Lagrangian $L\subset T^*X$ such that $U$ is exact symplectomorphic to a neighborhood $\Omega$ of $L$ by a symplectomorphism taking $\Lambda$ to $L.$
\begin{defn}In the situation described above, the category $\mu\Sh_L^w(\Omega)$ is the \emph{microlocal $A$-model category} associated to the Weinstein manifold $W$. (To make explicit the dependence on Weinstein structure, we will sometimes call this category the wrapped microlocal $A$-model category of $W$ associated to $\Lambda.$)
\end{defn}

In practice, the symplectomorphism relating $\Lambda$ to a conic Lagrangian in a cotangent bundle might not exist. However, such symplectomorphisms always exist locally, so that we can obtain a cosheaf of categories on the skeleton by defining these categories locally, gluing them together, and checking that the resulting cosheaf didn't depend on choices. A more detailed explanation of our expectations can be found in Conjecture \ref{conj-cosheaf}.


One skeleton for the pair of pants $\calP_{n-1}$ was described in \cite{Nwms}, where it was used to prove a mirror symmetry equivalence. In this paper, we study a more symmetric skeleton of the pair of pants, which we can describe using the geometry of the permutohedron.

Let $V_n$ be the quotient of $\RR^{n+1}$ by the span of the vector $\lambda_1+\cdots+\lambda_{n+1},$ where $\{\lambda_i\}$ is the standard coordinate basis of $\RR^{n+1}.$ 

\begin{defn}
The \emph{n-permutohedron} $\frP_n \subset V_n$ is the convex poytope obtained as the convex hull
$$
\xymatrix{
\frP_n = \conv\{ \sigma\cdot \left(\frac{1}{n+1}\sum_{a=1}^{n+1}a\lambda_a\right) \in V_n \;|\; \sigma\in \Sigma_{n+1}\}.
}
$$
\end{defn}

The $n$-permutohedron is an $n$-dimensional polytope, and it is a remarkable fact that the permutohedron actually tiles $V_n$. We denote by $\frH_{n-1}$ the union of all translates of the boundary $\partial \frP_n$ along this tiling and call this space the \emph{honeycomb}. (When $n=2$, the honeycomb $\frH_1$ is actually the boundary of the hexagon tiling of the plane.) Then the main result of section 4 of this paper is a stronger version of the following, which we obtain as a $\ZZ/(n+1)\ZZ$ quotient of a calculation performed in \cite{FU}:
\begin{prop}[Corollary \ref{cor:honeycombskel} below]
The cover $\tilcalP_{n-1}$ admits a skeleton $\tilLL_{n-1}$ whose image under the (cover of the) argument map $\widetilde{\Arg}:T^*\RR^n\to \RR^n$ is the honeycomb $\frH_{n-1}.$
\end{prop}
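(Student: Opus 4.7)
My approach would be to invoke the Lefschetz fibration for the pair of pants constructed in \cite{FU}, from which one extracts a skeleton by taking the union of the critical locus of the fibration with Lefschetz thimbles over a spanning graph in the base. The text preceding the proposition indicates that FU's calculation is naturally performed on a space carrying an extra $\ZZ/(n+1)\ZZ$-symmetry; quotienting by this symmetry produces a Weinstein skeleton $\LL_{n-1}\subset\calP_{n-1}$ whose $\Arg$-image is a polyhedral complex in $(S^1)^n$ on which a $\Sigma_{n+1}$-action is manifest (enhancing the residual $\Sigma_n$ seen by other skeleta).

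Next, I would obtain $\tilLL_{n-1}\subset\tilcalP_{n-1}$ by pulling $\LL_{n-1}$ back along the universal covering map $T^*\RR^n\to T^*(S^1)^n$. Since the defining square of $\tilcalP_{n-1}$ in the introduction is Cartesian, this pullback is automatically a Lagrangian skeleton of $\tilcalP_{n-1}$. Because $\widetilde{\Arg}$ is the lift of $\Arg$ through the same square, the image $\widetilde{\Arg}(\tilLL_{n-1})$ is exactly the $\ZZ^n$-periodic preimage of $\Arg(\LL_{n-1})$ in $\RR^n$.

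It remains to identify this $\ZZ^n$-periodic complex with the honeycomb $\frH_{n-1}$. I would proceed cell by cell. By $\Sigma_{n+1}$-equivariance, the critical points of the superpotential must project to a $\Sigma_{n+1}$-orbit in $\RR^n$, and a convenient choice of reference critical point makes this orbit equal to the $\Sigma_{n+1}$-orbit of $\tfrac{1}{n+1}\sum_a a\lambda_a$, which is exactly the vertex set of a single permutohedral tile $\frP_n\subset V_n$. The Lefschetz thimbles in $\LL_{n-1}$ then project under $\widetilde{\Arg}$ to line segments between adjacent vertices, matching the edges of that tile; the $\ZZ^n$-translates of this data assemble into the full honeycomb.

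The main obstacle is the final identification: one must check that the thimble arrangement really does recover the entire boundary complex $\partial\frP_n$ and not merely its $1$-skeleton. Equivalently, the higher-dimensional faces of the permutohedron must arise as projections of appropriate parallel-transports of vanishing cycles produced by FU's fibration. Tracking these higher cells through the coamoeba projection is the combinatorial heart of the argument and is carried out in detail in Section~\ref{subsec:skel}; once performed, $\Sigma_{n+1}$- and $\ZZ^n$-equivariance propagate the matching across the entire tiling and yield the stated result.
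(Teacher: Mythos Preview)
Your high-level strategy---invoke the Futaki--Ueda Lefschetz fibration, quotient by the cyclic symmetry to descend to $\calP_{n-1}$, and then pull back to the universal abelian cover---matches the paper's route exactly, and the observation that $\widetilde{\Arg}(\tilLL_{n-1})$ is the $\Lambda_n$-periodic preimage of $\Arg(\LL_{n-1})$ is correct.

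Where your sketch diverges from the paper, and runs into trouble, is in the proposed cell-by-cell identification. You assert that the critical points project to the $\Sigma_{n+1}$-orbit $\{\sigma\cdot\tfrac{1}{n+1}\sum_a a\lambda_a\}$, i.e.\ to the full vertex set of $\frP_n$, and that the Lefschetz thimbles project to the edges. Neither holds for $n>1$. The fibration $p_n:Y_n\to\CC^\times$ has only $n+2$ critical points (a single $\ZZ/(n+2)$-orbit, not a $\Sigma_{n+1}$-orbit), while $\frP_n$ has $(n+1)!$ vertices; and the thimbles are $n$-dimensional disks, not arcs, so their $\Arg$-images are codimension-one walls rather than edges. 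The term ``superpotential'' is also out of place here: $W_{n+1}$ lives on the $B$-side, and there is no natural superpotential on $\calP_{n-1}$ whose critical locus you could be invoking.

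The paper (following \cite{FU}) does not attempt a direct cell-by-cell match. Instead the argument is \emph{inductive in $n$}: one assumes the skeleton of the fiber $Y_{n-1}$ already has permutohedral $\Arg$-image, forms the mapping torus over $S^1\subset\CC^\times$ to obtain an oblique prism over $\frP_{n-1}$ inside $T^{n+1}$, and then observes that attaching the $n+2$ handle disks along the vanishing cycles inserts the walls that subdivide this prism into $n+2$ copies of $\frP_n$. The permutohedral combinatorics thus emerges from the recursion $\frP_n \leadsto \frP_{n+1}$, not from matching critical data to low-dimensional cells. Your ``main obstacle'' paragraph gestures toward the right difficulty, but the setup preceding it would need to be replaced by this inductive picture.
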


This result and the discussion above justifies our modeling of the wrapped Fukaya category of $\tilcalP_{n-1}$ as the global sections of a certain cosheaf $\calQ_{n-1}^{wr}$ of dg categories on $\frH_{n-1}$ (and the infinitesimally wrapped Fukaya category as the global sections of a certain sheaf $\calQ_{n-1}^{inf}$ of dg categories on $\frH_{n-1}$).

\subsection{Combinatorics}

As mentioned above, the sheaf and cosheaf $\calQ_{n-1}^{inf}$ and $\calQ_{n-1}^{wr}$ assign $\Perf_k$ to a smooth point of $\frH_{n-1},$ but to know their descriptions over the whole skeleton $\frH_{n-1},$ we need to understand its singularities. These turn out to be singularities we already understand:

\begin{prop}[Proposition \ref{prop:sings} below]A neighborhood of a point in a codimension-$m$ face of a permutohedron in $\frH_{n-1}$ is stratified homeomorphic to the product of $\RR^{n-m}$ with the $A_m$ arboreal singularity $\calL_{A_m}$.
\end{prop}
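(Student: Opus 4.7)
The plan is to reduce by the symmetries of the honeycomb to the case of a vertex of a lower-dimensional honeycomb, and then to apply Voronoi-Delaunay duality for the $A_m$ root lattice.

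First I would exploit the face structure of the permutohedron. A codim-$m$ face $F$ of $\frP_n$ is indexed by an ordered set partition $\pi = (B_1,\ldots,B_{m+1})$ of $\{1,\ldots,n+1\}$, and decomposes as a product $F \cong \frP_{|B_1|-1} \times \cdots \times \frP_{|B_{m+1}|-1}$. Using the $\Sigma_{n+1}$-action and translation invariance of $\frH_{n-1}$, together with this product structure, the local geometry of $\frH_{n-1}$ near an interior point $p \in F$ factors as $\RR^{n-m}$ (tangent to $F$) times a transverse slice. The key combinatorial input is that the normal directions to $F$ in $V_n$ are precisely the linear functionals constant on each block $B_i$ modulo overall constants, so the transverse slice carries a canonical $V_m$-structure; under this identification the restriction of the tiling becomes the permutohedral tiling of $V_m$, with $F$ mapping to a vertex $v$ and the $m+1$ permutohedra meeting $F$ corresponding to the $m+1$ Voronoi cells meeting $v$.

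Next I would analyze this vertex case. The permutohedral tiling of $V_m$ is the Voronoi tessellation of the $A_m$ root lattice, and at a deep-hole vertex $v$ the $m+1$ closest lattice points form the vertex set of a Delaunay simplex combinatorially isomorphic to $\Delta^m$. Voronoi-Delaunay duality then identifies the link of $v$ in the tiling with the cell decomposition of $\partial\Delta^m \cong S^{m-1}$ dual to the simplicial one: a dual $k$-cell is indexed by a codim-$(k+1)$ face of $\Delta^m$, equivalently (by complementation) by a $k$-face of $\Delta^m$. Since $\frH_{m-1}$ consists of the codim-$\geq 1$ strata of the tiling, its link at $v$ is combinatorially the $(m-2)$-skeleton of this dual structure, which the complementation identification carries to $\text{sk}_{m-2}\Delta^m$. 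Coning off yields the local model $\Cone(\text{sk}_{m-2}\Delta^m) = \calL_{A_m}$ given in the excerpt.

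Combining the two steps, a neighborhood of $p$ in $\frH_{n-1}$ is stratified homeomorphic to $\RR^{n-m}\times\calL_{A_m}$, with the codim-$k$ strata of $\frH_{n-1}$ through $F$ corresponding precisely to the product of $\RR^{n-m}$ with the conical strata of $\calL_{A_m}$. I expect the first step to be the main obstacle: while the factorization of the local geometry as $F \times (\text{transverse slice})$ is clear from the translational symmetry along $F$, rigorously identifying the transverse slice with the vertex-local picture of a smaller permutohedral tiling requires checking that the ``block collapse'' realizes the restriction of the tiling to the transverse $V_m$ as the full permutohedral tiling, with adjacencies preserved. The Voronoi-Delaunay analysis in the second step is classical.
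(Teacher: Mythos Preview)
Your proposal is correct, and the route is genuinely different from the paper's.

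The paper argues in the opposite order of reduction. It first treats the vertex case in $\frH_{n-1}$ directly, using the Cayley-graph description of $\frP_n$: a vertex $v$ lies on $n$ edges of the permutohedron containing it, and one checks by hand that the surrounding translates contribute exactly one further edge, so $v$ has $n+1$ edges in $\frH_{n-1}$; by the vertex-transitive $\tilW_n$-symmetry, any $k$ of these edges span a $k$-face, and this identifies the face poset of the link with the poset of proper nonempty subsets of $[n+1]$, i.e.\ with the face poset of $\mathrm{sk}_{n-2}\Delta^n$. The codimension-$m$ case is then deduced \emph{within the same honeycomb} by shrinking a ball around a nearby vertex until it misses all strata of dimension below $n-m$, which passes to a sub-poset realizing $\mathrm{sk}_{m-2}\Delta^m$.

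Your argument instead reduces first, via the block-collapse transverse slice, to the vertex case of the \emph{lower-dimensional} honeycomb $\frH_{m-1}$, and then invokes Voronoi--Delaunay duality for the lattice $\Lambda_m$ to identify the link. This is more conceptual and modular: the Voronoi--Delaunay step is classical, and the transverse-slice step is a natural consequence of the product decomposition of permutohedral faces. The paper's approach, by contrast, is entirely elementary and self-contained, needing no lattice-geometric input beyond the Cayley-graph and tiling facts already recorded in the paper. Your honest flag that the transverse-slice identification is the point requiring care is accurate; the paper sidesteps this entirely by never changing the ambient dimension.

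One small terminological correction: the lattice $\Lambda_m$ whose Voronoi cell is $\frP_m$ is the weight lattice $A_m^*$, not the root lattice $A_m$ (for $m\geq 3$ these differ, and only $A_m^*$ has simplicial Delaunay cells throughout). This does not affect your argument, since you only use that the Delaunay cell at each Voronoi vertex is an $m$-simplex, which holds for $A_m^*$.
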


(In fact, to prove the equivalence of (co)sheaves of categories we need, the above homeomorphism isn't sufficient; it's necessary to check the stronger statement that a neighborhood of the corresponding point in the skeleton is contactomorphic, up to a smoothing, to the product $\RR^{n-m}\times\calL_{A_m},$ or that it admits the same category of microlocal sheaves. This is Proposition~\ref{prop:miccat} below.)

The upshot is that all the singularities of the skeleton $\tilLL_{n-1}$ of $\tilcalP_{n-1}$ are of type $A_m$ (for various $m$) which proves to be extremely convenient for calculation of the Fukaya category. As the name suggests, the sections of $\calQ^{inf}_{n-1}$ or $\calQ^{wr}_{n-1}$ on a neighborhood of the $A_m$ singularity $\calL_{A_m}$ are equivalent to the dg category $A_m\on{-Perf}_k$ of perfect modules over the $A_m$ quiver. 

\begin{remark}The equivalence of the category associated to an $A_m$ singularity with the category $A_m\on{-Perf}_k$ is noncanonical, reflecting the fact that the category $A_m\on{-Perf}_k$ has a $\ZZ/(m+1)$ symmetry. Moreover, due to the standard appearance of the ``metaplectic anomaly" in the construction of Fukaya categories, we cannot keep global track of the integer grading on this category without making additional choices, so in practice we will only ever work with a $\ZZ/2$-graded version of this local category, which is (noncanonically) equivalent to the $\ZZ/2$-dg category $(A_m\on{-Perf}_k)_{\BZ/2}.$ 
\end{remark}

We can summarize the above discussion as describing the following procedure: Stratify the space $\frH_{n-1}$ by singularity type, and let $P(\frH_{n-1})$ be the poset corresponding to the stratification. The Fukaya category associated to the skeleton $\frH_{n-1}$ is the global sections of a sheaf/cosheaf, taking values in the $\ZZ/2$-dg category $\BZ/2\on{-dgst}_k$ of $\ZZ/2$-dg categories, which assigns to a neighborhood of a point in a codimension-$(m-1)$ stratum of $\frH_{n-1}$ a category equivalent to $(A_m\on{-Perf}_k)_{\BZ/2}.$ 
\begin{defn} The \emph{wrapped} and \emph{infinitesimally wrapped combinatorial Fukaya categories} associated to the pair of pants are the categories
$$
\xymatrix{
Q_{n-1}^{wr}=\on{Idem}(\colim(P(\frH_{n-1})^{op} \ar[r]^-{\calQ^{wr}_{n-1}} & \BZ/2\on{-dgst}_k)) &
Q_{n-1}^{inf}=\lim(P(\frH_{n-1}) \ar[r]^-{\calQ^{inf}_{n-1}}&\BZ/2\on{-dgst}_k)
}
$$
defined as (idempotent-completed) global sections of the cosheaf $\calQ^{wr}_{n-1}$ and sheaf $\calQ^{inf}_{n-1},$ respectively, over the honeycomb $\frH_{n-1}.$
\end{defn}

Objects in the infinitesimally wrapped category, which is defined as global sections of the sheaf $\calQ_{n-1}^{inf},$ have a clearer geometric meaning: heuristically, an object of this category can be described as the data of an object of $(\Perf_k)_{\BZ/2}$ at each facet in $\frH_{n-1},$ exact triangles among these at codimension-2 faces of permutohedra, and higher compatibilities given by codimension $k$ faces. (For instance, the compatibility at a codimension 3 face involves assembling the four exact triangles around the face into an ``octahedral axiom diagram.")

For $F$ a facet in $\frH_{n-1}$ and $\xi$ a choice of codirection at $F$ (breaking the $\ZZ/2\ZZ$ indeterminacy in the category associated to $F$), we have a \emph{stalk functor} $\phi_{F,\xi}:Q^{inf}_{n-1}\to(\Perf_k)_{\BZ/2}$ taking an object of $Q^{inf}_{n-1}$ to the object of $(\Perf_k)_{\BZ/2}$ which is placed at the facet $F.$ If we understand the data of an object in $Q_{n-1}^{inf}$ as recording a path of a Lagrangian running along the honeycomb $\frH_{n-1}$, then the stalk of an object along a facet $F$ records how many times the Lagrangian runs along $F$.

The following class of objects in $Q_{n-1}^{inf}$ is easy to describe and are very useful in proving mirror symmetry for this category:

\begin{defn}Let $B$ be a connected subset of $\frH_{n-1}.$ A \emph{rank-one brane along $B$} is an object $\calF$ in $Q^{inf}_{n-1}$ such that the stalk $\phi_{F,\xi}(\calF)$ has rank-one cohomology for all facets $F$ in $B$, and all other stalks are zero. 
\end{defn}
\begin{figure}[h]
\includegraphics[width=10cm]{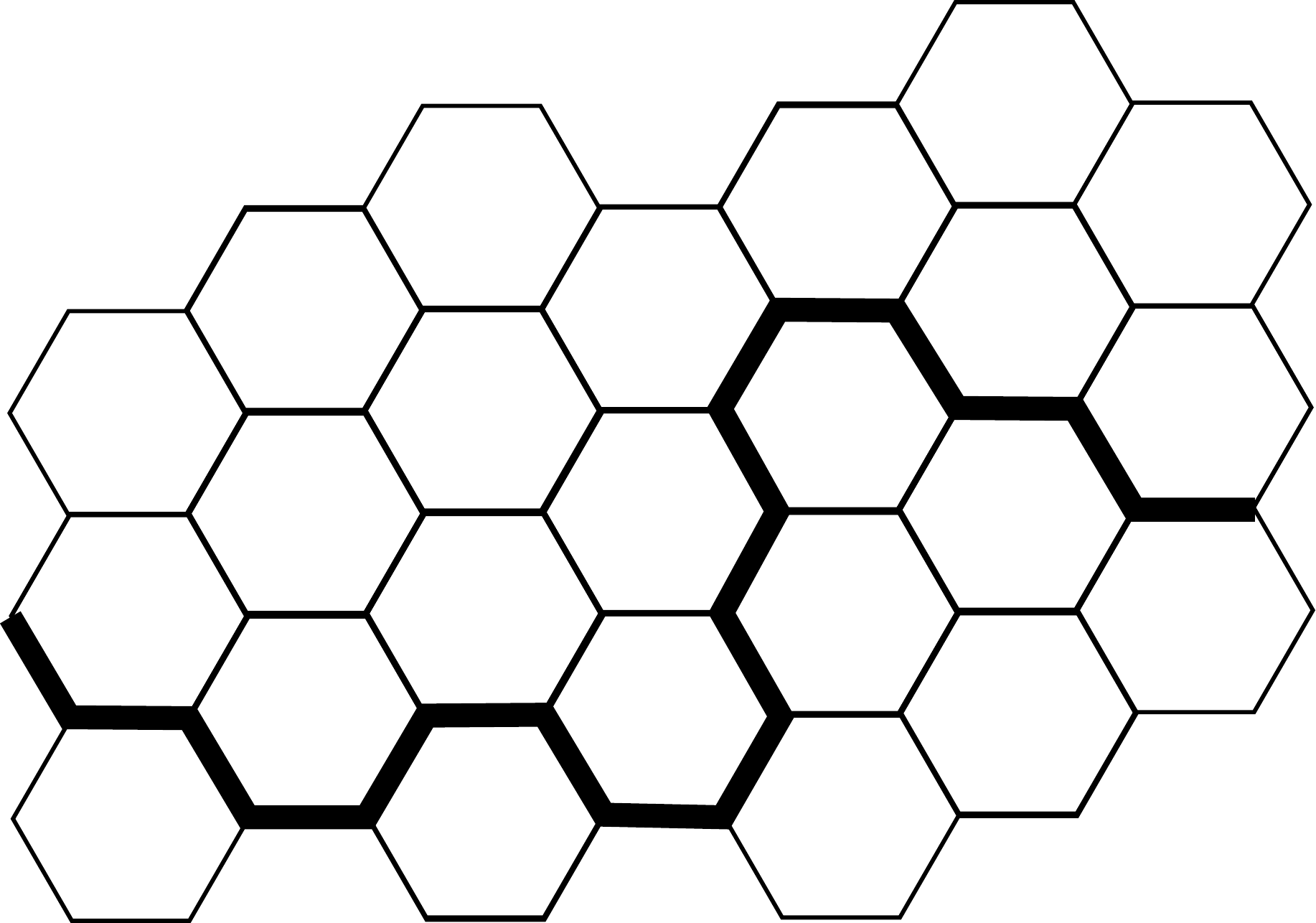}
\caption{Part of the honeycomb $\frH_2,$ with the support (indicated in bold) of a rank-one object in $Q^{inf}_2.$}
\end{figure}

The wrapped category $Q_{n-1}^{wr},$ which is defined as a colimit, is a little to harder to understand directly but admits a very nice set of generators. Note that the stalk functor $\phi_{F,\xi}:Q^{inf}_{n-1}\to(\Perf_k)_{\BZ/2},$ when extended to the cocomplete category $Q^{\Diamond}_{n-1},$ is corepresented by an object $\delta_{F,\xi},$ which we call the \emph{skyscraper} along the facet $F$.

\begin{lemma}
The category $Q^{wr}_{n-1}$ is generated by the set of skyscrapers along facets in $\frH_{n-1}.$
\end{lemma}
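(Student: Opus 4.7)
The plan is to exploit the description of $Q_{n-1}^{wr}$ as the idempotent completion of a colimit of the cosheaf $\calQ^{wr}_{n-1}$ over the stratification poset of $\frH_{n-1}$. For any diagram in $\BZ/2\on{-dgst}_k$ of compactly generated dg categories with colimit-preserving transition functors, the idempotent-completed colimit is compactly generated by the images of the compact generators of each local category. It therefore suffices to check that, for every stratum $S$ of $\frH_{n-1}$, some compact generating set of $\calQ^{wr}_{n-1}(S)$ is absorbed, under the canonical map into $Q^{wr}_{n-1}$, into the thick subcategory generated by skyscrapers along facets.

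For a codimension-zero facet $F$, the value $\calQ^{wr}_{n-1}(F) \simeq (\Perf_k)_{\BZ/2}$ is generated by its unit, which by the corepresentability definition of skyscrapers maps to $\delta_{F,\xi}$ in $Q^{wr}_{n-1}$; hence facet strata contribute exactly the desired set. For a codimension-$m$ stratum $S$ with $m \geq 1$, Proposition~\ref{prop:sings} together with its microlocal refinement Proposition~\ref{prop:miccat} identify $\calQ^{wr}_{n-1}(S) \simeq (A_m\on{-Perf}_k)_{\BZ/2}$. This category is compactly generated by its simple modules $\{L_i\}$, one for each vertex of the $A_m$ quiver, while the stratum $S$ lies in the closures of exactly the facets $\{F_i\}$ corresponding to the branches of the arboreal singularity. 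The cosheaf transition functor $\calQ^{wr}_{n-1}(S) \to \calQ^{wr}_{n-1}(F_i)$ is, by the arboreal calculation of \cite{Narb}, the vertex-projection functor sending $L_i$ to the unit of $\Perf_k$ and every other $L_j$ to zero. Consequently, in the colimit $Q^{wr}_{n-1}$, each $L_i$ is identified with the skyscraper $\delta_{F_i,\xi}$, so all compact generators arising from higher-codimension strata already lie in the thick subcategory generated by skyscrapers.

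The main technical obstacle is pinning down the cosheaf transition functors $\calQ^{wr}_{n-1}(S) \to \calQ^{wr}_{n-1}(F_i)$ as the vertex-projection functors on $A_m\on{-Perf}_k$; this is precisely the compatibility between the local model at an arboreal singularity (\cite{Narb}) and the smooth local model at a facet, interpreted within the cosheaf structure of $\calQ^{wr}_{n-1}$. Once this compatibility is in hand, the lemma follows formally from the colimit description of $Q^{wr}_{n-1}$ combined with the corepresentability of the stalk functors at facets by skyscrapers.
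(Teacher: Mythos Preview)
Your approach differs from the paper's and contains a genuine error in the handling of the cosheaf transition functors. The cosheaf $\calQ^{wr}_{n-1}$ is a functor $P(\frH_{n-1})^{\op}\to\BZ/2\on{-dgst}_k$, so its transition maps go from higher-dimensional strata to lower-dimensional ones: for a facet $F_i$ in the star of a codimension-$m$ stratum $S$, the diagram contains a functor $\calQ^{wr}_{n-1}(F_i)\to\calQ^{wr}_{n-1}(S)$, namely the \emph{left adjoint} $\calQ(i)^L$ to the Verdier quotient. What you describe---a functor $\calQ^{wr}_{n-1}(S)\to\calQ^{wr}_{n-1}(F_i)$ sending $L_i\mapsto k$ and killing the other simples---is the \emph{sheaf} restriction map $\calQ^{\trad}_{n-1}(S)\to\calQ^{\trad}_{n-1}(F_i)$, not a map in the cosheaf diagram at all. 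Consequently your identification of each simple $L_i$ with $\delta_{F_i,\xi}$ in the colimit is unjustified: to make the colimit argument work one would instead need to show that the images of the left adjoints $\calQ^{wr}_{n-1}(F_i)\hookrightarrow\calQ^{wr}_{n-1}(S)$, ranging over facets $F_i$ adjacent to $S$, jointly generate $(A_m\on{-Perf}_k)_{\BZ/2}$. This is true but requires identifying those left adjoints explicitly, which you have not done.

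The paper's argument avoids this entirely by passing to the dual limit description. Using the equivalence $Q^{wr}_{n-1}\simeq (Q^\Diamond_{n-1})^{cpt}$ (the Lurie trick), one observes that an object of $Q^\Diamond_{n-1}$ is zero iff all its stalks along facets vanish---an immediate consequence of the fact that an object of $(A_m\on{-Mod}_k)_{\BZ/2}$ vanishes iff its images under the quotient functors to $(\Mod_k)_{\BZ/2}$ all vanish. Since the skyscrapers corepresent these stalk functors, they generate $Q^\Diamond_{n-1}$ and hence its compact objects. This is a two-sentence proof; your strategy, even once corrected, would require nontrivial bookkeeping about semiorthogonal decompositions of $A_m\on{-Perf}_k$.
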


The skyscrapers, though defined abstractly, in practice have a simple description. Let $P$ be a permutohedron in $V_n,$ and let $F_i$ be the face shared by $P$ and $P+\lambda_i.$ Let $B$ be the boundary of the region obtained as the union of all positive translates of $P$ in the directions $\{\lambda_1,\ldots,\hat{\lambda}_i,\ldots,\lambda_{n+1}\}.$ See Figure~\ref{fig:sky} for an illustration of such a region $B.$

\begin{prop}[Proposition \ref{prop:skyscrapers} below]The rank-one brane along $B$ is the skyscraper along $F$.
\end{prop}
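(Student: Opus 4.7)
The plan is to identify $\mathcal{F}_B$ with $\delta_{F,\xi}$ in $Q^\Diamond_{n-1}$ by testing against the generating family of skyscrapers. Since the skyscrapers $\{\delta_{F',\xi'}\}$ generate $Q^{wr}_{n-1}$ by the preceding lemma, the associated stalk functors $\phi_{F',\xi'} = \Hom(\delta_{F',\xi'},-)$ form a jointly conservative family on $Q^\Diamond_{n-1}$. It therefore suffices to produce natural equivalences
$$
\phi_{F',\xi'}(\mathcal{F}_B) \simeq \phi_{F',\xi'}(\delta_{F,\xi})
$$
for every facet-codirection pair $(F',\xi')$.

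The left-hand side is immediate from the definition of a rank-one brane: $\phi_{F',\xi'}(\mathcal{F}_B)$ has rank-one cohomology, concentrated in a single $\BZ/2$-degree, exactly when $F' \subset B$ with the outward codirection, and vanishes otherwise. To compute the right-hand side, I would use the geometric characterization of $\delta_{F,\xi}$ as the corepresentative coming from positive Liouville wrapping of a small linking disk at $F$ with codirection $\xi$: its microlocal stalk at $(F',\xi')$ is nonzero precisely when $(F',\xi')$ lies on the boundary of the wrapped region. Direct inspection of the Liouville flow on $\tilcalP_{n-1}$, projected via $\widetilde{\Arg}$ to $V_n$, shows that for the appropriate choice of $\xi$, the positive wrap of a disk at $F = P \cap (P+\lambda_i)$ sweeps out precisely the region $R = \bigcup_{\alpha_j \geq 0,\, j \neq i}\!\bigl(P + \sum_{j \neq i}\alpha_j\lambda_j\bigr)$, whose boundary is $B$. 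Thus the stalks match facet by facet.

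The main obstacle is coherence: stalkwise agreement must assemble into a genuine $\BZ/2$-dg equivalence, so the gluing data of both objects at higher-codimension strata must also match. By the preceding propositions on the local $A_m$ structure of $\mathfrak{H}_{n-1}$, this reduces to checking that at each codim-$m$ face of $\mathfrak{H}_{n-1}$ meeting $B$, both $\mathcal{F}_B$ and $\delta_{F,\xi}$ correspond to the same object of $(A_m\text{-Perf}_k)_{\BZ/2}$. The convexity of $R$ makes this tractable: $B$ meets the link of each codim-$m$ face in a consecutive chain of $m$ rank-one facets, i.e., along a linear path through the $A_m$ singularity. This is exactly the combinatorial signature of the indecomposable projective module over the $A_m$ quiver corresponding to a skyscraper vertex. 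Verifying this chain-match at every singular stratum, together with the facet-by-facet stalk agreement above, completes the identification.
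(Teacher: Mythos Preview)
Your approach has a genuine circularity. The skyscraper $\delta_{F,\xi}$ is \emph{defined} abstractly as the corepresenting object of $\phi_{F,\xi}$; at this stage of the paper nothing else is known about it. To compare $\mathcal{F}_B$ and $\delta_{F,\xi}$ stalk by stalk, you must compute $\phi_{F',\xi'}(\delta_{F,\xi}) = \Hom(\delta_{F',\xi'},\delta_{F,\xi})$, which is precisely the kind of Hom you are trying to identify. You attempt to break the circle by invoking ``positive Liouville wrapping of a small linking disk'' and ``direct inspection of the Liouville flow,'' but no symplectic model is available here: Section~2 is purely combinatorial, and the skeleton $\tilLL_{n-1}$ is only constructed in Section~4. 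Even there, the paper does not carry out any explicit wrapping computation that would justify your claimed description of where the wrap of a linking disk lands.

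There is a second gap even granting the stalk computation. Matching local data at every stratum does not by itself produce an isomorphism in $Q^\Diamond_{n-1}$: you need an actual morphism $\mathcal{F}_B \to \delta_{F,\xi}$ (or the reverse) and then conservativity of the stalk functors to conclude it is an equivalence. Your sketch never constructs such a morphism. The paper's proof sidesteps both issues by working directly with the corepresentability characterization: it computes $\Hom_{Q^{inf}_{n-1}}(\mathcal{B}_\mathcal{P},\mathcal{G})$ as a limit over $P(\frH_{n-1})$, restricts to the support $P(B)$, identifies the resulting category with modules over an explicit acyclic quiver $Q_B$ with an initial vertex at $F_i$, and observes that $\mathcal{B}_\mathcal{P}$ becomes the constant representation, which corepresents evaluation at that initial vertex. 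This yields $\Hom(\mathcal{B}_\mathcal{P},\mathcal{G}) \cong \phi_{F_i,P}(\mathcal{G})$ naturally in $\mathcal{G}$, hence $\mathcal{B}_\mathcal{P} \cong \delta_{F_i,P}$ by Yoneda, with no need to know anything about $\delta_{F_i,P}$ in advance.
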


\subsection{Mirror symmetry}
The mirror to the pair of pants is the Landau-Ginzburg model $(\BA^{n+1},W_{n+1}=z_1\cdots z_{n+1}).$ Let $T^{n+1}$ be the $(n+1)$-torus $(\GG_m)^{n+1}$ and $\TT^n$ the kernel of the map $W_{n+1}:T^{n+1}\to\GG_m.$ Considering the skeleton $\frH_{n-1}$ for the universal abelian cover of the pair of pants instead of the skeleton $\frH_{n-1}/\Lambda_n$ for the pair of pants itself corresponds on the mirror to working equivariantly with respect to the torus $\TT^n$ (whose weight lattice is the lattice $\Lambda_n$). 
Thus, the expectation of homological mirror symmetry is that the wrapped Fukaya category $Q^{wr}_{n-1}$ ought to be equivalent to the torus-equivariant derived category of singularities
\begin{align*}D^b_{sing}(\BA^{n+1},W_{n+1})^{\TT^n}
&=\left(\Coh(W_{n+1}^{-1}(0))/\Perf(W_{n+1}^{-1}(0))\right)^{\TT^n}\\
&=\Coh(W_{n+1}^{-1}(0))^{\TT^n}/\Perf(W_{n+1}^{-1}(0))^{\TT^n},
\end{align*}
since passing to the quotient commutes in this case with taking $\TT^n$ equivariants.
 
There are a couple of ways to understand this category, coming from theorems of Orlov. A presentation of $D^b_{sing}$ which keeps manifest the $\Sigma_{n+1}$ symmetry induced from permutations of the coordinates of $\BA^{n+1}$ is as the category $\MF(\AA^{n+1},W_{n+1})$ of \emph{matrix factorizations} of $W_{n+1}$: the objects of this category are pairs $(V,d),$ for $V$ a $\BZ/2$-graded $k[z_1,\ldots,z_{n+1}]$-module and $d:V\to V$ an odd endomorphism such that $d^2$ is multiplication by $W_{n+1}=z_1\cdots z_nz_{n+1}.$

A less symmetric presentation is given as follows: for a choice of $a\in [n+1],$ let $W_{n+1}^a=W_{n+1}/z_a,$ and set $X^a=(W_{n+1}^a)^{-1}(0).$ Then we have an equivalence of categories
$$
\xymatrix{
D^b_{sing}(\BA^{n+1},W_{n+1})\ar[r]^-{\sim}& D^b(X^a)
}.
$$

This latter presentation is used in \cite{Nwms} to establish a mirror symmetry equivalence. It has the advantage of being built out of a simple inductive definition, but the disadvantage that it breaks the natural $\Sigma_{n+1}$ symmetry of $D^b_{sing}(\BA^{n+1},W_{n+1}).$ In this paper, we use the description as matrix factorizations, which allows us to write a mirror symmetry equivalence which is compatible with more symmetries. Note that the category $D^b_{sing}(\BA^{n+1},W_{n+1})^{\TT^n}$ has a natural action by the group $\tilW_n=\Lambda_n\rtimes\Sigma_{n+1},$ where $\Sigma_{n+1}$ acts by permuting the coordinates and the weight lattice $\Lambda_n$ of $\TT^n$ acts as twists by characters, and this action is manifest in the matrix factorization description. The group $\tilW_n$ also acts on $\frH_{n-1}$ in the obvious way, and hence also on $Q^{wr}_{n-1}.$ The main theorem of this paper is the expected mirror symmetry equivalence between the $A$-model of (the universal abelian cover of) the pair of pants and the (torus-equivariant) $B$-model of $(\BA^{n+1},W_{n+1})$:

\begin{theorem}[Theorem \ref{theorem:main} below]There is an equivalence of categories
$$
\xymatrix{
\Phi:\MF(\AA^{n+1},W_{n+1})^{\TT^n}\ar[r]^-\sim& Q^{wr}_{n-1}
}
$$
which is equivariant for the action of $\tilW_n$ on each side.
\end{theorem}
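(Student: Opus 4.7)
The strategy is to identify matching $\tilW_n$-equivariant compact generators on the two sides, define $\Phi$ on those generators, and verify it is fully faithful on the generating orbit; essential surjectivity then follows formally.

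On the A-side, the Lemma above provides such compact generators: the skyscrapers $\delta_{F,\xi}$ along facets of $\frH_{n-1}$. These lie in a single $\tilW_n$-orbit, with $\Lambda_n$-representatives given by the $n+1$ facets $F_a$ of a fixed fundamental permutohedron, each perpendicular to a direction $\lambda_a$, and $\Sigma_{n+1}$ permutes this labeling. On the B-side, the analogous equivariant compact generators are the Koszul-type matrix factorizations
\[
\calK_a = \bigl(z_a : R \to R,\ W_{n+1}/z_a : R \to R\bigr), \qquad R = k[z_1,\ldots,z_{n+1}],
\]
for $a = 1,\ldots,n+1$, together with all their $\TT^n$-character twists; $\Sigma_{n+1}$ permutes the index $a$ and $\Lambda_n$ acts by character twists, so these also form a single $\tilW_n$-orbit. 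I would define $\Phi$ on the generating orbit by $\calK_a \mapsto \delta_{F_a,\xi_a}$ for a compatible choice of codirections, and extend $\tilW_n$-equivariantly.

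The central step is then to match the $\tilW_n$-equivariant $A_\infty$-endomorphism algebras of these generating orbits. On the B-side this is a standard Koszul computation: the total Ext-algebra has underlying vector space a weight-graded monomial subspace of $R$ with explicit $\BZ/2$-grading, and the products are given by monomial multiplication modulo the curvature $W_{n+1}$. On the A-side I would use the cosheaf structure of $\calQ^{wr}_{n-1}$ together with the Proposition above, which realizes each $\delta_{F,\xi}$ as the rank-one brane along the boundary of a positive cone of permutohedra: morphisms between skyscrapers are then assembled from the local categories attached to the singular strata of $\frH_{n-1}$ traversed by the branes, each of which is equivalent to $A_m\on{-Perf}_k$ by the Proposition on singularities. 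The bijection between weight-graded monomials in $R$ and the combinatorics of codirected paths through the honeycomb (using the labeling of facets by directions $\lambda_a$) provides the matching of underlying graded spaces on the nose.

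Once $\Phi$ is fully faithful on the generating orbit, it extends uniquely to an equivalence of idempotent-completed triangulated categories, and $\tilW_n$-equivariance is manifest throughout. The hardest part will be matching the higher $A_\infty$-products, not merely the graded Ext-algebras. Here I would exploit the formality of each local piece $A_m\on{-Perf}_k$ together with the cosheaf gluing along the stratified honeycomb to reduce the higher structure constants to coherence conditions at codim-$3$ and higher faces of permutohedra --- effectively the ``octahedral axiom'' data highlighted in the discussion of $Q^{inf}_{n-1}$ above. Alternatively, one could take explicit Koszul resolutions of the $\calK_a$ and build a chain-level realization of the cosheaf $\calQ^{wr}_{n-1}$, so that $\Phi$ becomes a genuine dg functor whose full faithfulness reduces to a single chain-level computation.
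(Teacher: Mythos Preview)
Your identification of the generators on both sides (the $\ul\cO_n^a$ and the skyscrapers $\delta_{F_a}$) agrees with the paper's, and the overall strategy of matching generators is sound. However, the paper takes a substantially different route that sidesteps precisely the step you flag as hardest. Rather than matching $A_\infty$-structures directly, the paper first constructs an honest dg functor $\bar\Phi:\Coh(W_{n+1}^{-1}(0))^{\TT^n}\to Q^{inf}_{n-1}$ at the level of coherent sheaves, \emph{before} passing to the singularity category. The point is that $W_{n+1}^{-1}(0)=\colim_D\AA^I$ over the poset $D$ of proper subsets $I\subset[n+1]$, so a functor out of $\Coh(W_{n+1}^{-1}(0))^{\TT^n}\simeq\colim_D\Coh(\AA^I)^{\TT^n}$ is just a compatible family of functors out of each $\Coh(\AA^I)^{\TT^n}$. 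This diagram is strict, and a $\tilW_n$-equivariant functor out of $\Coh(\AA^I)^{\TT^n}$ is nothing more than an object $\calO_I$ with commuting maps $x_i:\calO_I\langle-\lambda_i\rangle\to\calO_I$; the compatibility across $D$ is the cone relation $\calO_{I\setminus\{i\}}\simeq\Cone(x_i)$. Taking $\calO_I$ to be the rank-one brane $\calB_{P,I}$, these relations are exactly the content of the lemma on the $\calB_{P,J}$ established just before the theorem. Once $\bar\Phi$ exists as a genuine functor, one checks that it kills $\Perf$ (so descends to $\MF$), lands in $Q^{wr}_{n-1}$ (since generators go to skyscrapers), and sends each generating morphism $f_{ij}$ to the unique nonzero $g_{ij}$ --- a one-line check because the target Hom-space is one-dimensional. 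All higher coherences come for free from having an actual functor in hand.

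Your approach, by contrast, leaves a real gap at the $A_\infty$-matching step. Your first suggestion --- exploiting formality of the local pieces $(A_m\on{-Perf}_k)_{\BZ/2}$ --- does not help: formality is not preserved by the colimit that assembles $Q^{wr}_{n-1}$, and indeed the endomorphism algebra of $\bigoplus_a\ul\cO_n^a$ in $\MF(\AA^{n+1},W_{n+1})$ is \emph{not} formal for $n\geq 2$ (the compositions $f_{jk}\circ f_{ij}$ vanish while the length-$(n+1)$ complex built from them is acyclic, forcing nontrivial Massey products). Your second suggestion --- building an explicit chain-level model and comparing directly --- is in principle possible but is exactly the difficulty the paper's colimit trick is designed to avoid. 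The moral is that it is far easier to construct the functor first, out of a category ($\Coh$ of a union of affine spaces) where no higher products need to be specified by hand, and then verify it is an equivalence on generators, than to compute both $A_\infty$-algebras and exhibit a quasi-isomorphism between them.
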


This equivalence sends the natural generators of the derived category of singularities, the structure sheaves $\cO_n^i$ of the coordinate hyperplanes $\{z_i=0\},$ to the skyscrapers $\delta_{F_i}$ along certain distinguished facets of a standard permutohedron $\frP$ inside $\frH_{n-1}.$ (Specifically, $F_i$ is the facet separating $\frP$ from $\frP+\lambda_i.$) The skyscrapers along other facets correspond to certain complexes formed out of the sheaves $\cO_n^i.$

The relation between the above correspondence and the combinatorics of the permutohedron can be seen more explicitly in the matrix factorization category. Let $[n=1]=\{1,\ldots,n+1\}$; for a nonempty proper subset $I\subset[n+1],$ write $z_I=\prod_{i\in I}z_i$ and $z_{I^c}=\prod_{i\notin I}z_i,$ and similarly set $\lambda_I=\sum_{i\in I}\lambda_i.$ Write $\ul\cO^i_n$ for the image of $\cO^i_n$ in the category $\MF(\BA^{n+1},W_{n+1}).$ Then under the mirror symmetry equivalence $\Phi,$ the skyscraper sheaf along the facet separating $\frP$ and $\frP+\lambda_i$ corresponds to the matrix factorization
$$\ul\cO^i_n=\left(\xymatrix{
k[z_1,\ldots,z_{n+1}]\ar[r]^{z_{\{i\}^c}}&k[z_1,\ldots,z_{n+1}]\ar[r]^{z_{\{i\}}}&k[z_1,\ldots,z_{n+1}]
}\right),$$
while more generally, a skyscraper along the facet separating $\frP$ and $\frP+\lambda_I$ corresponds to the matrix factorization
$$\xymatrix{
k[z_1,\ldots,z_{n+1}]\ar[r]^{z_{I^c}}&k[z_1,\ldots,z_{n+1}]\ar[r]^{z_I}&k[z_1,\ldots,z_{n+1}]
}.$$

\subsection{Acknowledgements}

We would like to thank Yasha Eliashberg, Ailsa Keating, Grisha Mikhalkin, Vivek Shende, Nick Sheridan, Laura Starkston, Alex Takeda, and Ilia Zharkov for helpful conversations. BG is grateful to the NSF for the support of a Graduate Research Fellowship,
and DN  for the support of grant DMS-1502178.

\subsection{Notation and conventions}
We fix an algebraically closed coefficient field $k$ of characteristic zero. Throughout this paper, we work with (usually pretriangulated) differential $\ZZ/2$-graded categories, which we refer to as $\BZ/2$-dg categories. Appropriate homotopical contexts for pretriangulated dg categories have been described in \cite{toen} (with an adaptation of this theory to the $\ZZ/2$-graded case available for instance in \cite{D}, Section 5.1) and \cite{Lur-topos,Lur-algebra} (as $Hk$-linear stable $(\infty,1)$-categories).

We will denote by $\ZZ/2\on{-dgst}_k$ the category of $k$-linear pretriangulated $\ZZ/2$-dg categories, in any of the equivalent homotopical contexts just mentioned. One object of $\ZZ/2\on{-dgst}_k$ we will use often is the $\BZ/2$-dg category $(\Perf_k)_{\BZ/2}$ of $\BZ/2$-dg $k$-modules with finite-dimensional cohomology.

When discussing polyhedral cell complexes in this paper, the word ``face" will mean a general face (of any dimension), while ``facet" will always refer to codimension 1 faces only. Facets of the permutohedron $\frP_n$ are all of the form $\frP_{k-1}\times\frP_{n-k}$ (where $\frP_0=\{pt\},$ and we will refer to facets of the form $\frP_{n-1}\times\frP_0=\frP_{n-1}$ as \emph{maximal facets} of $\frP_n.$

In the table below we collect for the reader's convenience some of the nonstandard or frequently used notations used in this paper, in order of appearance.

\begin{tabular}{l|l}
\hline
$T^{n+1}$ & $(n+1)$-dimensional complex torus\\
$W_{n+1}$ & The map $T^{n+1}\to \GG_m$ given by $(z_1,\ldots,z_{n+1})\mapsto \prod z_i$\\
$\TT^n$ & Ker$(W_{n+1})$\\
$\frP_n$ & $n$-permutohedron\\
$F_I$ & Face of $\frP_n$ corresponding to $I\subset[n+1]$\\
$\Lambda_n$ & Weight lattice of the torus $\TT^n$ \\
$\tilW_n$ & Affine Weyl group $\Lambda_n\rtimes \Sigma_{n+1}$\\
$Q^{inf}_{n-1}$ & Infinitesimally wrapped combinatorial $A$-model category\\
$Q^{wr}_{n-1}$ & Partially wrapped combinatorial $A$-model category\\
$\delta_{F,\xi}$ & Skyscraper along facet $F$ in normal direction $\xi$\\
$B_{P,J}$ & Rank-one brane along $\partial\{P+\sum_{j\in J} n_j\lambda_j\mid n_j\in\NN\}$\\
$\calO^i_n$ & Structure sheaf of $\{z_i=0\}$ in $\Coh(\Spec k[z_1,\ldots,z_{n+1}]/(z_1\ldots z_{n+1}))$\\
$\ul\cO^i_n$ & Image of $\cO^i_n$ under the quotient $\Coh\to\Coh/\Perf$\\
\hline
\end{tabular}


\section{Combinatorial $A$-model}


\subsection{Permutohedron}

Let $T^{n+1}=(\GG_m)^{n+1}$ be the $(n+1)$-dimensional torus, and $W_{n+1}:T^{n+1}\to \GG_m$ the character defined by $W_{n+1}(z_1,\ldots,z_{n+1})= z_1\cdots z_{n+1}.$ We will denote by $\TT^n$ the kernel of $W_{n+1},$ so that we have a short exact sequence of tori
$$
\xymatrix{
1 \ar[r] & \TT^n \ar@{^{(}->}[r] & T^{n+1} \ar@{->>}[r]^{W_{n+1}} & \GG_m\ar[r] & 1.
}
$$

Let $\chi^*(T^{n+1})= \Hom(T^{n+1}, \GG_m) \simeq \BZ^{n+1}$ denote the weight lattice of $T^{n+1}$, and  
let $\lambda_1,\ldots,\lambda_{n+1}$ denote its standard coordinate basis. The above short exact sequence of tori induces a short exact sequence of weight lattices, giving the presentation

$$
\xymatrix{
\chi^*(\TT^n)\simeq \BZ^{n+1}/\BZ\langle \sum_{a=1}^{n+1}\lambda_a\rangle 
}
$$

Throughout this paper, we  will set 
$$
\xymatrix{
\Lambda_n := \chi^*(\TT^n);
&
V_n:=\Lambda_n\otimes\RR.
}
$$ We will abuse notation and write $\lambda_1,\ldots,\lambda_{n+1}$ also for their images in $\Lambda_n$ and $V_n$.

The symmetric group $\Sigma_{n+1}$  naturally acts on $\BZ^{n+1}$ by permutations, and the action descends to 
the quotient $\Lambda_n$ and further to $V_n$. 

\begin{defn}
The  {\em $n$-permutohedron} $\frP_{n} \subset V_n$ is the convex hull
$$
\xymatrix{
\frP_n = \conv\{ \sigma\cdot \left(\frac{1}{n+1}\sum_{a=1}^{n+1}a\lambda_a\right) \in V_n \;|\; \sigma\in \Sigma_{n+1}\}.
}
$$
of the $\Sigma_{n+1}$-orbit of the point $\frac{1}{n+1}\sum_{a=1}^{n+1} a\lambda_a \in V_n$.
\end{defn}

\begin{remark} The above definition of the permutohedron presents it as a convex polytope in the $n$-dimensional quotient space $V_n$  of $\RR^{n+1}.$ This disagrees with the more typical definition as  the convex polytope 
$$
\xymatrix{
 \frP_n'=\conv\{ \sigma_{n+1}\cdot(1,\ldots,n+1)  \in (\BR^{n+1})^* \;|\; \sigma\in \Sigma_{n+1}\}
}
$$
 in the $n$-dimensional  affine subspace 
$$
\xymatrix{
\{(x_1, \ldots, x_{n+1}) \in (\RR^{n+1})^*\mid \sum_{a=1}^{n+1}  x_a = n(n+1)/2\}
}
$$ 

We have chosen our convention with mirror symmetry in mind; in particular,
we prefer a permutohedron which is translated in a natural way by the weight lattice $\Lambda_n = \chi^*(\TT^n)$ rather  than the coweight lattice of $\TT^n.$ 

Our definition agrees with the usual one, up to a duality: after translating $\frP_n'$ by $(\frac{-n}2,\ldots,\frac{-n}2),$ the identification of $\RR^{n+1}$ with its dual space coming from the standard basis of $\RR^{n+1}$ sends $\frP_n'$ to $\frP_n$.
\end{remark}

\begin{example} The 1-permutohedron $\frP_{1}$ is a line segment; the 2-permutohedron $\frP_{2}$ is a hexagon; the 
 3-permutohedron $\frP_{3}$ is a truncated octahedron, with faces consisting of 8 hexagons and 6 squares.  For $n\geq 3$, the permutohedron $\frP_n$ is not a regular polyhedron.
\end{example}

\begin{figure}[h]
\includegraphics[width=4cm]{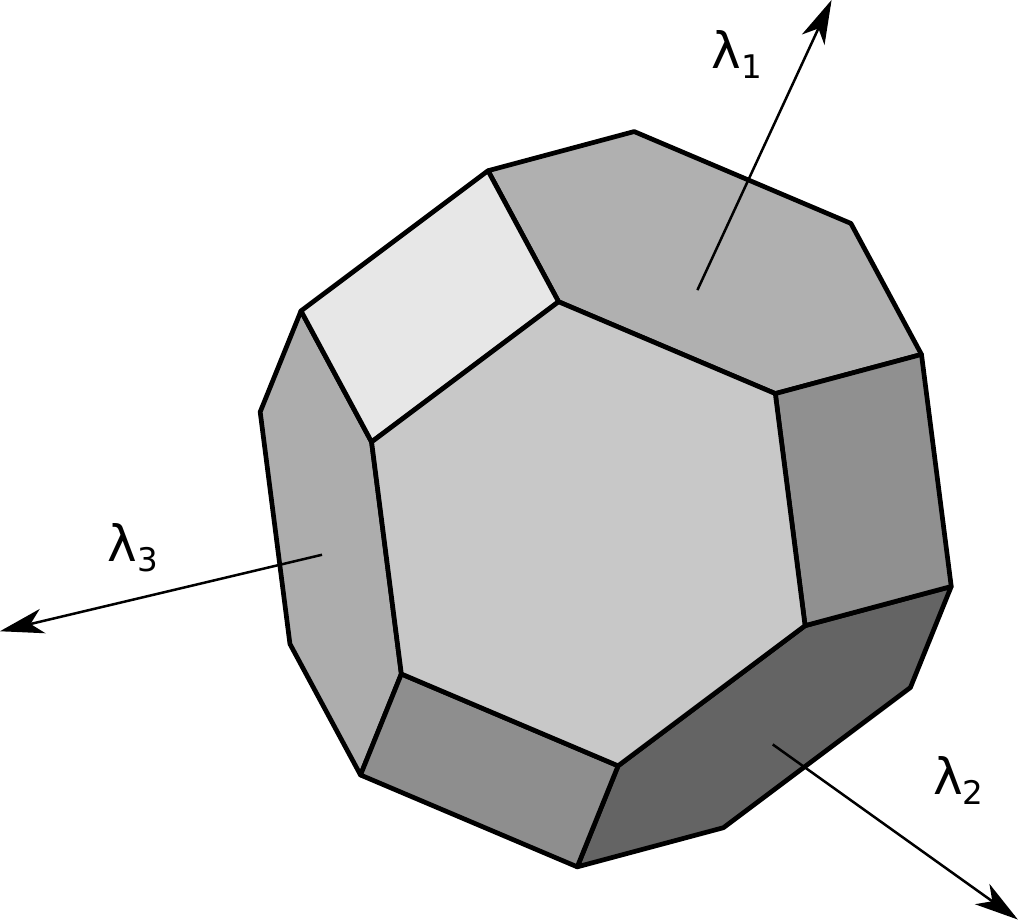}
\caption{The 3-permutohedron $\frP_3$ and three generators of $\Lambda_3$. The fourth generator points directly into the central hexagon.}
\end{figure}

By construction, the symmetric group $\Sigma_{n+1}$ acts transitively on the vertices of the permutohedron.
 To organize the combinatorics of this action, we will find it useful to record here some alternate descriptions and helpful facts about the permutohedron.

 
 \subsubsection{Cayley graph description}
 We first cite from \cite{GG} a description of the permutohedron as a Cayley graph of $\Sigma_{n+1}.$ Recall that the \emph{inversions} in the symmetric group $\Sigma_{n+1}$ are the transpositions of the form $(i \; i+1)$ for some $1\leq i<n+1$.

\begin{lemma}[{\cite[Theorem~1]{GG}}]
The 1-skeleton of the permutohedron $\frP_n$ is the Cayley graph of $\Sigma_{n+1}$ on the generators the inversions in $\Sigma_{n+1}$.
\end{lemma}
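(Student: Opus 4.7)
The plan is to identify the vertices of $\frP_n$ with $\Sigma_{n+1}$ via the evident simply transitive action, deduce that the $1$-skeleton is automatically a Cayley graph, and then pin down the generating set via a direct edge analysis based on the rearrangement inequality.

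First I observe that the coefficients $\{a/(n+1)\}_{a=1}^{n+1}$ of the base point $v_e:=\tfrac{1}{n+1}\sum_{a=1}^{n+1} a\lambda_a$ are all distinct, so the $\Sigma_{n+1}$-stabilizer of $v_e$ (acting by permutation of the basis $\{\lambda_a\}$ on $V_n$) is trivial. Hence $\sigma\mapsto v_\sigma:=\sigma\cdot v_e$ is a bijection between $\Sigma_{n+1}$ and the vertex set of $\frP_n$, and since $\Sigma_{n+1}$ acts by polytope automorphisms, the $1$-skeleton corresponds under this bijection to a Cayley graph on $\Sigma_{n+1}$ with respect to the inversion-closed generating set
$$
S=\{\sigma\in \Sigma_{n+1}\setminus\{e\}\mid v_\sigma \text{ is joined to } v_e \text{ by an edge of }\frP_n\}.
$$
It then suffices to show $S=\{s_i:=(i,i+1)\mid 1\leq i\leq n\}$.

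For this identification, I use the standard fact that $\{v_e,v_\sigma\}$ spans an edge of the convex polytope $\frP_n$ iff there exists a linear functional $\ell:V_n\to\RR$ whose set of maximizers on $\frP_n$ equals $\{v_e,v_\sigma\}$. Lifting $\ell$ to $\sum_b \beta_b \lambda_b^*$ on $\RR^{n+1}$ (normalized so that $\sum_b \beta_b=0$) and using $v_\sigma=\tfrac{1}{n+1}\sum_b \sigma^{-1}(b)\lambda_b$, one computes
$$
\ell(v_\sigma)=\tfrac{1}{n+1}\sum_b \beta_b\,\sigma^{-1}(b).
$$
By the rearrangement inequality, this quantity is maximized over $\Sigma_{n+1}$ precisely when the sequence $\sigma^{-1}(b)$ is sorted in the same order as the sequence $\beta_b$. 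For $\sigma=e$ to be a maximizer one needs $\beta_1\leq\cdots\leq\beta_{n+1}$, and a standard count shows the maximizer set has cardinality $\prod_j m_j!$ where the $m_j$ are the multiplicities of the distinct values of $\beta_b$ (necessarily occurring in consecutive blocks). For this cardinality to equal $2$ one needs a single coincidence of the form $\beta_i=\beta_{i+1}$, in which case the other maximizer is necessarily $\sigma=s_i$. This simultaneously yields $S\subseteq\{s_1,\ldots,s_n\}$, and the reverse inclusion follows by choosing such a $\beta$-sequence for each $i$.

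The only real obstacle is the bookkeeping translating the two-maximizer condition into the constraint $\beta_i=\beta_{i+1}$ with all other $\beta_j$ distinct; once this combinatorial extraction is in hand, the identification of $S$ with the adjacent transpositions and hence of the $1$-skeleton with the desired Cayley graph is immediate.
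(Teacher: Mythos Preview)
Your argument is correct. Note, however, that the paper does not supply its own proof of this lemma: it is stated as a citation of \cite[Theorem~1]{GG} and used as a black box. The proof you have written is essentially the original Gaiha--Gupta argument (simply transitive action to get a Cayley graph structure, then the rearrangement inequality to pin down the edge set as the adjacent transpositions), so there is nothing to compare against beyond observing that you have reproduced the cited source rather than diverged from it.
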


This description depends in particular on the choice of a vertex of the permutohedron to correspond to the identity  of $\Sigma_{n+1}$; this vertex will subsequently be denoted $(1)$. Since $\Sigma_{n+1}$ acts transitively on the vertices of the permutohedron, a description of the permutohedron near any vertex is sufficient to understand its global structure.

%
From the Cayley graph perspective, we see that $k$-faces incident to a given vertex of $\frP_n$ correspond precisely to those subgroups of $\Sigma_{n+1}$ generated by $k$ inversions. Call the inversions $(i\; i+1)$ and $(i+1\; i+2)$ \emph{adjacent}. Then if $I\subset \{(1\; 2),(2\; 3),\ldots, (n\; n+1)\}$ is some subset of inversions and we decompose $I=\coprod_i I_i$ into its maximal subsets of adjacent inversions and set $n_i=\#I_i,$ then the $(\sum_{i}n_i)$-dimensional face of $\frP_n$ corresponding to $I$ is of the form $\prod_i \frP_{n_i}.$ In other words:

\begin{cor}
The faces (in every dimension) of the permutohedron $\frP_n$ are products of lower-dimensional permutohedra.
\end{cor}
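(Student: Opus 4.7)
The plan is to leverage the Cayley-graph description from the preceding lemma. By the transitivity of the $\Sigma_{n+1}$-action on vertices of $\frP_n$, every face is $\Sigma_{n+1}$-equivalent to one containing the ``identity vertex'' $v_0 = \frac{1}{n+1}\sum_{a=1}^{n+1} a\lambda_a$, so it suffices to describe the faces incident to $v_0$. The preceding lemma identifies $k$-faces through $v_0$ with the rank-$k$ ``parabolic'' subgroups $\langle I \rangle \subseteq \Sigma_{n+1}$, where $I$ ranges over $k$-subsets of the simple inversions $\{s_1,\ldots,s_n\}$ with $s_i = (i \; i+1)$; concretely, such a face is the convex hull of the $\langle I\rangle$-orbit of $v_0$.

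Next I would decompose a given $I$ into its maximal runs $I = \coprod_j I_j$ of consecutively-indexed simple inversions, with $n_j := \#I_j$. The standard Coxeter-theoretic observation is that $\langle I \rangle$ splits as the direct product $\prod_j \langle I_j\rangle$, and each factor $\langle I_j\rangle$, being generated by $n_j$ adjacent transpositions on a disjoint block of $n_j+1$ consecutive coordinates of $\RR^{n+1}$, is isomorphic to $\Sigma_{n_j+1}$. The commuting actions of these factors on disjoint coordinate blocks imply that the $\langle I\rangle$-orbit of $v_0$ is the Cartesian product of the individual $\langle I_j\rangle$-orbits, each living in its own affine subspace of $V_n$.

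The main step is then to identify the convex hull of each block-orbit with the permutohedron $\frP_{n_j}$. This is essentially automatic from the definition: the $\Sigma_{n_j+1}$-orbit of a point of the form $\frac{1}{n_j+1}\sum_{a=1}^{n_j+1} a \lambda_a$ in the appropriate $n_j$-dimensional quotient space is exactly $\frP_{n_j}$. The only thing to check is normalization: that the projection of $v_0$ to each block coincides, up to a $\Sigma_{n_j+1}$-invariant affine translation (which does not affect the orbit), with this baricentric vector. I expect this normalization check to be the main obstacle, but it is a short direct computation from the explicit formula for $v_0$ and the way $\langle I_j\rangle$ permutes the $n_j+1$ coordinates indexed by its block. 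Having verified this, the face corresponding to $I$ is identified with $\prod_j \frP_{n_j}$, which proves the corollary.
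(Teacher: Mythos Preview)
Your proposal is correct and follows essentially the same approach as the paper: the paper's argument (given in the paragraph immediately preceding the corollary rather than in a separate proof environment) likewise uses the Cayley-graph lemma to identify $k$-faces at a vertex with subgroups generated by $k$ inversions, decomposes $I$ into maximal adjacent runs $I_j$ with $n_j = \#I_j$, and asserts that the resulting face is $\prod_j \frP_{n_j}$. Your write-up is simply a more detailed version of this sketch, making explicit the parabolic decomposition $\langle I\rangle \cong \prod_j \Sigma_{n_j+1}$ and the normalization check identifying each block-orbit with $\frP_{n_j}$, points the paper leaves implicit.
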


We can analyze the description above in more detail to attain information about $k$-faces of $\frP_n$ for all $k$; in particular we will be interested in the facets. A facet incident on the vertex $(1)$ is determined by a choice of $n-1$ inversions, and hence the set of such facets is $\{\frP_{n-k}\times \frP_{k-1}\}_{k=1,\ldots,n}.$ The polyhedron $\frP_{n-k}\times \frP_{k-1}$ has $k!(n-k+1)!$ vertices, so the total number of such facets in $\frP_n$ is $\frac{(n+1)!}{k!(n-k+1)!}=\binom{n+1}{k}.$ Adding all of these up, we find that the total number of facets is
$\sum_{k=1}^n\binom{n+1}{k}=2^{n+1}-2.$

Thus the facets of $\frP_n$ are in bijection with proper, nonempty subsets of $\{1,\ldots,n\}.$ In order to transfer the above analysis to the coordinate description of $\frP_n$, it will be helpful to write this bijection in an explicit way.
\begin{lemma}
	Let $S\subset \{1,\ldots,n+1\}$ be a proper, nonempty subset. Define a subset $F_S$ of the vertices of $\frP_n$ by declaring that the vertex $\sigma\cdot (\frac{1}{n+1}\sum_{a=1}^{n+1}a\lambda_a)$ is in $F_S$ if and only if $\sigma(i)<\sigma(j)$ for all pairs $(i\in S,j\notin S).$ Then the map $S\mapsto F_S$ is a bijection between proper nonempty subsets of $\{1,\ldots,n+1\}$ and facets of $\frP_n.$
\end{lemma}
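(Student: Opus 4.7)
The plan is to exhibit $F_S$ as the vertex set of the face of $\frP_n$ on which a natural linear functional attains its minimum, then to check this face is a facet and that distinct $S$ produce distinct facets.

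First I would translate the condition $\sigma(i)<\sigma(j)$ for $i\in S$ and $j\notin S$ into coordinate data. Under the convention in which the vertex $\sigma\cdot v_0$ has its coordinates read off from $\sigma$, this condition says exactly that the coordinates of $\sigma\cdot v_0$ indexed by $S$ are the $|S|$ smallest among all $n+1$ coordinates and take the values $\{1/(n+1),\ldots,|S|/(n+1)\}$ in some order. Equivalently, such vertices are precisely those at which the linear functional $\ell_S(v)=\sum_{i\in S}v_i$ (well-defined on $V_n$ after fixing a representative, e.g.\ the one with $\sum_b v_b=(n+2)/2$) attains its minimum $\frac{|S|(|S|+1)}{2(n+1)}$ over the vertices of $\frP_n$. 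By convexity, the same minimum is attained on the whole face $\frP_n\cap\{\ell_S=\min\}$, so $F_S$ is the vertex set of this face.

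Next I would verify that this face has codimension one. A vertex in $F_S$ is determined by independently specifying a bijection $S\leftrightarrow\{1,\ldots,|S|\}$ and a bijection $S^c\leftrightarrow\{|S|+1,\ldots,n+1\}$, so the face is combinatorially (and indeed polytopally) isomorphic to $\frP_{|S|-1}\times\frP_{n-|S|}$ of dimension $(|S|-1)+(n-|S|)=n-1$. Thus $F_S$ is a facet, and this identification also matches the earlier description of facets as cosets of the Young subgroup $\Sigma_{|S|}\times\Sigma_{n+1-|S|}$ in $\Sigma_{n+1}$.

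For bijectivity, I would observe that $S$ can be recovered from the facet $F_S$ --- for instance, as the unique set of coordinate indices along which every vertex of $F_S$ takes the $|S|$ smallest coordinate values (with $|S|$ read off from the common value of $\ell_S$ along $F_S$) --- which gives injectivity. Surjectivity then follows by the cardinality match established just above: the number of proper nonempty subsets indexing facets agrees with the total facet count of $\frP_n$. The only real obstacle is keeping the conventions straight: whether the vertex coordinates of $\sigma\cdot v_0$ are indexed by $\sigma$ or by $\sigma^{-1}$ determines which linear functional $\ell_S$ vs.\ $\ell_{\sigma(S)}$ is the appropriate one to minimize, but with either convention the argument reduces to the same rearrangement-style minimization.
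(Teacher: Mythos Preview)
Your proof is correct and takes a genuinely different route from the paper's. The paper verifies the lemma first for the ``initial segment'' subsets $S=\{1,\dots,i\}$ by identifying the corresponding facet at the base vertex $v_0$ with the orbit of $v_0$ under the Young subgroup $\Sigma_i\times\Sigma_{n+1-i}$ (using the Cayley graph description developed just before), and then transports to arbitrary $S$ via the $\Sigma_{n+1}$-action on both sides of the correspondence. Your argument instead is purely polytopal: you realise $F_S$ as the set of minimisers of the linear functional $\ell_S$, read off the product structure $\frP_{|S|-1}\times\frP_{n-|S|}$ to get the codimension, and conclude by the facet count already recorded. The paper's approach makes the link to the Cayley graph picture explicit and keeps the $\Sigma_{n+1}$-equivariance front and centre; yours is more self-contained, produces the supporting hyperplane directly, and does not need the equivariance step at all. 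One caution: your worry about conventions is well placed. With the paper's action $\sigma\cdot\lambda_a=\lambda_{\sigma(a)}$, the $c$-th coordinate of $\sigma\cdot v_0$ is $\sigma^{-1}(c)/(n+1)$, so the face on which $\ell_S$ is minimised is $\{\sigma:\sigma^{-1}(S)=\{1,\dots,|S|\}\}$ rather than $\{\sigma:\sigma(S)=\{1,\dots,|S|\}\}$; the map $S\mapsto F_S$ is still a bijection onto facets either way, but you should nail down which functional actually cuts out $F_S$ as literally defined rather than leave it as a remark.
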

\begin{proof}
Start by analyzing the facets incident on the vertex $v=\sum_{a=1}^{n+1}a\lambda_a,$ which we can treat as the vertex $(1)$ in the Cayley graph. We already have an explicit description for these facets: they correspond to $(n-1)$-element subsets $R\subset \{(1\;2),\ldots,(n\;n+1)\}.$ We claim that the facet corresponding to the subset $R$ which is missing $(i\;i+1)$ has vertex set $F_S,$ for $S=\{1,\ldots,i\}.$ Indeed, the facet corresponding to $R$ is the orbit of $v$ under $\Sigma_{i}\times\Sigma_{n-i+1}\subset\Sigma_{n+1},$ which is precisely $F_S.$

To extend this result to all the facets of the permutohedron, we note that the action of $\Sigma_{n+1}$ on $\{1,\ldots,n+1\}$ induces an action on the set of all proper, nonempty subsets of this set. Similarly, the action of $\Sigma_{n+1}$ on the set of vertices of $\frP_n$ induces an action on the set of facets, and the correspondence $S\mapsto F_S$ is equivariant for these actions (since ultimately both are induced in the same way from the permutation representation of $\Sigma_{n+1}$). Thus, the check we performed at a single vertex is sufficient to prove that $S\mapsto F_S$ is a bijection on the set of all facets of $\frP_n$. 
\end{proof}

\begin{defn} The facets of $\frP_n$ which are of the form $\frP_{n-1}$ will be called the \emph{maximal} facets of the permutohedron $\frP_n.$ Under the above bijection, they correspond to subsets $I\subset \{1,\ldots,n+1\}$ which have either 1 or $n$ elements.
\end{defn}

\subsubsection{Minkowski sum description}
 The other useful description which $\frP_n$ admits is as a Minkowski sum of line segments. Recall that the \emph{Minkowski sum} of two subsets $A$ and $B$ of $\RR^n$ is 
$$
\xymatrix{
A+B:=\{a+b\mid a\in A,b\in B\}.
}
$$
Let $\lambda_1,\ldots,\lambda_{n+1}$ be the standard basis vectors of $\RR^{n+1}.$ Then we have the following description of $\frP_n$:
\begin{lemma}
The $n$-permutohedron $\frP_n$ can be represented as the Minkowski sum
$$
\xymatrix{
\sum_{1\leq i<j\leq n+1}[\frac{\lambda_i-\lambda_j}{2(n+1)},\frac{\lambda_j-\lambda_i}{2(n+1)}],
}
$$
where $[a,b]$ denotes the set $\{ta+(1-t)b\mid 0\leq t\leq 1\},$ for $a,b\in V_n.$
\end{lemma}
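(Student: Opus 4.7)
The plan is to exploit the fact that both $\frP_n$ and the Minkowski sum
\[
Z := \sum_{1\leq i<j\leq n+1}\left[\tfrac{\lambda_i-\lambda_j}{2(n+1)},\tfrac{\lambda_j-\lambda_i}{2(n+1)}\right]
\]
are convex polytopes in $V_n$, so it suffices to show they have the same vertex set. The Minkowski sum $Z$ is a zonotope built from the symmetric segments with directions $\lambda_j - \lambda_i$, and for such a zonotope the vertex obtained by maximizing a generic linear functional $\ell$ is found segment-by-segment by choosing the endpoint of maximal $\ell$-value. This is the technical core of the argument and what I would set up first.

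More concretely, given a generic $\ell \in V_n^*$ (so that $\ell(\lambda_1),\ldots,\ell(\lambda_{n+1})$ are distinct), pick the unique permutation $\pi \in \Sigma_{n+1}$ such that $\pi(a)$ is the rank of $\ell(\lambda_a)$ among $\{\ell(\lambda_1),\ldots,\ell(\lambda_{n+1})\}$. For each pair $i<j$ the endpoint of the corresponding segment that maximizes $\ell$ is $\frac{\lambda_j - \lambda_i}{2(n+1)}$ when $\pi(j) > \pi(i)$ and $\frac{\lambda_i - \lambda_j}{2(n+1)}$ otherwise. Summing these contributions and collecting coefficients of each $\lambda_a$ yields
\[
v_\ell \;=\; \tfrac{1}{2(n+1)}\sum_{a=1}^{n+1}\bigl(2\pi(a)-n-2\bigr)\lambda_a,
\]
since exactly $\pi(a)-1$ of the indices $b$ satisfy $\pi(b)<\pi(a)$ and $n+1-\pi(a)$ of them satisfy $\pi(b)>\pi(a)$.

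Now I would use the quotient $V_n = \RR^{n+1}/\RR\langle\sum_a\lambda_a\rangle$ to kill the constant term $\frac{n+2}{2(n+1)}\sum_a\lambda_a$, giving
\[
v_\ell \;=\; \tfrac{1}{n+1}\sum_{a=1}^{n+1}\pi(a)\,\lambda_a \;=\; \pi^{-1}\cdot\left(\tfrac{1}{n+1}\sum_{a=1}^{n+1}a\,\lambda_a\right) \quad \text{in } V_n,
\]
which is precisely a vertex of $\frP_n$. As $\ell$ varies over generic linear functionals, the rank-permutation $\pi$ varies over all of $\Sigma_{n+1}$, so every vertex of $\frP_n$ arises in this way. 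Hence the vertex sets of $Z$ and $\frP_n$ coincide, and the two polytopes are equal.

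The main obstacle is purely bookkeeping: making sure the sign convention on the segments, the convention for the $\Sigma_{n+1}$-action on $V_n$, and the passage to the quotient $V_n$ all line up so that $v_\ell$ lands on the expected vertex of $\frP_n$ without an extraneous translation. Once the identification modulo $\sum_a\lambda_a$ is handled, no further input is needed beyond the standard fact that a convex polytope equals the convex hull of its vertices.
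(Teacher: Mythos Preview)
Your argument is correct. The paper takes a different and much terser route: it asserts that the $\Sigma_{n+1}$-orbit defining $\frP_n$ coincides with the Minkowski sum of the two-element endpoint sets $\bigl\{\pm\tfrac{\lambda_i-\lambda_j}{2(n+1)}\bigr\}$, and then invokes the general identity $\conv(A+B)=\conv(A)+\conv(B)$ to pass from endpoints to segments. Your approach instead characterizes the vertices of the zonotope $Z$ directly by optimizing a generic linear functional segment-by-segment, and then matches the result with the $\Sigma_{n+1}$-orbit via the explicit rank computation. What your method buys is that it is fully self-contained and makes the bijection between zonotope vertices and permutations transparent; it also sidesteps a small imprecision in the paper's one-line argument, since the Minkowski sum of the endpoint sets is in fact strictly larger than the vertex set of $\frP_n$ (for $n=2$ it also contains the center of the hexagon), so some additional reasoning of the kind you supply is needed anyway to conclude that the two convex hulls agree.
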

\begin{proof}
By construction, the set of vertices of the permutohedron is contained in the Minkowski sum
$$\sum_{1\leq i<j\leq n+1}\left\{\frac{\lambda_i-\lambda_j}{2(n+1)},\frac{\lambda_j-\lambda_i}{2(n+1)}\right\}$$
of two-element sets, and it is a general fact that the convex hull $conv(A+B)$ of a Minkowski sum $A+B$ is equal to the Minkowski sum $conv(A)+conv(B).$
\end{proof}

A Minkowski sum of line segments is also known as a \emph{zonotope}. As a cube is also a Minkowski sum of line segments, a zonotope can also be understood as the projection of a cube under an affine transformation. Hence, for instance, $\frP_n$ is an affine projection of the $\binom{n+1}{2}$-dimensional cube. Zonotopes have many nice properties, and the zonotopal perspective is often helpful for inductively describing the geometry of $\frP_n$; many of the combinatorial arguments which we made above could have proceeded in the language of zonotopes.

\subsubsection{Voronoi cell description}

In fact, $\frP_n$ is a special kind of zonotope. Recall that every rank $n$ lattice $\Lambda\subset \RR^n$ has an associated \emph{Voronoi tiling}, a tiling of $\RR^n$ symmetric under translation by $\Lambda$: for every lattice point $x\in \Lambda,$ there is a Voronoi cell $R_x$ centered at $x,$ where we define
$$
\xymatrix{
R_x:=\{y\in\RR^n\mid |y-x|<|y-x'|\text{ for all }x\neq x'\in \Lambda\}.
}
$$

The vector space $V_n$ is a quotient of $\RR^{n+1}$ and hence a subspace of its dual $(\RR^{n+1})^*,$ from which it inherits the standard metric (coming from the dual basis to $\lambda_1,\ldots,\lambda_{n+1}$). With respect to this metric, permutohedra are Voronoi cells:

\begin{lemma}[\cite{CS}]
The $n$-permutohedron $\frP_n$ is a Voronoi cell for the rank $n$ lattice $\Lambda_n\subset V_n.$
\end{lemma}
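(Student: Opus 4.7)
The plan is to verify directly that $\frP_n$ satisfies the defining condition
\[
R_0 = \{x \in V_n : \|x\| \leq \|x - v\| \text{ for all } v \in \Lambda_n\}
= \{x \in V_n : \langle x, v\rangle \leq \tfrac{1}{2}\|v\|^2 \text{ for all } v \in \Lambda_n\}
\]
of the Voronoi cell of $\Lambda_n$ centered at $0$, using the inner product inherited from the identification of $V_n$ with the hyperplane $H = \{y \in \RR^{n+1} : \sum y_i = 0\}$ via the orthogonal section of the quotient $\RR^{n+1} \twoheadrightarrow V_n$. Under this identification, the image $\bar\lambda_i \in V_n$ of $\lambda_i$ becomes $e_i - \tfrac{1}{n+1}\mathbf{1}$, and the generating vertex $\frac{1}{n+1}\sum_a a\lambda_a$ of $\frP_n$ becomes the vector whose $i$-th coordinate is $\tfrac{2i-(n+2)}{2(n+1)}$.

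First I would produce the inclusion $\frP_n \subseteq R_0$ by an explicit facet-by-facet check. For each nonempty proper subset $I \subsetneq \{1,\ldots,n+1\}$, set $\bar\lambda_I := \sum_{i\in I}\bar\lambda_i \in \Lambda_n$. A direct computation gives $\|\bar\lambda_I\|^2 = \tfrac{|I|(n+1-|I|)}{n+1}$. Since $\bar\lambda_I$ is orthogonal to $\mathbf{1}$, the pairing $\langle \sigma\cdot v_0, \bar\lambda_I\rangle$ reduces to $\sum_{i\in I}(\sigma\cdot v_0)_i$, and maximizing this over $\sigma \in \Sigma_{n+1}$ by pushing the largest coordinates into $I$ yields the value $\tfrac{|I|(n+1-|I|)}{2(n+1)} = \tfrac{1}{2}\|\bar\lambda_I\|^2$. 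By convexity this bound extends from vertices to all of $\frP_n$, so every point of $\frP_n$ lies on the correct side of the perpendicular bisector of $0$ and $\bar\lambda_I$, and the corresponding facet of $\frP_n$ (identified with $F_I$ in the previous lemma) lies on that bisector.

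Second, I would establish the reverse inclusion $R_0 \subseteq \frP_n$. One clean route is to combine the above facet identification with the tiling property: the translates of $\frP_n$ by $\Lambda_n$ cover $V_n$ with interiors disjoint (this is the zonotopal honeycomb property already invoked in the paper's construction of $\frH_{n-1}$), so any point strictly outside $\frP_n$ is strictly closer to some other lattice point, forcing $R_0 \subseteq \frP_n$. Alternatively, one may directly quote the classical identification of $\Lambda_n$ with the dual root lattice $A_n^*$, whose Voronoi-relevant vectors are exactly the partial sums $\bar\lambda_I$ (see \cite{CS}); then the two convex bodies $\frP_n$ and $R_0$ are both cut out by the same family of bisecting hyperplanes.

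The main obstacle is really just choosing how much of the classical $A_n^*$ lattice theory to invoke: the inequality $\frP_n \subseteq R_0$ is a transparent orbit computation, whereas the matching equality requires either an appeal to the standard Voronoi-relevance statement for $A_n^*$ or, alternatively, a self-contained verification of the tiling property of $\frP_n$ — which, since the Minkowski-sum description already presents $\frP_n$ as a zonotope generated by segments along the root vectors $\bar\lambda_i - \bar\lambda_j$, reduces to a direct combinatorial check.
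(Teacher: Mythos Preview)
The paper does not prove this lemma at all; it simply cites \cite{CS}. So there is no proof to compare against, and your sketch supplies substantially more detail than the paper does.

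That said, your two inclusions are labelled the wrong way round, and as a result one direction is never actually established. What your first paragraph shows is that each facet hyperplane of $\frP_n$ coincides with the perpendicular bisector of $0$ and $\bar\lambda_I$; since a polytope is the intersection of its facet half-spaces, this yields
\[
\frP_n \;=\; \bigcap_{\emptyset \subsetneq I \subsetneq [n+1]} H_{\bar\lambda_I}^{-}
\;\supseteq\; \bigcap_{v \in \Lambda_n \setminus \{0\}} H_v^{-} \;=\; R_0,
\]
i.e.\ the inclusion $R_0 \subseteq \frP_n$, \emph{not} $\frP_n \subseteq R_0$. Verifying the bisector inequality only for the finitely many vectors $\bar\lambda_I$ cannot place $\frP_n$ inside $R_0$, because $R_0$ is cut out by bisectors for \emph{all} nonzero lattice vectors. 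Your second paragraph then re-derives $R_0 \subseteq \frP_n$ (the step ``$x \notin \frP_n \Rightarrow x$ is closer to some $\bar\lambda_I$'' already follows from the facet identification and does not use the tiling), so $\frP_n \subseteq R_0$ is left unproved.

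The repair is easy once the ingredients are reassembled correctly: given that $\Lambda_n$-translates of $\frP_n$ tile $V_n$, both $\frP_n$ and $R_0$ have volume equal to the covolume of $\Lambda_n$, and the inclusion $R_0 \subseteq \frP_n$ of full-dimensional convex bodies forces equality. Be aware, though, that in the paper's logical order the tiling is stated as a \emph{corollary} of this very lemma, so appealing to it here is circular within the text; you would need to verify the tiling independently (your zonotope remark points in the right direction), or else fall back on \cite{CS} for the classification of Voronoi-relevant vectors of $A_n^*$, which is precisely what the paper does.
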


Let $\tilW_n$ denote the semidirect product $\Lambda_n \rtimes \Sigma_{n+1}$. Then we have the following corollary:

\begin{cor} 
$\Lambda_n$-translates of $\frP_n$ provide a tessellation of $V_n$
preserved by the natural $\tilW_n$-action.
\end{cor}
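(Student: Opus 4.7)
The plan is to derive the corollary directly from the preceding lemma identifying $\frP_n$ as a Voronoi cell of $\Lambda_n \subset V_n$, and then to check equivariance one factor at a time. The tessellation claim is an instance of the standard fact that for any full-rank lattice $\Lambda$ in a Euclidean space $V$, the $\Lambda$-translates of any one of its Voronoi cells tile $V$: every point of $V$ has a nearest lattice point and hence lies in some closed Voronoi cell, while two distinct open Voronoi cells are disjoint by the strict nearness inequalities in their definition. Applying this to $\Lambda_n$ and $\frP_n$ gives the tessellation.

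For $\tilW_n$-invariance, I would decompose $\tilW_n = \Lambda_n \rtimes \Sigma_{n+1}$ and treat each factor separately. Invariance under translation by $\Lambda_n$ is tautological, since the tessellation is the $\Lambda_n$-orbit of $\frP_n$ by construction. For $\Sigma_{n+1}$-invariance, the key observation is that $\Sigma_{n+1}$ acts on $V_n$ by permuting the generators $\lambda_i$ of $\Lambda_n$, hence preserves the lattice $\Lambda_n$, and acts by isometries for the metric on $V_n$ inherited from $(\RR^{n+1})^*$ (whose standard dual basis is permuted, giving an orthogonal transformation). A lattice-preserving isometry automatically permutes the Voronoi cells of the lattice, so for $\sigma \in \Sigma_{n+1}$ and $v \in \Lambda_n$ the translate $\sigma(\frP_n + v)$ is again a $\Lambda_n$-translate of $\frP_n$ and thus lies in the tessellation.

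The only nontrivial point, and the main thing to check carefully, is that the distinguished Voronoi cell $\frP_n$ is itself $\Sigma_{n+1}$-stable, so that the action of $\sigma$ does not secretly send $\frP_n + v$ to a translate of a different Voronoi cell. Equivalently, one must verify that the center of $\frP_n$ is a $\Sigma_{n+1}$-fixed element of $\Lambda_n$. This reduces to a short calculation of the centroid of the vertex set $\{\sigma \cdot (\tfrac{1}{n+1}\sum_a a\lambda_a)\}_{\sigma \in \Sigma_{n+1}}$, which equals $\tfrac{n+2}{2(n+1)} \sum_a \lambda_a$ and therefore vanishes in $V_n = \RR^{n+1}/\RR\langle \sum_a \lambda_a\rangle$. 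Since $0 \in \Lambda_n$ is manifestly $\Sigma_{n+1}$-fixed, the argument closes; this is the only step in which the particular geometry of the permutohedron (as opposed to general Voronoi/lattice formalism) enters.
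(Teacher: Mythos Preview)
Your proposal is correct and follows the paper's implicit approach: the paper states this corollary immediately after the Voronoi-cell lemma and gives no proof, so you are filling in exactly the details the authors regard as obvious.

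One remark: your ``only nontrivial point'' is in fact unnecessary. For any full-rank lattice $\Lambda \subset V$, every Voronoi cell is a $\Lambda$-translate of the cell at the origin, so the Voronoi tessellation is automatically the set $\{\frP_n + v : v \in \Lambda_n\}$ regardless of which particular cell $\frP_n$ happens to be. Any lattice-preserving linear isometry therefore permutes this set; there is no ``different Voronoi cell'' for $\sigma(\frP_n + v)$ to land in. Your centroid computation does establish the stronger (and true) statement that $\frP_n$ is the cell at $0$ and hence is itself $\Sigma_{n+1}$-stable, but the corollary as stated does not require this.
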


This means in particular that for every facet $F$ of $\frP_n,$ there is a vector $v\in \Lambda_n$ such that $\frac12v$ is the center of $F$. 
Recall that earlier we exhibited a bijection between facets of $\frP_n$ and nonempty proper subsets $I\subset \{1,\ldots, n+1\}.$ Now we see another way to understand this bijection:
let $\lambda_I=\sum_{i\in I}\lambda_i.$ Then $\Lambda_n$ is the $\NN$-span of $\{\lambda_I\}_{\emptyset\subsetneq I\subsetneq [n+1]}.$ The Voronoi cells adjacent to the cell at the origin are those cells which are centered at points of the form $\lambda_I.$ This gives a correspondence between facets of $\frP_n$ and nonempty proper subsets $I$ of $[n+1],$ associating to $I$ the facet $F_I$ through which $\lambda_I$ points.
Moreover, if we write $I^c$ for the complement of $I$ in $[n-1]$ (also nonempty and proper because $I$ is), then $\lambda_{I^c}=-\lambda_I,$ so $F_{I^c}$ is the face opposite to $F_I$.


\subsection{Honeycomb}

We are now ready to introduce the main object of study in this paper.

\begin{defn}
Let $ \partial \frP_n \subset V_n$ denote the boundary of the $n$-permutohedron.

The {\em $(n-1)$-honeycomb} $\frH_{n-1}\subset V_n$ is the piecewise linear hypersurface given by the union of translates 
$$
\xymatrix{
\frH_{n-1} = \partial \frP_n + \Lambda_n
}
$$
\end{defn}


We will describe the singularities of $\frH_{n-1}$  in the language of \cite{Narb}, which introduced a special class of Legendrian singularities, called \emph{arboreal singularities}; these are conjecturally the deformation-stable Legendrian singularities, and the category of microlocal sheaves along an arboreal singularity admits a simple description as modules over an acyclic quiver. In \cite{Nexp}, it was shown that any Legendrian singularity admits a deformation to an arboreal singularity which preserves the category of microlocal sheaves along the singularity. Thus the study of microlocal sheaves along any Lagrangian in a symplectic manifold is reduced to a two-step process: first, deform the Lagrangian so that each singularity is the cone on some arboreal singularity, then glue the corresponding categories together.

In the case of the honeycomb (when, in section 4, we describe its appearance as a Weinstein skeleton), the first step is unnecessary, since, as we will see, all the singularities of the honeycomb are already arboreal. In fact, in the language of \cite{Narb}, all its singularities are arboreal singularities of type $A_k$ for some $k$; this arboreal singularity is homeomorphic to the cone on the $(k-2)$ skeleton of the $k$-simplex $\Delta^k.$ This structure makes it possible to describe the symplectic geometry of the honeycomb by means of the combinatorial constructions in this section.

Below we describe the singularities of the honeycomb $\frH_{n-1}$ and show that they are all stratified homeomorphic to $A_n$ arboreal singularities; later, when we present the honeycomb as a Lagrangian skeleton for the pair of pants, we will see that the singularities of this Lagrangian actually have the symplectic geometry of $A_n$ arboreal singularities.

Let $\Delta^k \subset \BR^{k+1}$ be a $k$-simplex with barycenter at the origin,
and let $sk_{k-2}\Delta^k \subset \BR^{k+1}$ denote its $(k-2)$-skeleton.
For $A\subset \BR^{k+1}$, we denote by $\Cone(A)$ the cone $\BR_{\geq 0} \cdot A\subset \BR^{k+1}$ given by scalings of $A$ (with cone point at the origin).

\begin{defn}
 
For $k> 0$, the topological $A_k$-hypersurface singularity is given by
$$
\xymatrix{
L_{A_k} = \Cone(sk_{k-2}\Delta^k) \subset \BR^{k+1}.
}
$$
\end{defn}

This will be our topological model for the Legendrian arboreal singularity $\calL_{A_n}$; we will return to the symplectic geometry of this singularity in the proof of Proposition \ref{prop:miccat}. Since so far $\frH_{n-1}$ is only a topological space, we will describe its singularities for now in terms of the topological singularity $L_{A_k}.$

\begin{defn} Define the stratified space $\frH_{n-1}$ to be the space $\frH_{n-1}$ equipped with the stratification by relatively open faces. In other words, the strata of $\frH_{n-1}$ are indexed by faces $F$, and the stratum $S_F$ corresponding to the face $F$ is just the face $F$, not including any lower-dimensional faces incident on $F$.
\end{defn} 
Now we can describe precisely the singularities of the honeycomb:

\begin{prop}\label{prop:sings}
If $p\in\frH_{n-1}$ is a point in an $(n-k)$-dimensional stratum of the honeycomb, then a neighborhood of $p$ in $\frH_{n-1}$ is homeomorphic to $L_{A_k}\times\RR^{n-k}.$
\end{prop}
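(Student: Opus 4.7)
The plan is to reduce the statement to a combinatorial claim about how many permutohedra in the $\Lambda_n$-tiling of $V_n$ share a given codimension-$k$ face, and then identify the resulting local picture with $L_{A_k} \times \RR^{n-k}$.

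First, using $\Lambda_n$-translation symmetry, I may assume that $p$ lies in the relative interior of a codimension-$k$ face $F$ of the central permutohedron $\frP_n$. A small neighborhood of $p$ in $\frH_{n-1}$ then splits as a product of an open disk in $F$ (contributing the $\RR^{n-k}$ factor) with a transverse slice $S_p \subset \frH_{n-1}$ through $p$. So it suffices to produce a stratified homeomorphism $S_p \cong L_{A_k} = \Cone(sk_{k-2}\Delta^k)$, and for this it suffices to identify the link of $p$ in $S_p$ with $sk_{k-2}\Delta^k$.

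The key combinatorial input is the claim that exactly $k+1$ permutohedra of the tiling contain $F$, that their centers form the vertices of a $k$-simplex centered at the center of $F$, and that any $j$ of them share a codimension-$(j{-}1)$ face containing $F$ for each $1 \le j \le k+1$. I would prove this by invoking the $\tilW_n$-symmetry to reduce to the case where $F$ contains a fixed vertex $v$ of $\frP_n$, together with the product description $F \cong \prod_i \frP_{n_i}$ of faces incident at $v$: the $k+1$ relevant lattice translates are then determined directly by the ordered partition of $[n+1]$ labelling $F$, with centers lying at explicit sums of the $\lambda_I$ cutting out the facets through $F$. Equivalently, this is the classical fact that, since $\Lambda_n$ is dual to the $A_n$ root lattice, its Delaunay cells are simplices, so its Voronoi cells (the permutohedra) meet $k+1$ at each codimension-$k$ face.

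Granting this, the transverse slice $S_p$ becomes the union of the boundaries of $k+1$ simplicial cones in the $k$-dimensional transverse ambient space, all meeting at the apex $p$; its link is $\partial \Delta^k$ with the interiors of its top-dimensional facets deleted, which is exactly $sk_{k-2}\Delta^k$. Coning from $p$ yields $S_p \cong L_{A_k}$, and taking the product with the $(n-k)$-disk in $F$ gives the desired local homeomorphism. The main obstacle is the combinatorial middle step: rigorously pinning down which $k+1$ lattice translates contribute and verifying their simplicial incidence pattern. Once this is in hand, the topological identification is essentially formal, since $\frH_{n-1}$ is by construction the union of affine polytope boundaries in $V_n$ and the transverse slice inherits its stratified structure directly from the tiling.
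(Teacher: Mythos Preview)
Your proposal is correct, and the route is genuinely different from the paper's. The paper argues \emph{from the inside}: it first treats the case where $p$ is a vertex, uses the Cayley--graph description of $\frP_n$ to show that exactly $n+1$ edges of the honeycomb meet at $p$, and then checks that $k$-subsets of these edges index the $k$-faces through $p$; this identifies the link at a vertex with $sk_{n-2}\Delta^n$, and the general stratum is obtained by restricting to a neighborhood that misses the lower strata. You argue \emph{from the outside}: you count the permutohedra meeting the codimension-$k$ face $F$ directly, invoking the Voronoi/Delaunay duality for $\Lambda_n \cong A_n^*$ to see that the dual Delaunay cell is a $k$-simplex, so the transverse slice is tiled by $k+1$ simplicial cones whose common boundary has link $sk_{k-2}\Delta^k$.

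Your approach is cleaner and handles all strata uniformly, at the cost of importing the classical fact that the Delaunay tessellation of $A_n^*$ is simplicial (which is in Conway--Sloane, already cited). The paper's approach is more self-contained: the only outside input is the Cayley--graph description of the $1$-skeleton, and the simplicial incidence pattern at a vertex is deduced by hand from the observation that the $n$ translates of $\frP_n$ through $v$ contribute exactly one new edge. Either way the ``main obstacle'' you flag --- pinning down the $k+1$ translates and their pairwise incidences --- is the same combinatorial fact, just phrased dually; your Delaunay argument dispatches it in one line, whereas the paper spends a paragraph on the edge count.
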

\begin{proof}
Suppose first that $p$ is in the 0-dimensional stratum of $\frH_{n-1},$ \emph{i.e.}, $p$ is a vertex in the honeycomb. In this case the proposition claims that in a small neighborhood $B$ of $p$, the complement $B^\circ:=B\setminus\{p\}$ is homeomorphic to $\RR_{>0}\times sk_{n-2}\Delta^n.$ As a simplicial complex, $sk_{n-2}\Delta^n$ is determined by its face poset, which is the poset of nonempty subsets of $[n+1]:=\{0,1,\ldots,n\}$ containing at most $n-1$ elements.

Let $\epsilon>0$ such that the radius $\epsilon$ sphere $S_\epsilon$ centered at $p$ is contained in $B$, and let $B_{link}$ be the intersection of $B^\circ$ with $S_\epsilon.$ Then positive dilation gives a homeomorphism $\RR_{>0}\times B_{link}\cong B^\circ.$ Hence we need to prove that $B_{link}$ admits the structure of a regular cell complex with face poset isomorphic to the poset of nonempty subsets of $[n+1]$ containing at most $n-1$ elements.

We claim first that in the honeycomb, the vertex $p$ is incident on $n+1$ edges. To see this, we recall the Cayley graph description of $\frP_n$: A vertex $v$ in $\frP_n$ is incident on $n$ edges, and the $n$ $\Lambda_n$-translates of $\frP_n$ which contain $v$ correspond to the $n$ facets in $\frP_n$ containing $v$, which correspond in turn to the $n$ choices of $n-1$ edges in $\frP_n$ which contain $v.$ A copy of $\frP_n$ which contains $v$ is determined by $n$ edges containing $v$, so each translate incident on $v$ contains exactly one new edge which contains $v$. In fact, this edge is the same for all translates: otherwise, translations would produce at least two new edges containing $v$, but no translate could contain both of these, contradicting the fact that translates of $\frP_n$ tile space.

Now by symmetry we can conclude that for $1\leq k\leq n-1,$ any choice of $k$ edges containing $p$ determines a $k$-face containing $p$ in some translate of $\partial\frP_n$, and these are all the faces containing $p$. In other words, there is a bijection 
$$
\xymatrix{
\{\text{$k$-faces in $\frH_{n-1}$ containing $p$}\}\cong \{\text{$k$-element subsets of $[n+1]$}\},
}
$$
and incidence relations among these are given by the natural poset structure on the set of subsets of $[n+1].$

Thus we have established the proposition in the case where $p$ is in a 0-dimensional stratum. We can derive the case where $p$ is in a $k$-dimensional stratum by starting with $p'$ a vertex contained in a small neighborhood $B'$, and then restricting $B'$ to a ball $B$ which does not contain any strata of dimension less than $k$. In the analysis above, this corresponds to restricting to a subposet of the set of subsets of $[n+1],$ which we can identify as the face poset of $sk_{n-k-2}\Delta^{n-k}.$
\end{proof}

The proof above actually establishes more than an abstract description of the singularities of the honeycomb $\frH_{n-1}$: it also explains the inductive way in which they are embedded in one another. Note that the $A_k$ singularity $L_{A_k}= \Cone(sk_{k-2}\Delta^k)$ contains $k+1$ copies of $\RR\times L_{A_{k-1}},$ each embedded as the cone on a small neighborhood of a vertex in $sk_{k-2}\Delta^k.$  Since the description we gave above respects all of these identifications, we can elaborate on the above proposition:
\begin{cor}
	For $0\leq k\leq n-2,$ let $\alpha$ be a $k$-face in $\frH_{n-1}$ incident on a $(k+1)$-face $\beta.$ Then the singularity $L_{A_{n-k}}\times \RR^k$ lying along $\beta$ is one of the $n-k+1$ copies of $L_{A_{n-k-1}}\times\RR^{k+1}$ embedded as described above in the singularity $L_{A_{n-k}}\times\RR^{k}$ lying along $\alpha.$
\end{cor}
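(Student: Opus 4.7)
The plan is to deduce the corollary directly from the combinatorial analysis given in the proof of Proposition~\ref{prop:sings}, matched against the inductive self-similarity of the simplicial cone models $L_{A_m} = \Cone(sk_{m-2}\Delta^m)$. By $\Lambda_n$-translation symmetry of $\frH_{n-1}$ and its stratification, the statement is local and can be checked in a neighborhood of any single vertex $p \in \bar\alpha$. Moreover, since the $\RR^k$ tangent to $\alpha$ splits off of all the relevant local pictures, it suffices to treat the case $k = 0$ and then stabilize by $\RR^k$; this is exactly the reduction already used in the proof of Proposition~\ref{prop:sings}.

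For the vertex case, the proof of Proposition~\ref{prop:sings} exhibits a stratified homeomorphism identifying a punctured neighborhood of $p$ with $\RR_{>0} \times sk_{n-2}\Delta^n$, and it furnishes an explicit bijection sending an $m$-face of $\frH_{n-1}$ through $p$ to an $m$-element subset of the vertex set $[n+1]$ of $\Delta^n$ (equivalently, to an $(m-1)$-simplex of $sk_{n-2}\Delta^n$). Under this bijection the incidence $\alpha \subset \bar\beta$ corresponds to a subset inclusion $S_\alpha \subset S_\beta$ differing by exactly one element of $[n+1]$.

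The last step is to match this inclusion with the embedded copies recorded in the excerpt. A direct computation shows that the link in $sk_{m-2}\Delta^m$ of any vertex $v$ of $\Delta^m$ is $sk_{m-3}\Delta^{m-1}$ on the remaining vertices, so that near the ray through $v$ the model $L_{A_m} = \Cone(sk_{m-2}\Delta^m)$ is homeomorphic to $\RR \times L_{A_{m-1}}$; this produces the embedded copies of $\RR \times L_{A_{m-1}}$ enumerated by the vertices of $\Delta^m$. Specializing to $m = n-k$, identifying $\Delta^{n-k}$ with the link of $S_\alpha$ in $\Delta^n$, and taking the product with the tangential $\RR^k$, the $(k+1)$-faces $\beta$ containing $\alpha$ at $p$ biject with the vertices of this link simplex, and each such $\beta$ is identified with one of the distinguished $L_{A_{n-k-1}} \times \RR^{k+1}$ factors appearing inside the $L_{A_{n-k}} \times \RR^k$ model along $\alpha$. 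This is exactly the claimed identification of singularity models.

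The main obstacle I expect is purely bookkeeping: keeping straight which vertex of the link simplex corresponds to which face incidence, and matching the enumeration of copies with the right count (whether one indexes by the $(k+1)$-faces at $\alpha$ or by the $k$-faces of a fixed $\beta$ through $p$, which gives the count of $k+1$ appearing in the statement). All of the substantive geometric content is already contained in the proof of Proposition~\ref{prop:sings}; the corollary is a repackaging of that combinatorics under the face–subset dictionary, together with the recursive self-similarity of the arboreal model $L_{A_m}$.
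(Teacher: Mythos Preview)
Your approach is exactly the paper's: the corollary is stated without a separate proof, the sentence preceding it explaining that the face--subset bijection from the proof of Proposition~\ref{prop:sings} already respects the recursive structure of $L_{A_m} = \Cone(sk_{m-2}\Delta^m)$ (each vertex of $\Delta^m$ giving an embedded $\RR\times L_{A_{m-1}}$). Your write-up simply spells this out, and your closing remark about the bookkeeping of the count is apt---the statement as printed has the embedding direction and the count phrased in a way that only makes sense after the reinterpretation you give.
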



\subsection{Cyclic structure sheaf}

From the above description we see that the only data needed around a point in $\frH_{n-1}$ to determine its singularity type is the number of permutohedra in the tiling of $V_n$ which contain that point. 
We will encode that data, along with the data of relations among these singularities, in a cosheaf on $\frH_{n-1},$ from which we will subsequently produce a combinatorial model for the $A$-model category associated to $\frH_{n-1}.$

\begin{defn}
Define the cosheaf of finite sets $\calO_{n-1}$ over the honeycomb $\frH_{n-1}$ to be the connected components
of the complement
$$
\xymatrix{
\calO_{n-1}(B) = \pi_0(B \setminus (B\cap \frH_{n-1})) 
}
$$
for small open balls $B\subset V_n$ centered at points of $\frH_{n-1}.$ For inclusions $\iota:B'\hookrightarrow B,$ the corresponding corestriction map is the map induced on $\pi_0$ by $\iota.$
\end{defn}

Because the cosheaf $\calO_{n-1}$ is constructible with respect to a stratification of $\frH_{n-1}$ as a regular cell complex, the cosheaf $\calO_{n-1}$ can be equivalently described as a contravariant functor from the exit-path category $P(\frH_{n-1})$ associated to the stratified space $\frH_{n-1}.$ (Likewise, a constructible sheaf on $\frH_{n-1}$ is a covariant functor from $P(\frH_{n-1}).$) The category $P(\frH_{n-1})$ is equivalent to the poset which has one point $\alpha$ for each stratum $S_\alpha$ in $\frH_{n-1}$, and one arrow $\alpha\to\beta$ for every relation $S_\alpha\subset \bar{S}_\beta.$

For each $\alpha\in P(\frH_{n-1})$, pick a ball $U_\alpha$ such that $U_\alpha\cap\bar{S}_\alpha=S_\alpha,$ and $U_\alpha\cap S_\beta=\emptyset$ for $\alpha,\beta$ incomparable in $P(\frH_{n-1})$. Then we can define $\calO_{n-1}$ as the functor

$$
\xymatrix{
\calO_{n-1}:P(\frH_{n-1})^{op} \to \on{Sets}, 
&
\calO_{n-1}(\alpha)=\pi_0(U_\alpha\setminus (U_\alpha\cap \frH_{n-1})),
}
$$
where the map $\pi_0(U_\beta\setminus (U_\beta\cap \frH_{n-1}))\to \pi_0(U_\alpha\setminus (U_\alpha\cap \frH_{n-1}))$ induced by the incidence $\alpha\to\beta$ is defined through the inclusions
$$
\xymatrix{
U_\alpha\setminus (U_\alpha\cap \frH_{n-1})& \ar@{_{(}->}[l] U_\alpha\cap U_\beta\setminus (U_\alpha\cap U_\beta\cap \frH_{n-1}) \ar@{^{(}->}[r]^-\sim & U_\beta\setminus (U_\beta\cap \frH_{n-1}),
}
$$ using that the second is a homotopy equivalence.

Recall the cyclic category $\Lambda$ of finite cyclically ordered nonempty sets: its objects are finite subsets $S\subset S^1$, and morphisms $S\to S'$ are given by homotopy classes of degree 1 maps $\varphi:S^1\to S^1$ such that $\varphi(S)\subset S'.$ We would like to lift $\calO_{n-1}$ to $\Lambda$; in other words, we want to express $\calO_{n-1}$ as the composition of the forgetful functor $\Lambda\to \on{Sets}$ with a functor $\tilcalO_{n-1}:P(\frH_{n-1})\to\Lambda.$ Such a lift is the same as a choice of cyclic ordering on every set $\calO_{n-1}(\alpha).$

Moreover, we want this lift to respect the $\tilW_n$ symmetry of $\frH_{n-1}.$ The $\tilW_n$ symmetry of the honeycomb $\frH_{n-1}$ induces a $\tilW_n$ action on $P(\frH_{n-1})$, and for $\alpha\in P(\frH_{n-1}),$ this symmetry also induces an action of $\tilW_n$ on the set $\pi_0(B_\alpha\setminus (B_\alpha\cap \frH_{n-1})).$ This action does not affect the set itself but will alter the cyclic ordering if this set is endowed with one. Hence the condition of $\tilW_n$-equivariance places extra requirements on the structure of $\tilcalO_{n-1}.$

\begin{lemma}
There are $n!$ possible $\tilW_n$-equivariant choices of lift $\tilcalO_{n-1},$ each determined by a choice of cyclic ordering on $\calO_{n-1}(\alpha),$ where $\alpha$ corresponds to any vertex in $\frH_{n-1}.$
\end{lemma}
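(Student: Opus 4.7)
The plan is to reduce the data of a $\tilW_n$-equivariant lift to a single local choice at a vertex. By Proposition~\ref{prop:sings}, a neighborhood of a vertex $v\in\frH_{n-1}$ is homeomorphic to $\Cone(sk_{n-2}\Delta^n)$, so $\calO_{n-1}(v)$ is an $(n+1)$-element set (identified with the $n+1$ permutohedra meeting $v$, equivalently with the vertices of $\Delta^n$), admitting exactly $(n+1)!/(n+1)=n!$ cyclic orderings. Since $\tilW_n$ acts transitively on the vertices of $\frH_{n-1}$---combining $\Sigma_{n+1}$-transitivity on the vertices of a single $\frP_n$ with $\Lambda_n$-translation between tiles---a cyclic ordering at $v$ is all that is needed to specify the lift at every other vertex by transport.

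To pin down $\Stab_{\tilW_n}(v)$, I would impose $\sigma v+\lambda=v$ on $(\sigma,\lambda)\in\tilW_n$ with $v=\tfrac{1}{n+1}\sum_a a\lambda_a$. Solving for $(\sigma,\lambda)$ forces $\sigma$ to be a cyclic shift $a\mapsto a+c\pmod{n+1}$, with $\lambda=v-\sigma v$ determined explicitly and lying in $\Lambda_n$, so $\Stab_{\tilW_n}(v)\cong\BZ/(n+1)$. Its action on $\calO_{n-1}(v)$ is the regular cyclic permutation of the $n+1$ permutohedra meeting $v$.

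For a higher-dimensional stratum $\alpha\subset\frH_{n-1}$ and any vertex $v\in\bar\alpha$, I would define the cyclic order on $\calO_{n-1}(\alpha)$ by restriction along the injective corestriction $\calO_{n-1}(\alpha)\hookrightarrow\calO_{n-1}(v)$. Independence of this restriction under change of $v$ follows from the product decomposition $\alpha=\frP_{n_1}\times\cdots\times\frP_{n_r}$: the subgroup $\Sigma_{n_1+1}\times\cdots\times\Sigma_{n_r+1}\subset\Sigma_{n+1}$ stabilizes $\alpha$ and acts transitively on its vertices, so by $\tilW_n$-equivariance the two induced cyclic orders on $\calO_{n-1}(\alpha)$ must agree. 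Injectivity of every corestriction map then automatically ensures compatibility with the morphisms of $\Lambda$, so the assignment assembles into a functor $\tilcalO_{n-1}:P(\frH_{n-1})^{op}\to\Lambda$.

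The main obstacle is the bookkeeping at $v$ itself: one must verify that each of the $n!$ cyclic orderings at $v$ is compatible with the regular $\BZ/(n+1)$-action of $\Stab_{\tilW_n}(v)$, so that transport gives a well-defined cyclic order at every $\tilW_n$-translate of $v$. The key point here is that the automorphism group of any cyclic ordering on an $(n+1)$-element set is itself a copy of $\BZ/(n+1)$ acting regularly on the set; matching this automorphism group with the stabilizer's image inside $\on{Sym}(\calO_{n-1}(v))$ provides the required compatibility and realizes the bijection between cyclic orderings at $v$ and $\tilW_n$-equivariant lifts of $\calO_{n-1}$.
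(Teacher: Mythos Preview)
Your overall architecture---transport the cyclic order from one vertex to all others via $\tilW_n$, then restrict to higher-dimensional faces, then check consistency---is the same as the paper's. But two steps do not go through as written.

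\textbf{Consistency at higher faces.} You note that the product group $\Sigma_{n_1+1}\times\cdots\times\Sigma_{n_r+1}$ stabilizes the face $\alpha$ and acts transitively on its vertices, and then conclude ``by $\tilW_n$-equivariance'' that the two restricted cyclic orders on $\calO_{n-1}(\alpha)$ agree. This is circular: equivariance is precisely what you are trying to establish. What is actually required is that each element of this subgroup acts \emph{trivially} (or at least by a cyclic rotation) on $\calO_{n-1}(\alpha)$, and you have not argued this. The paper fills exactly this gap by a different mechanism: it reduces to the case of a single affine reflection $\sigma$ fixing $\beta$, and then observes that the reflecting hyperplane meets $\beta$ transversally and therefore passes through every component of $B_\beta\setminus(B_\beta\cap\frH_{n-1})$, so $\sigma$ fixes each such component.

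\textbf{The vertex stabilizer.} Your computation that $\Stab_{\tilW_n}(v)\cong\BZ/(n+1)$ acts regularly on $\calO_{n-1}(v)$ is correct and is a step the paper does not make explicit. But your conclusion from it is wrong. You assert that since the automorphism group of any cyclic order on an $(n+1)$-element set is a regular $\BZ/(n+1)$, it can be ``matched'' with the stabilizer's image. Two regular $\BZ/(n+1)$-subgroups of $\on{Sym}(\calO_{n-1}(v))$ need not coincide: for $n=3$, the $4$-cycle $(P_0\,P_1\,P_2\,P_3)$ preserves only the cyclic orders $[P_0,P_1,P_2,P_3]$ and $[P_0,P_3,P_2,P_1]$, not all $3!=6$ of them. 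In general a fixed regular $\BZ/(n+1)$-action preserves exactly $\phi(n+1)$ cyclic orders, so for $n\geq 3$ your argument cannot yield compatibility for all $n!$ of them. The paper's proof avoids this point by working only with the setwise stabilizer of a positive-dimensional face $\beta$ (which is reflection-generated) and never analyzing $\Stab_{\tilW_n}(v)$ directly; your more careful bookkeeping exposes a genuine subtlety in the count, but then resolves it incorrectly.
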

\begin{proof}
Suppose we have chosen a cyclic ordering on $\calO_{n-1}(\alpha)$ as in the lemma. For $\alpha\to \beta,$ the inclusion $\calO_{n-1}(\beta)\hookrightarrow\calO_{n-1}(\alpha)$ determines a cyclic ordering on all $\calO_{n-1}(\beta).$ Conversely, if $\alpha'\in P(\frH_{n-1})$ also corresponds to a 0-dimensional stratum in $\frH_{n-1},$ then the action of $\tilW_n$ transfers the cyclic ordering on $\calO_{n-1}(\alpha)$ to $\calO_{n-1}(\alpha')$ and hence also determines a cyclic order on all $\beta$ with $\alpha'\to\beta.$ We have to show that for any incidence relation of the form
$$
\xymatrix{
\alpha\ar[r]&\beta&\ar[l]\alpha'
}
$$
in $P(\frH_{n-1}),$ both of the above methods of determining a cyclic order on $\calO_{n-1}(\beta)$ coincide. In other words, if $\sigma\in \tilW_n$ is any element taking $\alpha$ to $\alpha'$ and taking $\beta$ to itself, then we must show that $\sigma$ acts as the trivial permutation on $\calO_{n-1}(\beta).$ Since the affine Weyl group $\tilW_n$ is generated by reflections through root hyperplanes, we may assume $\sigma$ is a reflection through a root hyperplane. Any intersection of such a hyperplane with a face of a Voronoi cell for the lattice $\Lambda_n$ is transverse. Thus, if $\sigma\cdot \beta=\beta,$ then $\sigma$ is reflection through a hyperplane intersecting every connected component in $\calO_{n-1}(\beta)$ and hence acts as the trivial permutation on $\calO_{n-1}(\beta).$
\end{proof}

Now we fix a cyclic order at vertices as follows: at a vertex $\alpha$, elements of the set $\calO_{n-1}(\alpha)$ can be identified with the $n+1$ copies of the permutohedron which contain $\alpha.$ We endow this set of permutohedra with the cyclic order $[P_0,\ldots,P_n]$ such that (taking indices cyclically modulo $n+1$) we have $P_i=P_{i-1}+\lambda_i.$ Since we have $w\cdot P_i=w\cdot P_{i-1}+w\cdot\lambda_i$ for any $w\in \tilW_n,$ this gives a consistent choice of cyclic structure at all vertices. 

\begin{figure}[h]
\includegraphics[width=4cm]{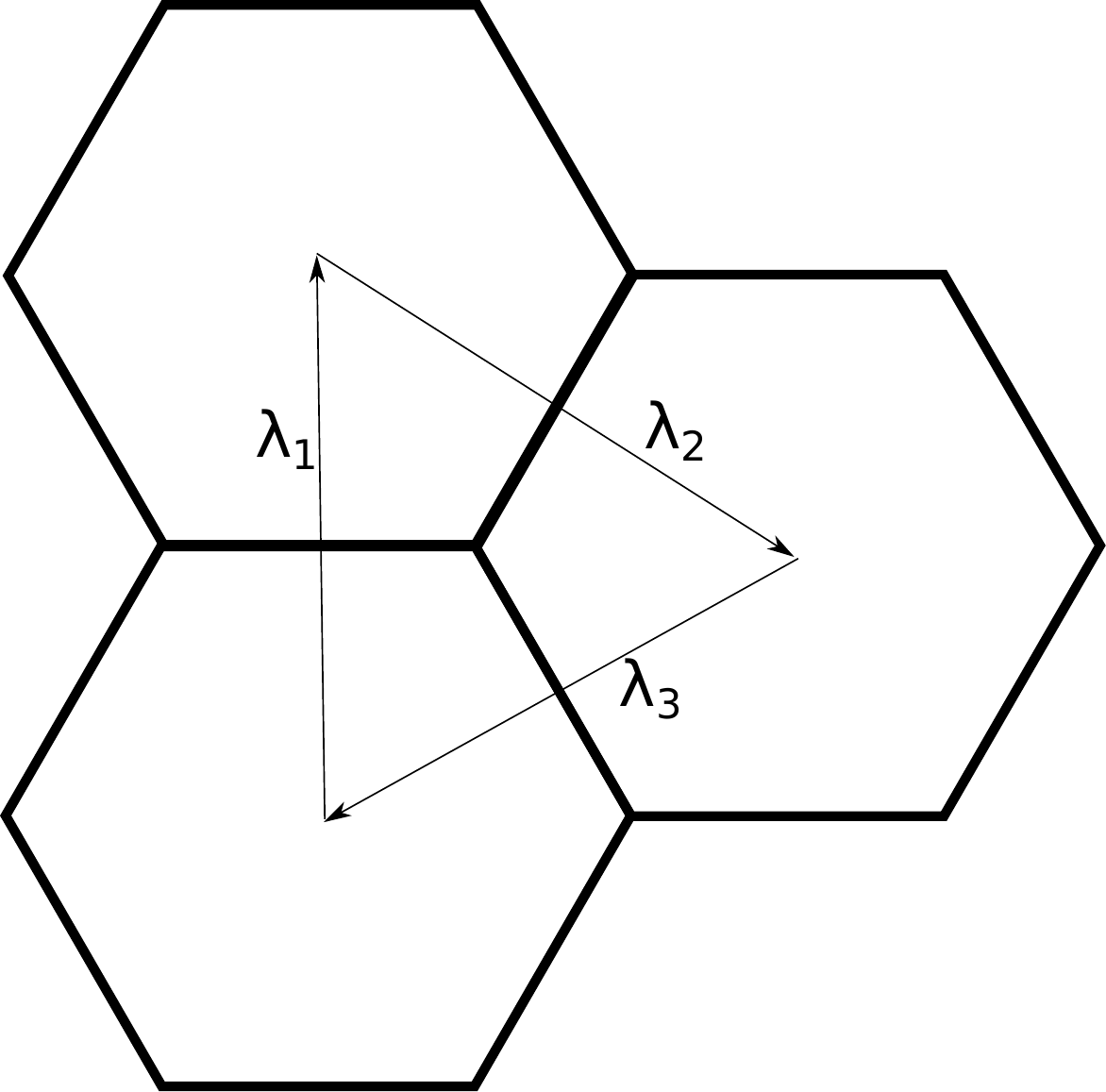}
\caption{The cyclic order at a vertex in $\frH_1$}
\end{figure}

\begin{defn}\label{definition:cyc}
We define the functor
$$
\xymatrix{
\widetilde{\calO}_{n-1}:P(\frH_{n-1})^{op} \to \Lambda
}
$$
by using this cyclic order to lift the cosheaf of sets defined above to a cosheaf of cyclic sets:
$$
\xymatrix{
\widetilde{\calO}_{n-1}(\alpha)=\pi_0(B_\alpha\setminus (B_\alpha\cap \frH_{n-1})),
}
$$
where $\widetilde{\calO}_{n-1}(\alpha)$ is given the cyclic order described in the previous paragraph. This functor factors through the non-full subcategory $\Lambda_{inj}$ of cyclic sets and injective morphisms, and we will denote the resulting functor $P(\frH_{n-1})^{op}\to\Lambda_{inj}$ also by $\tilcalO_{n-1}.$

\end{defn}


\subsection{Quantization} The cyclic cosheaf $\tilcalO_{n-1}$ encodes the data of all the singularities of $\frH_{n-1},$ our combinatorial model for a skeleton of the pair of pants. Following the procedure described in \cite{Ncyc}, we can produce from this cosheaf a sheaf (respectively, cosheaf) on $\frH_{n-1}$ whose global sections are a dg category modeling the infinitesimally wrapped (respectively, partially wrapped) Fukaya category of branes running along the skeleton $\frH_{n-1}$. This procedure is analogous to the constructions of topological Fukaya categories described in \cite{DK, HKK}, although thanks to the arboreal singularities of our skeleton, the construction we describe here works in arbitrary dimensions.

The key ingredient in our construction is a functor
$$
\xymatrix{
\calQ: \Lambda_{inj}^{op}\ar[r] &  \BZ/2\on{-dgst}_k
}
$$
which is described as Construction~\ref{cons:cycfunc} below. First, for $S=[s_1,\ldots,s_{n+1}]$ a cyclic set of $n+1$ elements, consider the $\BZ/2$-dg category $(A_n\on{-Perf}_k)_{\BZ/2},$ whose objects include the $n$ simple modules $k_1,\ldots,k_n$ and the shifted injective-projective $I_n[1]=P_1[1].$ We will relabel these objects $s_1,\ldots,s_{n+1},$ respectively, and denote by $\langle s_1,\ldots,s_{n+1}\rangle$ the full subcategory on these objects. Let $\calC_{S}$ be the dg category of twisted complexes on $\langle s_1,\ldots,s_{n+1}\rangle$:
$$
\calC_S:=\on{Tw}\langle s_1,\ldots,s_{n+1}\rangle.
$$

Since the category $(A_n\on{-Perf}_k)_{\BZ/2}$ is generated by simples $s_i,$ the category $\calC_{S}$ is equivalent to $(A_n\on{-Perf}_k)_{\BZ/2}$ but with a manifest cyclic symmetry: the category $\calC_S$ admits an action of $\ZZ/(n+1)\ZZ$, whose generator takes $s_i$ to $s_{i+1},$ indexed cyclically. 
\begin{cons}[\cite{Ncyc} Proposition 3.5] \label{cons:cycfunc}The functor
$$\xymatrix{
\calQ: \Lambda^{op}\ar[r] & \BZ/2\on{-dgst}_k
}
$$
has value $\calQ(S)=\calC_{S}$, and the map $\calQ(i):\calC_{S'}\to\calC_S$ induced by the inclusion $i:S\hookrightarrow S'$ is the dg quotient of $\calC_{S'}$ by the full subcategory on $\{s_i\mid s_i\in S'\setminus S\}.$
\end{cons}
\begin{remark}The functor described in \cite{Ncyc} actually has target in the category of (2-periodic) $A_\infty$ categories and strict functors; the functor described here is a ($\BZ/2$)-dg model of that one. (In fact, below we will describe two different $\BZ/2$-dg models of this functor.)
	See also \cite{DK} for a more extensive discussion of this functor, modeled there using the category of matrix factorizations of $x^n.$
\end{remark}

\begin{remark}\label{remark:Sindex}
	The notational confusion of $s_i\in S$ with $s_i$ the element of $\calC_S$ in the lemma above is meant to indicate that our set of distinguished generators of $\calC_S$ is indexed by the cyclic set $S$. The cyclic sets we consider will in general be sets $S=[P_1,\ldots,P_{n+1}]$ of adjacent permutohedra as at the end of the previous section; in this case we will continue to denote the generators of $\calC_S$ by $s_1,\ldots,s_{n+1},$ with the understanding that $s_i$ is indexed to $P_i.$ We will always understand the indexing of the $s_i$ cyclically, so that, for instance, we may denote $s_{n+1}$ also by $s_{0}.$
\end{remark}

A choice of a linear order $\{s_1\to\cdots s_n\to s_{n+1}\}$ underlying the cyclic order on $S$ picks out an equivalence $\calC_S\cong (A_n\on{-Perf}_k)_{\BZ/2}$ sending $s_i$ to the simple object $k_i$ for $i=1,\ldots,n$ and sending $s_{n+1}$ to $I_n[1]\cong P_1[1].$ But since the cyclic set $S$ does not have a distinguished linear order, there is no distinguished equivalence $\calC_S\cong (A_n\on{-Perf}_k)_{\BZ/2}$ without making such a choice.

To see a more explicit description of the maps $\calC_{S'}\to\calC_S$ which $\cQ$ induces from an inclusion $S\to S',$ note first that the category $\calC_S$ is generated by degree 1 morphisms $\alpha_i:s_i\to s_{i+1}$. (This corresponds in $A_{n+1}\on{-Perf}_k$ to the degree 1 map of simple objects $k_i\to k_{i+1}$ representing the class of the nontrivial extension.) 
For $|j-i|<n,$ we can form a complex $s_{ij}$ from the objects $s_i,s_{i+1},\ldots,s_j$ by taking successive extensions by the maps $\alpha_i$. We write this object schematically as a complex
$$\xymatrix{
s_{i,j}:=(s_i\ar[r]^-{\alpha_i}&s_{i+1}\ar[r]^-{\alpha_{i+1}}&\cdots\ar[r]^-{\alpha_{j-1}}&s_j)
},$$
where $s_j$ is placed in degree 0.
Under the equivalence $\calC_S\cong (A_n\on{-Perf}_k)_{\BZ/2}$ which sends $s_i$ to $k_1,$ the object $s_{ij}$ corresponds to the 
$A_n$-representation
\[
\xymatrix{
	k\ar[r]^-\sim&k\ar[r]^-\sim&\cdots\ar[r]^-\sim&k\ar[r]&0\ar[r]&\cdots\ar[r]&0
}
\]
with $j-i+1$ nonzero terms.

Let $i_k$ be the inclusion $\{1,\ldots,n+1\}\setminus \{k\}\hookrightarrow \{1,\ldots,n+1\}.$ Then the map $\cQ(i_k)$ acts as
$$
\cQ(i_k)(s_{i,j})=
\begin{cases}
s_{i,j}&i\neq k\neq j\\
s_{i+1,j}&i=k\\
s_{i,j-1}&j=k.
\end{cases}
$$

Since any object in $\calC_S$ is a direct sum of the $s_{i,j}$ and any inclusion $S\to S'$ can be written as a composition of inclusions which miss one element, the above gives a complete description of the behavior of the functor $\calQ$ on the subcategory of $\Lambda^{op}$ whose morphisms are injections of cyclic sets.

It will also be useful to have one other description of the functor $\calQ$ which will give us a different way of thinking about the Fukaya category we describe below. Note that instead of taking the pretriangulated closure of $\langle s_0,\ldots,s_n\rangle$ by using twisted complexes, we could equally well have used perfect modules: \emph{i.e.}, we have an equivalence
$$\calC_S\cong\langle s_0,\ldots,s_n\rangle\on{-(Perf}_k)_{\ZZ/2}:= \Fun^{ex}(\langle s_0,\ldots,s_n\rangle^{op},(\Perf_k)_{\BZ/2}).$$

Since the category $\langle s_0,\ldots,s_n\rangle$ is generated by the degree 1 maps $\alpha_i,$ an object $\calF$ in $\langle s_0,\ldots,s_n\rangle\on{-(Perf}_k)_{\ZZ/2}$ is determined by the $n+1$ objects $\calF(s_i)$ of $(\Perf_k)_{\BZ/2}$ and the $n+1$ maps $\calF(s_i)\leftarrow\calF(s_{i+1}):\calF(\alpha_i).$

The $n+1$ equivalences of this category with $(A_n\on{-Perf}_k)_{\BZ/2}$ come from cyclic reindexing and then applying the equivalence
$$
\xymatrix{
(\langle s_0,\ldots,s_n\rangle\on{-Perf}_k)_{\BZ/2}\ar[r]&(\langle s_1,\ldots,s_n\rangle\on{-Perf}_k)_{\BZ/2}
}
$$
given by forgetting $\calF(s_0)$ and the maps $\calF(s_0)\leftarrow \calF(s_1)$ and $\calF(s_n)\leftarrow\calF(s_0).$

The functor $\calQ$ is defined in this language by

$$
(\cQ(i_k))(\calF)=\left[s_i\mapsto
\begin{cases}
	(\calF(s_{k-1})\leftarrow\calF(s_k))&i=k-1\text{ (mod $n+1$)}\\
\calF(s_i)&i\neq k-1
\end{cases}\right].
$$

\begin{remark}We will see below that the two dg models for the functor $\cQ$, using twisted complexes or using perfect modules, give two different ways of talking about the Fukaya categories we construct. The first is adapted to describing the support of a brane along an arboreal Lagrangian, while the second is better for describing its transverse geometry. (See also Example~\ref{ex:braneconfigs} and the preceding discussion.)
\end{remark}

In addition to the functor $\calQ,$ we would like to produce a \emph{covariant} functor
$$
\xymatrix{
\calQ^{wr}:\Lambda_{inj}\ar[r] & \BZ/2\on{-dgst}_k,
}
$$
in order to produce a \emph{cosheaf} of dg categories on $\frH_{n-1}.$ 
For any object $S$ in $\Lambda$, and for any map $i:S\to S'$ in $\Lambda_{inj}$, we define
$$
\xymatrix{
\cQ^{wr}(S):=\cQ(S),&\cQ^{wr}(i):=\cQ(i)^L:\cQ^{wr}(S)\to\cQ^{wr}(S'),
}
$$
where we write $\cQ(i)^L$ for the left adjoint to the map $\cQ(i).$

\begin{defn}
1) Define the {\em local wrapped and infinitesimal quantizations} $\calQ^{wr}_{n-1}$ and $\calQ^{inf}_{n-1}$ to be the respective compositions
$$
\xymatrix{
\calQ^{wr}_{n-1} = \calQ^{wr}\circ \tilcalO_{n-1}: P(\frH_{n-1})^{op} \ar[r] & \BZ/2\on{-dgst}_k, \\
\calQ^{inf}_{n-1} = \calQ\circ \tilcalO_{n-1}^{op}:P(\frH_{n-1}) \ar[r]&\BZ/2\on{-dgst}_k
}
$$
These are, respectively, a cosheaf and sheaf of $\BZ/2$-dg-categories on $\frH_{n-1}$ whose sections in a small ball around a point in a $k$-face are equivalent to the $\BZ/2$-dg-category of representations of the $A_{n-k}$ quiver.

2) Define the {\em global infinitesimal quantization} $Q^{inf}_{n-1}$ to be the global sections of the sheaf $\calQ_{n-1}$:
$$
\xymatrix{
Q^{inf}_{n-1} = \on{lim}_{P(\frH_{n-1})} \calQ_{n-1}.
}
$$
Define the \emph{global wrapped quantization} to be the idempotent-completion of the global sections of the cosheaf $\calQ_{n-1}^{wr}$:
$$
\xymatrix{
Q^{wr}_{n-1} = \on{Idem}\left(\on{colim}_{P(\frH_{n-1})^{op}} \calQ^{wr}_{n-1}\right).
}
$$
\end{defn}

These will be our respective models of the infinitesimal and (idempotent-completed) wrapped Fukaya categories of the $\ZZ^n$ cover $\tilP_{n-1}$ of the $(n-1)$-dimensional pair of pants.

\begin{remark}These categories inherit $\tilW_n$ symmetries from the $\tilW_n$-action on the poset $\frH_{n-1}$ (and the equivariance of the cyclic structure sheaf $\tilcalO_{n-1}$).
In particular, these categories have an action by the normal subgroup $\Lambda_n\subset \tilW_n$ of translations. We will denote the action of a translation $\lambda\in \Lambda_n$ on an object $\calF$ by $\calF\langle\lambda\rangle,$ so that
$\calF\langle\lambda\rangle(U):=\calF(U+\lambda),$
to match with our notation on the B-side in the next section.
\end{remark}

\subsection{The quantization categories}
We would like to describe more explicitly the categories $Q_{n-1}^{inf}$ and $Q_{n-1}^{wr}.$ Since the category $Q^{inf}_{n-1}$ is presented as a limit, it is easier to understand: an object of $Q^{inf}_{n-1}$ is specified by the data of an object in the categories $\calQ^{inf}_{n-1}(\alpha)$ associated to each vertex $\alpha$ in $\frH_{n-1}$ and coherent isomorphisms relating the results of restriction maps $\calQ^{inf}_{n-1}(\alpha_i)\to\calQ^{inf}_{n-1}(F)$ associated to pairs of inclusions $\alpha_1,\alpha_2\hookrightarrow \bar{F}$ from faces $\alpha_1,\alpha_2$ into the closure of a higher-dimensional face $F$. For $\calF$ an object of $Q^{inf}_{n-1}$ and $\alpha$ a face in the honeycomb $\frH_{n-1},$ we will denote by $\calF_\alpha \in \calQ^{inf}(\alpha)$ the component of $\calF$ placed at the face $\alpha.$

There are two useful ways to understand this category, corresponding to the two descriptions of $\calQ^{inf}(\alpha)=\calC_{\tilcalO(\alpha)},$ as perfect modules and as twisted complexes. We will begin with the first perspective, which allows us to think of an object in $\calQ^{inf}_{n-1}$ as the data of an object of $(\Perf_k)_{\BZ/2}$ at each facet in $\frH_{n-1}$ along with maps among these at codimension 2 faces, satisfying some conditions.

Let $\alpha$ be a vertex in $\frH_{n-1},$ which is contained in $n+1$ cyclically ordered permutohedra $P_0,\ldots,P_n.$ Then an object $\calF$ in $\calQ_{n-1}^{inf}(\alpha),$ understood as a category of perfect modules over $\langle s_0,\ldots,s_n\rangle,$ is a collection of $n+1$ objects $\calF(s_i)$ and $n+1$ degree 1 maps $\calF(s_i)\leftarrow\calF(s_{i+1})$. 
%
\begin{lemma}
Let $F$ be the facet separating the permutohedra $P_i$ and $P_j,$ and let $\calF|_F$ be the restriction of $\calF$ to $\calQ_{n-1}^{inf}(F)$ (along the inclusion $\alpha\to F$). Then the perfect complex $\calF|_F\in \Fun^{ex}(\langle s_i,s_j\rangle^{op},(\Perf_k)_{\BZ/2})$ is given by
$$
\xymatrix{
\calF|_F(s_i)=(\calF(s_i)&\ar[l]\cdots&\ar[l]\calF(s_{j-1}))&
\calF|_F(s_j)=(\calF(s_j)&\ar[l]\cdots&\ar[l]\calF(s_{i-1})).
}
$$
\end{lemma}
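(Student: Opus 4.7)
The plan is to use functoriality of $\cQ$ to reduce the computation to iterated applications of the single-element removal formula given earlier. I factor the inclusion of cyclic sets
\[
\iota: \tilcalO(F) = \{s_i,s_j\} \hookrightarrow \tilcalO(\alpha) = \{s_0,\ldots,s_n\}
\]
as a composition of $n-1$ inclusions, each omitting one element. Concretely, computing $\cQ(\iota)=\calF|_F$ amounts to successively removing $s_{i+1}, s_{i+2}, \ldots, s_{j-1}$ (traversing the cyclic arc from $s_i$ to $s_j$ in the ``positive'' direction), then removing $s_{j+1}, s_{j+2}, \ldots, s_{i-1}$ (cyclically, traversing the complementary arc from $s_j$ back to $s_i$). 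At each stage, the formula for $\cQ(i_k)$ in the perfect modules description tells us that removing $s_k$ modifies only the value at the current cyclic predecessor of $s_k$, replacing it by the two-term complex obtained by attaching $\calF(s_k)$ via the appropriate structure map.

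I will then prove by induction on $m$ that after removing $s_{i+1}, \ldots, s_{i+m}$, the value at $s_i$ of the resulting perfect module is the chain
\[
\bigl(\calF(s_i) \leftarrow \calF(s_{i+1}) \leftarrow \cdots \leftarrow \calF(s_{i+m})\bigr),
\]
with connecting maps $\calF(\alpha_i),\ldots,\calF(\alpha_{i+m-1})$ inherited from the module structure of $\calF$. The base case $m=1$ is precisely the formula displayed in the paper. For the inductive step, the crucial observation is that after the first $m$ removals, all of $s_{i+1},\ldots,s_{i+m}$ are gone from the current cyclic set, so the cyclic predecessor of $s_{i+m+1}$ in this reduced set is $s_i$; hence the formula extends the existing chain at $s_i$ by one more term. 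Since removals of elements on one arc only affect the term at $s_i$ while removals on the other arc only affect $s_j$, the relevant operations commute, and the parallel induction on the second arc yields the stated chain at $s_j$.

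The main obstacle I anticipate is verifying that the differentials of the multi-term complex really are the structure maps of $\calF$ and not, for instance, modified by homotopies coming from successive dg quotients. This reduces to checking that when one iterates the two-term extension formula, the new connecting map between the previously-attached term and the newly-attached one is precisely $\calF(\alpha_{i+m})$, not an alteration by a coboundary. The identification follows from unwinding the description of $\cQ(i_k)$ as a dg quotient in the twisted complex model, where the morphism from $s_{i+m}$ to $s_{i+m+1}$ survives unchanged because neither endpoint lies in the quotiented subcategory at the step in question; the rest is routine bookkeeping of cyclic indices, with care only needed at the wrap-around from $s_n$ to $s_0$ on the second arc.
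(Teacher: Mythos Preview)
Your proposal is correct and is essentially the same approach as the paper's own proof, which consists of the single sentence ``This follows directly from the definition of the functor $\calQ$.'' You have simply unpacked that assertion into the explicit induction on successive single-element removals using the displayed formula for $\cQ(i_k)$, which is exactly what ``follows from the definition'' means here.
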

\begin{proof}This follows directly from the definition of the functor $\calQ^{inf}_{n-1}.$
\end{proof}

Let $F_i$ be the facet containing $\alpha$ which separates the permutohedra $P_i$ and $P_{i-1}.$ Then from the above lemma we understand that the object of $(\Perf_k)_{\BZ/2}$ placed at $F_i$ is just $\calF(s_i),$ and at the codimension 2 intersection of $F_i$ and $F_{i+1}$ is the map $\calF(s_i)\leftarrow\calF(s_{i+1}).$

There is also a geometric way of understanding $\cQ_{n-1}^{inf}(\alpha)$ as a category of twisted complexes, for a face $\alpha$ in $\frH_{n-1}$: each of the distinguished generators $s_i$ of the category $\cQ_{n-1}^{inf}(\alpha),$ which are indexed by permutohedra $P_i$ containing $\alpha,$ corresponds to a brane in the Fukaya category which locally near $\alpha$ runs along the interior of the permutohedron $P_i$; the complexes $s_{i,j}$ correspond to branes which cross over to different permutohedra at $\alpha$.

The compatibility conditions mentioned in the approach using perfect modules correspond in this perspective to a list of the possible configurations which a brane can take locally at each face $\alpha.$ If $\alpha$ is a codimension $r$ face, so that it is contained in $r+1$ permutohedra $P_0,\ldots,P_r$, then there are $\frac{r(r+1)}{2}$ possible such configurations, corresponding to the objects $s_i$ and $s_{i,j}$ in the category $\calQ^{inf}(\alpha).$

\begin{example}[$n=2$]\label{ex:braneconfigs}
	Let $\alpha$ be a vertex in $\frH_1,$ which is shared by three hexagons $P_0,P_1,P_2.$ Then locally at $\alpha,$ there are three possible brane configurations $s_0,s_1,s_2$ (up to a shift, these are equivalent to $s_{1,2},s_{2,0},$ and $s_{0,1},$ respectively). These are illustrated in Figure~\ref{figure:braneconfigs}.
\begin{figure}[h]
\includegraphics[width=12cm]{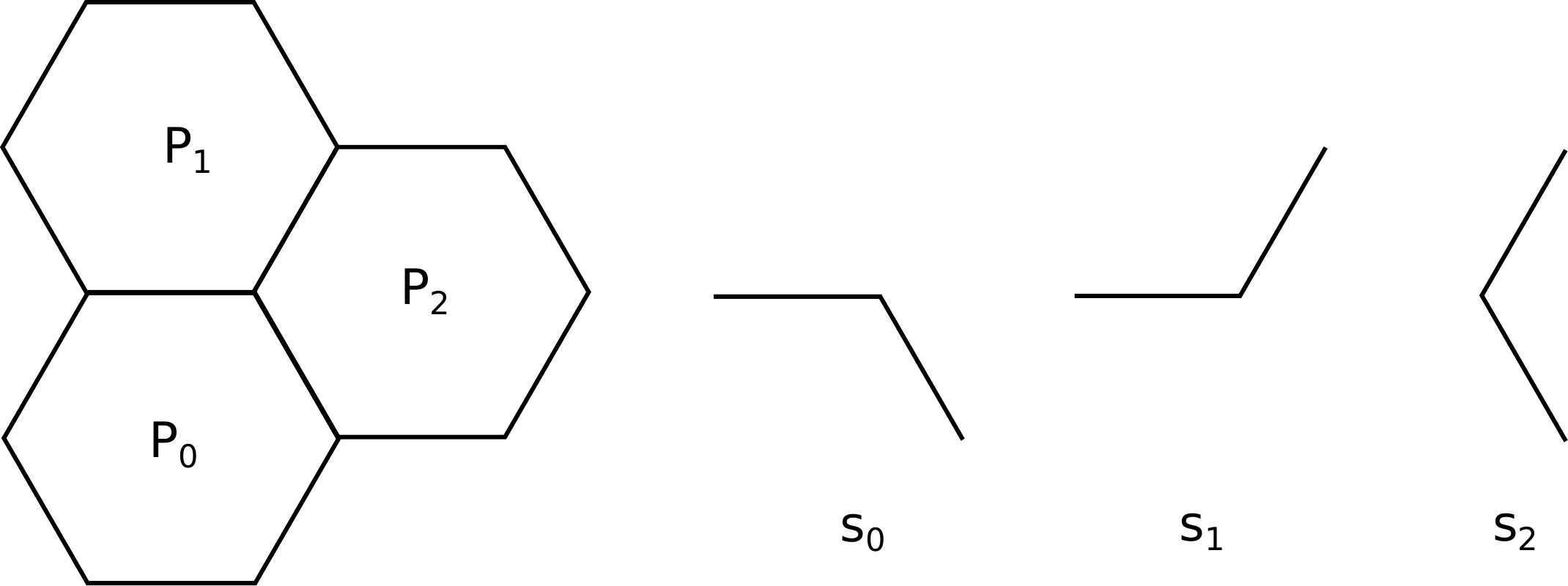}
\caption{A vertex $v$ in $\frH_1$ and the brane configurations corresponding to the generating objects $s_0,s_1,s_2$ of $\cQ^{inf}_{n-1}(v).$}
\label{figure:braneconfigs}
\end{figure}
\end{example}

One basic class of objects in the category $Q^{inf}_{n-1}$ are the ``microlocal rank-one" objects: these are the objects $\calF$ such that for every facet $\alpha$, the object $\calF_\alpha$ is equivalent in $\calQ^{inf}(\alpha)$ to either $s_0$ or $s_1$ (where these correspond to the two permutohedra separated by $\alpha$; cf. Remark~\ref{remark:Sindex}). The microlocal rank-one objects in the Fukaya category perspective are those objects which run along each facet in $\frH_{n-1}$ at most once.

\begin{defn}Let $\calP=\{P_i\}_{i\in I}$ be a set of permutohedra involved in the tiling of $V_{n}.$ Then the boundary $B=\partial (\bigcup_i P_i)$ is a subset of $\frH_{n-1}$ which is a union of strata. (We will occasionally denote $B$ by $\partial \calP.$) A \emph{rank-one brane along $\partial \calP$} is an object of $Q_{n-1}^{inf}$ whose support along each facet $\alpha$ in $B$ is equivalent to $s_0$, where $s_0$ is the generator of $\calQ_{n-1}^{inf}(\alpha)$ corresponding to the permutohedron in $\calP$ containing $\alpha$, and whose support along each facet $\alpha$ not in $B$ is zero. If such an object exists, it is necessarily unique, and we will denote it by $\calB_\calP.$
\end{defn}

\begin{example}\label{example:shersphere}
Let $\calP=\{P_0\}$ be a single permutohedron $P_0.$ Then the object $\calB_\calP$ exists: for any face $\alpha$ in $\frH_{n-1},$ its support $(\calB_\calP)_\alpha\in \cQ^{inf}_{n-1}(\alpha)$ is given by
$$
(\calB_\calP)_\alpha=\begin{cases}s_0& \alpha\in \partial P_0\\ 0&\alpha\notin \partial P_0.\end{cases}
$$
This object, which under the mirror symmetry equivalence presented later in this paper will map to the skyscraper sheaf at the origin of $\AA^{n+1},$ corresponds in the Fukaya category side to the immersed sphere whose endomorphisms were calculated by Sheridan in \cite{Sher1}. 
\end{example}

\begin{example}
Let $n=3$. The 3-permutohedron is the truncated octahedron, which has both hexagon and square facets. Let $\calP=\{P_0,P_0+\lambda_1+\lambda_2\}$ be a set of two permutohedra which share a single square facet. Then there does not exist a rank-one brane along $\partial\calP.$
\end{example}

The second example above shows that we need to institute an additional condition on the set $\calP$ in order to guarantee the existence of a rank-one brane along $\partial P.$ One such condition, which will be sufficient for our purposes, is given in the lemma below.

Recall that for every face $\alpha$ of a permutohedron, we have a cyclic set $\tilcalO(\alpha)$ of all permutohedra containing $\alpha.$

\begin{lemma}\label{lemma:mrank1}
For $\calP$ a set of permutohedra, write $\tilcalO_\calP(\alpha)\subset \tilcalO(\alpha)$ for the the subset of permutohedra in $\tilcalO(\alpha)$ which are contained in $\calP$.
If the subset $\tilcalO_\calP(\alpha)\subset\tilcalO(\alpha)$ is connected in the cyclic order on $\tilcalO(\alpha)$ for every face $\alpha$ in $\partial\calP,$ then the rank-one brane $\calB_\calP$ along $\partial\calP$ exists.
\end{lemma}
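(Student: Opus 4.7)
The plan is to build $\calB_\calP$ as a global section of the sheaf $\cQ^{inf}_{n-1}$, that is, as a compatible assignment of objects $(\calB_\calP)_\alpha \in \cQ^{inf}_{n-1}(\alpha) = \cC_{\tilcalO(\alpha)}$ at every face $\alpha$ of $\frH_{n-1}$, together with coherent restriction isomorphisms at each incidence $\alpha \subset \bar\beta$, and then to verify that the facet components are rank-one of the prescribed form.

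The hypothesis furnishes a canonical candidate at each face: when $\tilcalO_\calP(\alpha)$ is empty (so $\alpha$ lies outside $\bigcup\calP$) or exhausts $\tilcalO(\alpha)$ (so $\alpha$ lies strictly interior), set $(\calB_\calP)_\alpha := 0$; otherwise, writing $\tilcalO_\calP(\alpha) = \{P_a, P_{a+1}, \ldots, P_b\}$ as a cyclic arc, define $(\calB_\calP)_\alpha$ to be the arc twisted complex $s_{a,b} = (s_a \to s_{a+1} \to \cdots \to s_b)$ in $\cC_{\tilcalO(\alpha)}$, with differentials the canonical degree-$1$ morphisms $\alpha_k\colon s_k \to s_{k+1}$. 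The twisted-complex model is essential because the explicit formulas for $\cQ(i_k)$ recalled earlier in the text tell us directly how this object transforms under restriction.

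The heart of the proof is the compatibility check: for every incidence $\alpha \subset \bar\beta$, the restriction $\cQ^{inf}_{n-1}(\alpha) \to \cQ^{inf}_{n-1}(\beta)$ is the iterated dg quotient killing each generator $s_k$ with $P_k \in \tilcalO(\alpha) \setminus \tilcalO(\beta)$, and I need it to carry $(\calB_\calP)_\alpha$ to $(\calB_\calP)_\beta$. Three kinds of single-generator kills can arise in our arc complex: removing an index outside $[a,b]$ leaves the complex unchanged; removing an endpoint shortens the arc by one via the formulas $s_{a+1,b}$ or $s_{a,b-1}$; and removing an interior index collapses that portion of the complex through the nullhomotopy $\alpha_{k+1}\alpha_k \sim 0$, which renders the intermediate generator contractible in the quotient. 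Iterating, the output is the arc complex attached to $\tilcalO_\calP(\alpha) \cap \tilcalO(\beta) = \tilcalO_\calP(\beta)$, which by the connectedness hypothesis applied at $\beta$ is again an arc (collapsing to $0$ exactly when it is empty or full), matching our definition of $(\calB_\calP)_\beta$.

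The main obstacle is making the interior-kill step precise and assembling the restriction equivalences coherently for the $\infty$-categorical limit; both issues reduce to verifying functoriality of $\cQ$ along factorizations of inclusions into one-element inclusions, so the higher-coherence data are forced by universal properties. Once this is done, the rank-one brane conditions follow directly from a facet computation: restricting the arc complex to a facet $F$ whose two adjacent permutohedra are both in or both out of $\calP$ yields the zero object, while restriction to a facet with exactly one adjacent permutohedron in $\calP$ lies at an endpoint of the arc and returns the simple $s_a$ or $s_b$ corresponding to the permutohedron in $\calP$, exactly as required.
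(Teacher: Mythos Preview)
Your proposal is correct and follows the same route as the paper: assign to each face $\alpha$ the arc complex $s_{a,b}$ over the connected interval $\tilcalO_\calP(\alpha)=[P_a,\ldots,P_b]$, and zero otherwise. The paper's proof simply records this definition and leaves the restriction compatibility implicit; your interior-kill case follows directly from the formula $\cQ(i_k)(s_{a,b})=s_{a,b}$ for $a\neq k\neq b$ already given in the text, so the nullhomotopy aside (which is not the operative mechanism---it is the Drinfeld contracting homotopy on $s_k$ that does the work) can be dropped.
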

\begin{proof} We can define the object $\calB_\calP$ as follows: at any face $\alpha$ not in $\partial\calP$, we set $(\calB_\calP)_\alpha=0.$ At any face $\alpha$ in $\partial\calP$, denote the cyclic set $\tilcalO(\alpha)$ of permutohedra containing $\alpha$ by $[P_0,\ldots,P_r],$ and let $[P_i,P_{i+1},\ldots,P_j]$ denote the cyclic subset $\tilcalO_\calP(\alpha)$ of permutohedra contained in $\calP$. By assumption, this set is connected in the cyclic order on $\tilcalO(\alpha),$ and hence in $\calQ_{n-1}^{inf}(\alpha)$ we can define the complex
$$
\xymatrix{
s_{i,j}:=(s_i\ar[r]&s_{i+1}\ar[r]&\cdots\ar[r]&s_j)
},
$$
and we set $(\calB_\calP)_\alpha=s_{i,j}.$
\end{proof}

We would like to give a similarly explicit description of the category $Q^{wr}_{n-1},$ but the definition above is not well-suited to describing objects of this category, for the reason that colimits of dg categories are more difficult to present than limits are. In order to understand this colimit, we cite from \cite{Ga} the following useful trick, originally due to Jacob Lurie:

\begin{lemma}[{\cite[Lemma 1.3.3]{Ga}}] Let $P$ be a category and $F:P^{op}\to \on{St}^L_k$ a functor to the category of cocomplete $k$-linear dg categories and continuous functors. Let $G:P\to \on{St}_k$ be a functor to the category of cocomplete $k$-linear dg categories which agrees with $F$ on objects and such that $G(\alpha\to \beta)$ is right adjoint to $F(\alpha\to \beta).$ Then there is an equivalence $\on{colim}_{P^{op}}F\cong \lim_{P}G.$
\end{lemma}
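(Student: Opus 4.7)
The plan is to invoke the standard duality between presentable categories and their adjoints, adapted to the dg setting. Recall that passing to right adjoints determines an equivalence of $(\infty,1)$-categories $\on{St}^L_k \simeq (\on{St}^R_k)^{\op}$, where $\on{St}^L_k$ is the category of cocomplete $k$-linear dg categories with continuous (equivalently, left adjoint) functors, and $\on{St}^R_k$ is the category with the same objects but with morphisms the continuous right adjoints. This is the dg analogue of the classical $\on{Pr}^L \simeq (\on{Pr}^R)^{\op}$ duality from Higher Topos Theory.

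First, I would apply this equivalence pointwise to the diagram $F: P^{\op} \to \on{St}^L_k$ to obtain a diagram $F^R: P \to \on{St}^R_k$ which agrees with $F$ on objects and sends each arrow $\alpha \to \beta$ in $P$ to the right adjoint of $F(\alpha \to \beta): F(\beta) \to F(\alpha)$. By the hypothesis that $G(\alpha \to \beta)$ is right adjoint to $F(\alpha \to \beta)$ (and since the right adjoint, when it exists, is unique up to a contractible space of choices), the functor $F^R$ coincides with $G$ after composing with the forgetful functor $\on{St}^R_k \to \on{St}_k$.

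Next, under the equivalence $\on{St}^L_k \simeq (\on{St}^R_k)^{\op}$, the colimit of $F$ over $P^{\op}$ in $\on{St}^L_k$ corresponds to the limit of $F^R$ over $P$ in $\on{St}^R_k$. Finally, I would invoke the standard fact that the forgetful functor $\on{St}^R_k \to \on{St}_k$ preserves limits (the same argument as for $\on{Pr}^R$: limits of presentable categories with right-adjoint transition functors are created by the underlying limits of $\infty$-categories, since a compatible family of right adjoints automatically assembles into a right adjoint on the limit). Combining these identifications yields
$$
\colim_{P^{\op}} F \;\simeq\; \lim_P F^R \;\simeq\; \lim_P G,
$$
as desired.

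The only real content here is the $\on{St}^L_k \simeq (\on{St}^R_k)^{\op}$ duality; this is the main obstacle, but only in the sense that one needs to verify it holds in whichever homotopical model for pretriangulated dg categories one has chosen. In Toën's model, or equivalently for $Hk$-linear stable $(\infty,1)$-categories as invoked in the paper's conventions, it follows by the same argument as in the $(\infty,1)$-categorical version, using the adjoint functor theorem to produce the needed right adjoints from continuous functors between presentable categories. Once this duality is in place, the proof amounts to bookkeeping of adjoints.
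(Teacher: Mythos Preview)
The paper does not supply a proof of this lemma: it is cited directly from \cite[Lemma 1.3.3]{G} (and attributed there to Lurie), so there is no argument in the paper to compare against. Your proof is correct and is exactly the standard argument for this fact --- the $\on{St}^L_k \simeq (\on{St}^R_k)^{\op}$ duality together with the fact that the forgetful functor $\on{St}^R_k \to \on{St}_k$ creates limits is precisely how the result is established in Gaitsgory's notes and ultimately in Lurie's \emph{Higher Topos Theory} for the presentable $(\infty,1)$-categorical version.
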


By construction, $\calQ_{n-1}^{inf}:P(\frH_{n-1})\to \BZ/2\on{-dgst}_k$ agrees with $\calQ^{wr}_{n-1}$ on objects and $\calQ_{n-1}^{inf}(\alpha\to\beta)$ is right adjoint to $\calQ^{wr}_{n-1}(\alpha\to\beta),$ so we are almost in the situation in the lemma. However, the functors $\calQ_{n-1}^{inf}$ and $\calQ_{n-1}^{wr}$ as defined have codomain all dg categories and not just cocomplete dg categories. We can rectify this by passing to Ind-completions.

Let $\cQ^{\Diamond}$ be the functor defined the same way as the functor $\cQ$, except that its values on objects are equivalent to $(A_n \on{-Mod}_k)_{\BZ/2}$ instead of $(A_n \on{-Perf}_k)_{\BZ/2};$ that is, we allow complexes of any dimension, with no restriction to perfect complexes. Following the procedure by which we defined $Q^{inf}_{n-1},$ we produce in the same way a category $Q_{n-1}^\Diamond,$ which is similar to $Q^{inf}_{n-1}$ but allows infinite-rank stalks along facets. We are now in a position to apply the above lemma.

\begin{cor} The category $\cQ^{wr}_{n-1}$ is equivalent to the category of compact objects in $Q_{n-1}^\Diamond.$
\end{cor}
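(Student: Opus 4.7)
The plan is to apply the Lurie trick (the lemma just cited from \cite{G}) after passing to ind-completions, and then use compatibility of compact objects with colimits in presentable categories.

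First, I would set up the ind-completed versions. The functors $\calQ^{inf}_{n-1}$ and $\calQ^{wr}_{n-1}$ take values in $\BZ/2\on{-dgst}_k$ rather than in cocomplete dg categories, so the Lurie lemma does not apply directly. However, replacing $(A_n\on{-Perf}_k)_{\BZ/2}$ by $(A_n\on{-Mod}_k)_{\BZ/2}$ (the formal construction $\cQ^\Diamond$ already introduced just before the corollary) gives ind-completed versions $\calQ^{\Diamond,wr}_{n-1}: P(\frH_{n-1})^{\op} \to \on{St}^L_k$ and $\calQ^{\Diamond,inf}_{n-1}: P(\frH_{n-1}) \to \on{St}_k$, valued in cocomplete $k$-linear dg categories, which agree on objects. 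The restriction maps of the sheaf $\calQ^{\Diamond,inf}_{n-1}$ are the dg quotient functors, and these are precisely the right adjoints of the corresponding left-adjoint corestriction maps of the cosheaf $\calQ^{\Diamond,wr}_{n-1}$ (the left adjoints exist in the ind-completed setting because dg quotients of compactly generated dg categories preserve all limits).

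Next, Lurie's lemma applies to give the equivalence
\[
\on{colim}_{P(\frH_{n-1})^{\op}} \calQ^{\Diamond,wr}_{n-1} \;\simeq\; \lim_{P(\frH_{n-1})} \calQ^{\Diamond,inf}_{n-1}.
\]
The right-hand side is by definition $Q^{\Diamond}_{n-1}$.

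Finally, I would use that compact objects interact well with colimits of compactly generated presentable categories along left adjoints preserving compacts. Each $\calQ^{\Diamond,wr}_{n-1}(\alpha)$ is compactly generated with compact objects $\calQ^{wr}_{n-1}(\alpha)$, and the structure maps (being dg quotients on the ind-completed side, or equivalently left adjoints of dg quotients) send compacts to compacts. Therefore
\[
\Bigl(\on{colim}_{P(\frH_{n-1})^{\op}} \calQ^{\Diamond,wr}_{n-1}\Bigr)^c \;\simeq\; \on{Idem}\Bigl(\on{colim}_{P(\frH_{n-1})^{\op}} \calQ^{wr}_{n-1}\Bigr) \;=\; Q^{wr}_{n-1},
\]
the first equivalence being the standard fact that the compacts of an ind-colimit of compactly generated presentable categories are the idempotent completion of the small colimit. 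Combining this with the previous display yields $Q^{wr}_{n-1} \simeq (Q^{\Diamond}_{n-1})^c$.

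The main obstacle I anticipate is verifying the adjunction hypothesis cleanly: one needs to check that the left adjoints to the structure maps of $\calQ^{\Diamond,inf}_{n-1}$ exist and that, after ind-completion, they genuinely realize $\calQ^{\Diamond,wr}_{n-1}$ as the cosheaf defined via $\cQ^{wr} = \cQ^L$. This is a compatibility between the adjoint construction at the level of the functor $\cQ: \Lambda_{inj}^{\op} \to \BZ/2\on{-dgst}_k$ and its extension to module categories, but since the functors $\cQ(i_k)$ are dg quotients by explicit full subcategories generated by compact objects, passing to ind-completions commutes with taking left adjoints in a standard way.
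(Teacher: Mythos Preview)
Your proposal is correct and follows essentially the same route as the paper: ind-complete the diagram, apply the Lurie lemma to identify the large colimit with $Q_{n-1}^\Diamond$, then pass to compact objects using that $\on{Ind}$ commutes with the small colimit so that compacts recover the idempotent completion of $\colim \calQ^{wr}_{n-1}$. The paper is slightly terser about the adjunction hypothesis you flag as an obstacle, simply asserting that $\on{Ind}\circ\calQ^{wr}_{n-1}$ and $\on{Ind}\circ\calQ^{inf}_{n-1}=\calQ^\Diamond_{n-1}$ satisfy the conditions of the lemma, but your more explicit justification via dg quotients is exactly what underlies this.
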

\begin{proof}The functors $\on{Ind}\circ \cQ^{wr}_{n-1}$ and $\on{Ind}\circ\cQ^{inf}_{n-1}=\cQ^{\Diamond}_{n-1}$ satisfy the conditions of the above lemma, which thus provides an equivalence $\colim_{P(\frH_{n-1})^{op}}(\on{Ind}\circ\cQ_{n-1}^{wr})\cong Q^\Diamond_{n-1}$ between their respective colimit and limit. Passing to the full subcategory of compact objects on each side turns this into an equivalence
$$\xymatrix{
\left(\colim_{P(\frH_{n-1})^{op}}(\on{Ind}\circ\cQ_{n-1}^{wr})\right)^{cpt}\cong (Q^\Diamond_{n-1})^{cpt}.
}$$
Since the Ind-completion commutes with the colimit, the left-hand side of this equivalence is $\left(\on{Ind}(\colim_{P(\frH_{n-1})^{op}}\cQ_{n-1}^{wr})\right)^{cpt}$,
which is just the idempotent-completion of the category $\colim_{P(\frH_{n-1})^{op}}\calQ_{n-1}^{wr}.$
By definition, this latter category is $Q_{n-1}^{wr}.$
\end{proof}

We can use the above lemma to give an explicit description of the generators of $\cQ^{wr}_{n-1}.$ For $F$ a facet in $\frH_{n-1}$ and $\xi$ a choice of normal direction to $F$, consider the map
$$\xymatrix{
\phi_{F,\xi}:Q_{n-1}^\Diamond\ar[r]&\cQ^{\Diamond}_{n-1}(F)\ar[r]^-\xi_-\sim&(\Mod_k)_{\BZ/2}
}
$$
which takes an object of $Q_{n-1}^\Diamond$ to the object of the category $\cQ^{\Diamond}_{n-1}(F)$ (which $\xi$ identifies with $(\Mod_k)_{\BZ/2}$) which is placed at the facet $F$ of $\frH_{n-1}.$ If $\xi_{\pm}$ are the two choices of normal to $F$, then the resulting functors agree up to a shift: $\phi_{F,\xi_+}=\phi_{F,\xi_-}[1].$ (Note that a choice of $\xi$ is equivalent to a choice of one of the two permutohedra containing the facet $F.$ If $P$ is a choice of one of these permutohedra, we will occasionally denote the corresponding functor by $\phi_{F,P}.$)

Since the functor $\phi_{F,\xi}$ preserves products, it admits a left adjoint $\phi_{F,\xi}^\ell:{\Mod_k}\to Q_{n-1}^\Diamond,$ and since it preserves coproducts, $\phi_{F,\xi}^\ell$ preserves compact objects. Hence, if we define $\delta_{F,\xi}$ to be $\phi_{F,\xi}^\ell(k),$ then $\delta_{F,\xi}$ is an object of $Q^{wr}_{n-1}$ which by construction corepresents the functor $\phi_{F,\xi}.$
\begin{defn} For $F$ a facet in $\frH_{n-1},$ the functor $\phi_{F,\xi}$ is a \emph{stalk functor along $F$}. The object $\delta_{F,\xi}$ in $Q^{wr}_{n-1}$ corepresenting $\phi_{F,\xi}$ is a \emph{skyscraper along $F$}.
\end{defn}

It will be useful to restrict our attention to the maximal facets in the honeycomb $\frH_{n-1}$. Recall that the facets of the permutohedron $\calP_n$ are of the form $\calP_{n-k}\times\calP_k$ for $k=1,\ldots,n,$ and that we call ``maximal facets" the facets of the form $\calP_{n-1}.$ Equivalently, these are the facets which are shared by a pair of permutohedra $P$ and $P+\lambda_i$ for some $i$. In the description of $Q_{n-1}^{inf}$ using perfect modules, if an object $\calF_v:\langle s_0,\ldots,s_n\rangle^{op}\to(\Perf_k)_{\BZ/2}$ is placed at a vertex $v$, then $\calF(s_i)$ are its stalks along the $n+1$ maximal facets containing $v.$ We can use this to establish the following lemma:

\begin{lemma} The category $Q^{wr}_{n-1}$ is generated by the set $\{\delta_{F,\xi}\}$ of skyscrapers along maximal facets of $\frH_{n-1}.$
\end{lemma}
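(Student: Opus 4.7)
The plan is to reduce the statement to the conservativity of the joint stalk functor
$$\prod_{F,\xi}\phi_{F,\xi}\colon Q^{\Diamond}_{n-1}\longrightarrow \prod_{F,\xi}(\Mod_k)_{\BZ/2}$$
indexed by the maximal facets $F$ of $\frH_{n-1}$ and their two codirections $\xi$. By construction $\delta_{F,\xi}=\phi_{F,\xi}^{\ell}(k)$ corepresents $\phi_{F,\xi}$, so $\Hom(\delta_{F,\xi},X)\simeq\phi_{F,\xi}(X)$; conservativity of the joint functor is then equivalent to the family $\{\delta_{F,\xi}\}$ generating $Q^{\Diamond}_{n-1}$ as a cocomplete $\BZ/2$-dg category. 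Since each $\delta_{F,\xi}$ is already compact in $Q^{\Diamond}_{n-1}$, passing to compacts on both sides upgrades this to generation of $Q^{wr}_{n-1}=(Q^{\Diamond}_{n-1})^{cpt}$ as a pretriangulated idempotent-complete $\BZ/2$-dg category.

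To verify conservativity, suppose $X\in Q^{\Diamond}_{n-1}$ satisfies $\phi_{F,\xi}(X)=0$ for all maximal facets $F$ and orientations $\xi$. I will first show $X_v=0$ at every vertex $v$. Cyclically list the $n+1$ permutohedra meeting $v$ as $P_0,\ldots,P_n$ and let $F_{i,i+1}$ denote the maximal facet through $v$ separating $P_i$ from $P_{i+1}$. In the perfect-modules description of $\cQ^{\Diamond}(\tilcalO_{n-1}(v))$, the object $X_v$ is determined by the values $X_v(s_i)\in(\Mod_k)_{\BZ/2}$ together with the connecting structure maps. Specializing the restriction formula for $\calF|_F$ displayed earlier in this subsection to the adjacent case $j=i+1$ identifies $\phi_{F_{i,i+1},P_i}(X)\simeq X_v(s_i)$ (the ``twisted complex'' side collapses because the indices run from $i$ to $i$). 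The hypothesis then forces $X_v(s_i)=0$ for every $i$, so $X_v=0$.

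To upgrade from vertices to arbitrary faces $\alpha$, pick any vertex $v$ in the closure of $\alpha$; the morphism $v\to\alpha$ in $P(\frH_{n-1})$ corresponds to the inclusion $\tilcalO_{n-1}(\alpha)\hookrightarrow\tilcalO_{n-1}(v)$ of cyclic sets, and the induced structure map $\cQ^{\Diamond}(v)\to\cQ^{\Diamond}(\alpha)$ is, by the explicit description of $\cQ$ on injections, the dg quotient by the full subcategory on the generators indexed by $\tilcalO_{n-1}(v)\setminus\tilcalO_{n-1}(\alpha)$. In the limit presentation of $Q^{\Diamond}_{n-1}$, this map carries $X_v$ to $X_\alpha$, so vanishing of all $X_v$ propagates to vanishing of $X$ on every face. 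The only mild subtlety in the argument is the combinatorial identification $\phi_{F_{i,i+1},P_i}(X)\simeq X_v(s_i)$, which is precisely the observation preceding the lemma that the $n+1$ maximal facets at $v$ correspond to the $n+1$ generators $X_v(s_i)$; once that is in hand, the rest is formal from the adjunction defining $\delta_{F,\xi}$ and the sheaf presentation of $Q^{\Diamond}_{n-1}$.
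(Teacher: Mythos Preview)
Your proof is correct and follows the same approach as the paper: both argue that the joint stalk functor along maximal facets is conservative on $Q^\Diamond_{n-1}$, whence the corepresenting skyscrapers generate, and then pass to compact objects. The paper's proof is very brief (relying on the observation just before the lemma that the values $\calF(s_i)$ at a vertex are precisely the stalks along the $n+1$ incident maximal facets), while you spell out in more detail the vertex-level identification $\phi_{F_{i,i+1},P_i}(X)\simeq X_v(s_i)$ and the propagation from vertices to higher-dimensional faces via the quotient description of the restriction maps.
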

\begin{proof}
An object of $Q_{n-1}^\Diamond$ is zero if and only if its stalks along all maximal facets $F$ are zero, which shows that the set of skyscrapers generates $Q_{n-1}^\Diamond.$ Since the category $Q_{n-1}^\Diamond$ is the Ind-completion of its compact objects $Q^{wr}_{n-1}$, the skyscrapers generate $Q^{wr}_{n-1}$.
\end{proof}
%

\begin{remark}For a non-simply-connected symplectic manifold, the category of wrapped microlocal sheaves (modeled here by $Q^{wr}$) lacks the necessary finiteness conditions to embed into the category of infinitesimally wrapped microlocal sheaves (modeled by $Q^{inf}$); instead, both are contained inside a larger category $Q^\Diamond.$ However, passing from $\frH_{n-1}/\Lambda_n$ to its covering space $\frH_{n-1}$ unwraps branes: consider for instance the toy case $\RR\to S^1,$ in which a brane wrapping $S^1$ countably many times might run only once along the universal cover. We might thus expect that the objects in $Q^\Diamond$ which corepresent stalk functors have sufficient finiteness to live inside $Q^{inf}_{n-1}.$ This turns out to be the case, as we will see below.
\end{remark}

The following collection of objects of $Q_{n-1}^{\inf}$ will play an important r\^ole in the proof of the main mirror symmetry equivalence of this paper.

\begin{defn}Let $P$ be a permutohedron in $V_n$, and let $J\subsetneq\{1,\ldots,n+1\}$ be a proper subset. Set $\calP=\{P+\sum_{j\in J} n_j\lambda_j\mid n_j\in \NN\}.$ The hypothesis of Lemma \ref{lemma:mrank1} is satisfied, so this choice of $\calP$ defines a rank-one brane $\calB_\calP,$ which we will denote by $\calB_{P,J}.$
\end{defn}

\begin{figure}[h]
\includegraphics[width=8cm]{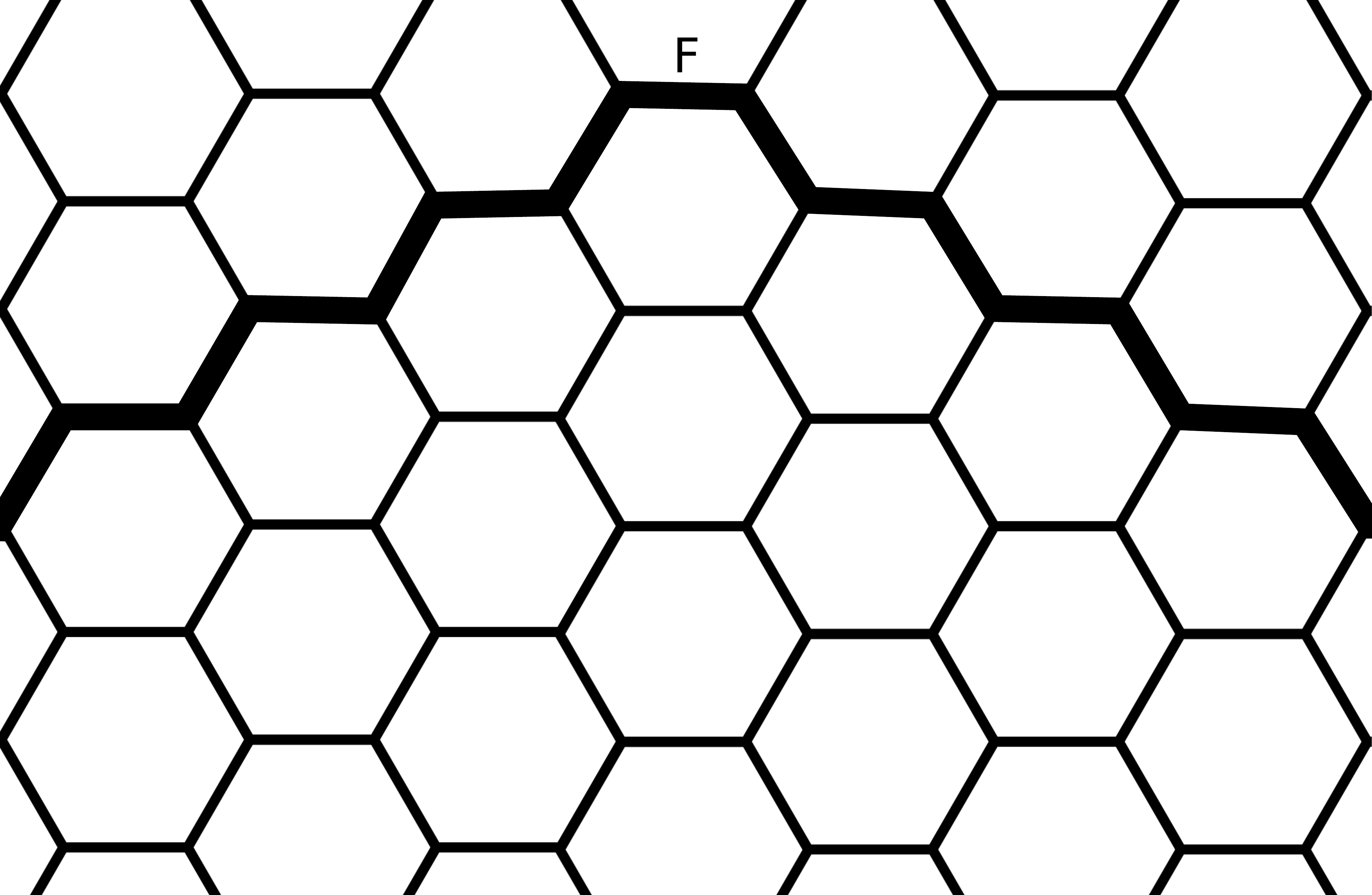}
\caption{The support (in bold) of the skyscraper $\delta_{F,P}=\calB_{P,\{2,3\}}$ along the edge $F.$}
\label{fig:sky}
\end{figure}

The object $\calB_{P,\emptyset}$ is the brane wrapping a single permutohedron, discussed in Example~\ref{example:shersphere} above. At the opposite extreme, in the case where $J=\{i\}^c=\{1,\ldots,n+1\}\setminus\{i\},$ we get a skyscraper:

\begin{prop}\label{prop:skyscrapers}
Let $P$ be a permutohedron in $V_n$ and let $F_i$ be the facet separating $P$ from $P+\lambda_i.$ Then the rank-one object $\calB_{P,\{i\}^c}$ defined above is the skyscraper $\delta_{F_i,P}.$
\end{prop}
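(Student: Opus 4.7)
The plan is to show that the skyscraper $\delta_{F_i,P}$ is itself a rank-one brane along $\partial\calP$ for $\calP=\{P+\sum_{j\in\{i\}^c}n_j\lambda_j\mid n_j\in\NN\}$, and then invoke the uniqueness statement for rank-one branes to conclude $\delta_{F_i,P}\simeq \calB_{P,\{i\}^c}$.

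I would first construct the canonical comparison map. By construction of the rank-one brane, the stalk $\phi_{F_i,P}(\calB_{P,\{i\}^c})$ is canonically $k$: indeed, $P$ is the unique permutohedron of $\calP$ adjacent to $F_i$, the other adjacent permutohedron $P+\lambda_i$ being outside $\calP$. Via the adjunction $\Hom(\delta_{F_i,P},-)\simeq \phi_{F_i,P}(-)$, this class produces a canonical morphism
\[
\eta\colon \delta_{F_i,P}\longrightarrow \calB_{P,\{i\}^c}.
\]
The proposition is then equivalent to the statement that $\delta_{F_i,P}$ has stalk $k$ at each facet of $\partial\calP$ (in the direction of the adjacent permutohedron of $\calP$) and $0$ at all other facets, for then it is a rank-one brane along $\partial\calP$ and $\eta$ is a map of rank-one branes along the same support, hence an equivalence.

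I would compute these stalks $\phi_{F',\xi'}(\delta_{F_i,P})=\Hom_{Q^\Diamond_{n-1}}(\delta_{F',\xi'},\delta_{F_i,P})$ locally. At any vertex $v\in\frH_{n-1}$ with cyclic set of incident permutohedra $\tilcalO(v)=[P_0,\ldots,P_n]$, the local category $\cQ^{inf}_{n-1}(v)=\calC_{\tilcalO(v)}$ is generated by $s_0,\ldots,s_n$, with the corestrictions $\cQ(i_k)$ to higher-dimensional faces given by dg quotients and their left adjoints $\cQ(i_k)^L$ by the explicit inclusions of subcategories of twisted complexes $s_{a,b}$ recorded in the discussion preceding Definition~\ref{definition:cyc} and in the construction of $\cQ$. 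Using these formulas, I would propagate the known stalk of $\delta_{F_i,P}$ at $F_i$ outward through the honeycomb by induction on combinatorial distance inside $\overline\calP$, tracking the active generator $s_a$ as it moves through the cyclic subarc $\tilcalO_{\calP}(v)\subset \tilcalO(v)$, which is connected at every $v$ by the argument of Lemma~\ref{lemma:mrank1}.

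The main obstacle is the vanishing claim: the stalk of $\delta_{F_i,P}$ must vanish at every facet \emph{not} in $\partial\calP$. This is not formal from the propagation alone; it requires a cancellation coming from the shape of the left adjoints $\cQ(i_k)^L$. Concretely, the propagation produces twisted complexes $s_{a,b}$, and these represent the zero object in $\cQ^{wr}$ precisely when the index range $[a,b]$ has grown to exhaust the cyclic subarc $\tilcalO_\calP(v)$, equivalently precisely when the propagation has crossed the boundary of $\partial\calP$. Making this final combinatorial claim precise in terms of the left adjoints of the quotient functors on the categories $\calC_S$ is the main technical step; combined with Lemma~\ref{lemma:mrank1} it identifies the stalks of $\delta_{F_i,P}$ with those of $\calB_{P,\{i\}^c}$ and therefore completes the proof.
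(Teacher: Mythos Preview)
Your approach is dual to the paper's, and the duality conceals a real gap. The paper does not attempt to compute the stalks of the abstractly defined $\delta_{F_i,P}$; instead it verifies directly that the explicit object $\calB_\calP$ corepresents $\phi_{F_i,P}$. Concretely, it writes $\Hom_{Q^{inf}_{n-1}}(\calB_\calP,\calG)$ as a limit over $P(\frH_{n-1})$ of the local Homs $\Hom_{\calQ(\alpha)}((\calB_\calP)_\alpha,\calG_\alpha)$, restricts the limit to the support $B=\partial\calP$ (outside of which $(\calB_\calP)_\alpha=0$), identifies $\lim_{P(B)}\calQ^{inf}_{n-1}$ with modules over an explicit acyclic quiver $(Q_B,R_B)$ having $F_i$ as its initial vertex, and observes that $\calB_\calP$ becomes the constant representation with value $k$, which corepresents evaluation at that initial vertex. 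This is a computation with an object that is already given stalkwise.

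Your plan, by contrast, tries to compute the components of $\delta_{F_i,P}=\phi_{F_i,P}^\ell(k)$ in the limit presentation, and the ``propagation'' you describe does not accomplish this. The local formulas for $\calQ(i_k)$ and $\calQ(i_k)^L$ govern the (co)restriction maps of the (co)sheaf $\calQ^{wr}_{n-1}$, but the component of $\delta_{F_i,P}$ at a face $\alpha$ is the value at $\alpha$ of the left adjoint to the \emph{global} projection $Q^\Diamond_{n-1}\to\calQ^\Diamond_{n-1}(F_i)$, not a composite of local left adjoints along some chosen path from $F_i$ to $\alpha$. There is no inductive mechanism by which the stalk at one facet determines the stalk at an adjacent one: different paths through the honeycomb would produce different twisted complexes $s_{a,b}$, and it is precisely the global limit condition that reconciles them. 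Both the vanishing outside $\partial\calP$ that you flag and the rank-one claim on $\partial\calP$ itself require this global input, which your outline does not supply. The paper's route---starting from the explicit object and checking the universal property via the quiver $(Q_B,R_B)$---is what replaces the missing global argument.
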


\begin{proof}
Let $\calP$ be as in the definition of $\calB_{P,\{i\}^c}.$ As usual we will denote by $B$ the boundary $B=\partial(\bigcup_{P\in\calP}P).$
To show that the object $\calB_\calP$ is isomorphic to the skyscraper $\delta_{F_i,P},$ we need to define an equivalence
$$
\xymatrix{
\Hom_{Q_{n-1}^{inf}}(\calB_\calP,\calG)\ar[r]^-\sim &\Hom_{Q_{n-1}^{inf}}(\delta_{F_i,P},\calG)\cong\phi_{F_i,P}(\calG)
}
$$
which is natural in $\calG.$

If we define a functor
$$
\xymatrix{
h:P(\frH_{n-1})\ar[r]&(\Perf_k)_{\ZZ/2},&
\alpha\ar@{|->}[r]^-h&\Hom_{\calQ(\alpha)}((\calB_\calP)_\alpha,\calG_\alpha),
}
$$
then we have an equivalence
$$
\xymatrix{
\Hom_{Q_{n-1}^{inf}}(\calB_\calP,\calG)\cong\lim_{P(\frH_{n-1})}h,
}
$$
so it would be enough to show that this limit is naturally equivalent to 
$\phi_{F_i,P}(\calG).$ We will calculate this limit by making a series of simplifications until we arrive at the desired result. Heuristically, we will see that after restricting to the support of $B$, the calculation we want can be understood in a category of representations of a certain acyclic quiver.

First, let $P(B)\subset P(\frH_{n-1})$ be the full subposet on faces contained in $B.$ Note that $h(\beta)=0$ for any $\beta\notin P(B),$ and there are no maps $\beta\to\alpha$ for $\alpha\in P(B),\beta\notin P(B).$ Hence the natural map
$$
\xymatrix{
\lim_{P(\frH_{n-1})}h\ar[r]&\lim_{P(B)}h,
}
$$
is an equivalence.

But this latter limit is just the Hom space
$$
\xymatrix{
\lim_{P(B)}h\cong\Hom_{\lim_{P(B)}\calQ_{n-1}^{inf}}(\calB_\calP|_{P(B)},\calG|_{P(B)})
}
$$
of the objects $\calB_\calP$ and $\calG$ after restriction to the category $\lim_{P(B)}\calQ_{n-1}^{inf},$ so we need to compute this latter category, which is equivalent to the category of modules over a certain quiver with relations $(Q_B,R_B)$.

Let $Q_B$ be the quiver with one vertex for every facet in $B$ and one arrow for every codimension 2 face in $B$, with the direction of the arrows determined as in the description of $Q_{n-1}^{inf}$ by perfect modules. For $k>2,$ the two paths around each codimension $k$ face form a non-oriented $k$-cycle, and we add to $R_B$ the relation that these two paths commute.

Then $\lim_{P(B)}\calQ_{n-1}^{inf}$ is equivalent to the category of modules over the quiver $Q_B$ with relations $R_B$.
The quiver representation corresponding to an object $\calF$ in $\lim_{P(B)}\calQ_{n-1}^{inf}$ has at the vertex of $Q_B$ corresponding to the facet $\alpha$ the perfect complex $\phi_{\alpha,\xi}(\calF),$ where $\xi$ is the normal direction along $\alpha$ which points into $\calP$ and $\phi_{\alpha,\xi}$ is the corresponding stalk functor.

The quiver $Q_B$ is a connected quiver with an initial vertex, corresponding to the facet $F_i$; it has no oriented cycles, and any two paths with the same start and endpoint are forced by a relation in $R_B$ to agree. By construction, the object $\calB_\calP$ is mapped by this equivalence to the quiver representation with $k$ placed at every vertex, and every map an isomorphism. This object corepresents the functional on $(Q_B,R_B)$-mod sending a quiver representation to the object of $(\Perf_k)_{\ZZ/2}$ placed at the initial vertex. Hence we have an isomorphism
$$
\xymatrix{
\Hom_{(Q_B\on{-Perf}_k)_{\ZZ/2}}(\calB_\calP|_{P(B)},\calG|_{P(B)})\cong \phi_{F_i,P}(\calG).
}
$$

Composing all of the above equivalences, we conclude that the object $\calB_{\calP}$ corepresents the stalk functor $\phi_{F_i,P},$ as claimed.
\end{proof}

We will also need the following fact about the objects $\calB_{P,J}$, which expresses how they can be built out of one another:
\begin{lemma}\label{lemma:colimrels}Let $P$ be a permutohedron in $V_n,$ let $J'\subset \{1,\ldots,n+1\}$ be a proper subset, and suppose that $J'=J\setminus\{i\}$ for some $i$ and some proper subset $J\subset\{1,\ldots,n+1\}$. Then in $Q^{inf}_{n-1}$ there exists a map $x_i:\calB_{P,J}\to\calB_{P,J}\langle\lambda_i\rangle$ and an isomorphism of complexes
$$\xymatrix{
\calB_{P,J'}\cong(\calB_{P,J}\langle-\lambda_i\rangle\ar[r]^-{x_i}&\calB_{P,J}).
}$$
\end{lemma}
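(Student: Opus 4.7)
My plan is to construct $x_i$ by a local prescription on stalks, compute the mapping cone facet-by-facet, and then invoke the uniqueness of rank-one branes (Lemma \ref{lemma:mrank1}) to identify the cone with $\calB_{P,J'}$. The key geometric input is the identity $\partial\calP_J \,\Delta\, \partial(\calP_J+\lambda_i) = \partial\calP_{J'}$, where $\Delta$ denotes symmetric difference. Since $i \in J$ we have $\calP_J+\lambda_i \subsetneq \calP_J$ with complement $\calP_J \setminus (\calP_J+\lambda_i) = \calP_{J'}$; sorting each facet according to which of the three regions $\calP_J+\lambda_i$, $\calP_{J'}$, and the complement of $\calP_J$ its two adjacent permutohedra fall into, one checks directly that $\partial\calP_J \cap \partial(\calP_J+\lambda_i)$ consists exactly of facets separating $\calP_J+\lambda_i$ from the complement of $\calP_J$, and that the remaining facets of the two boundaries account precisely for $\partial\calP_{J'}$.

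Next, I define $x_i\colon \calB_{P,J} \to \calB_{P,J}\langle\lambda_i\rangle$ locally: on facets in the overlap $\partial\calP_J \cap \partial(\calP_J+\lambda_i)$, both branes have canonically compatible rank-one stalks and I set $x_i$ to be the identity; on the other facets of $\partial\calP_J$, I set $x_i = 0$. The main task is to verify that these stalk maps assemble into a morphism in $Q_{n-1}^{inf}$. At each face $\alpha\in P(\frH_{n-1})$, both branes correspond to twisted complexes in $\calC_{\tilcalO(\alpha)}$ supported on the nested connected cyclic subsets $\tilcalO_{\calP_J+\lambda_i}(\alpha) \subset \tilcalO_{\calP_J}(\alpha)$; the claim is that $x_i|_\alpha$ is the canonical morphism of twisted complexes induced by this inclusion of cyclic subsets, which is automatically compatible with the restriction functors of $\cQ$ by construction.

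To conclude, I compute the cone of $\calB_{P,J}\langle -\lambda_i \rangle \xrightarrow{x_i} \calB_{P,J}$ stalkwise: on facets in the overlap the map is an isomorphism and the cone vanishes, while on facets in the symmetric difference $\partial\calP_{J'}$, exactly one of the two terms contributes a rank-one stalk, and the $\ZZ/2$-grading selects a consistent orientation once a normal direction is fixed. The cone is therefore a rank-one brane supported on $\partial\calP_{J'}$, which by Lemma \ref{lemma:mrank1} is (uniquely) isomorphic to $\calB_{P,J'}$. The main obstacle is the descent verification for $x_i$ at codimension-$\geq 2$ strata: this is a combinatorial compatibility between the cyclic-order description of the local categories $\cQ(\alpha)$ and the morphism-of-twisted-complexes interpretation of $x_i$, which requires careful tracking of orientations but introduces no essential difficulty.
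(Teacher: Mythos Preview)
Your approach is genuinely different from the paper's, and worth contrasting. The paper does \emph{not} build $x_i$ stalkwise. Instead it starts at the top: when $J=\{1,\ldots,n+1\}\setminus\{j\}$ is maximal, $\calB_{P,J}=\delta_{F_j,P}$ is a skyscraper, and Proposition~\ref{prop:skyscrapers} gives $\Hom(\delta_{F_j,P}\langle-\lambda_i\rangle,\delta_{F_j,P})\cong\phi_{F_j-\lambda_i,P}(\delta_{F_j,P})$, which is one-dimensional because the target is rank-one along that facet. So $x_i$ exists for free. The paper then observes that these $x_i$'s commute, and builds the maps for smaller $J$ (and identifies the cones) by reading commutative squares of skyscrapers as maps of iterated cones. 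No local compatibility check is needed.

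Your direct construction is more geometric, and the symmetric-difference identity $\partial\calP_J \,\Delta\, \partial(\calP_J+\lambda_i)=\partial\calP_{J'}$ is correct. But the step you flag as ``requires careful tracking of orientations but introduces no essential difficulty'' is where the content lies, and it is not as automatic as you suggest. At a face $\alpha$, the local objects are the twisted complexes $s_{I}$ and $s_{I'}$ for nested connected cyclic intervals $I'\subset I$. In the twisted-complex model, the differential runs forward along the cyclic order, so a \emph{terminal} sub-interval gives a sub-object (map $s_{I'}\hookrightarrow s_I$) while an \emph{initial} sub-interval gives a quotient (map $s_I\twoheadrightarrow s_{I'}$). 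For your prescription to define a morphism $s_I\to s_{I'}$ at every $\alpha$, you need the complement $I\setminus I'=\tilcalO_{\calP_{J'}}(\alpha)$ to lie consistently at one particular end of $I$ with respect to the cyclic order of Definition~\ref{definition:cyc}. This is a statement about the interaction between the fixed cyclic order $P_k=P_{k-1}+\lambda_k$ and the combinatorics of $\calP_J$ versus $\calP_{J}+\lambda_i$; it is plausible, but you have not established it, and without it ``the canonical morphism induced by the inclusion of cyclic subsets'' is ambiguous in direction. The paper's corepresentability argument bypasses this issue entirely.
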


\begin{proof}The definition of the map $x_i$ is clear in the case where $J=\{1,\ldots,n+1\}\setminus\{j\}$ for some $j$. In this case we want to exhibit a map $x_i:\delta_{F_j,P}\langle-\lambda_i\rangle\to\delta_{F_j,P}.$ But since the domain corepresents the stalk functor along the facet $F_j+\lambda_i$ and we know that the codomain is rank-one along this facet, there is a one-dimensional space of maps between these two, so such an $x_i$ exists. Moreover, from the definition of these two objects as the rank-one branes $\calB_{P+\lambda_i,J}$ and $\calB_{P,J},$ we see that the cone on this map is the rank-one brane $\calB_{P,J\setminus\{i\}}.$

To produce the maps on the objects $\calB_{P,J}$ for other $J$, we just need to note that the maps $x_i$ commute, \emph{i.e.}, that the squares
$$
\xymatrix{
\delta_{F_j,P}\langle-\lambda_i-\lambda_k\rangle\ar[r]^-{x_k}\ar[d]_-{x_i}&\delta_{F_j,P}\langle-\lambda_i\rangle\ar[d]^-{x_i}\\
\delta_{F_j,P}\langle-\lambda_k\rangle\ar[r]_-{x_k}&\delta_{F_i,P}
}
$$
are commutative. Hence we can read this square as a map of complexes in two different ways: either vertically, as the map
$$\xymatrix{
x_i:\calB_{P,\{j,k\}^c}\langle-\lambda_i\rangle\to\calB_{P,\{j,k\}^c}
}
$$
or horizontally, as the map
$$\xymatrix{
x_k:\calB_{P,\{j,i\}^c}\langle-\lambda_k\rangle\to\calB_{P,\{j,i\}^c}.
}
$$
We can produce all the maps $x_i$ by iterating this procedure, and they manifestly satisfy the relations described in the lemma. 
\end{proof}

\section{Mirror symmetry}


\subsection{Landau-Ginzburg $B$-model}

Here we recall from \cite{Nwms} the structure of the $B$-brane category associated to the Landau-Ginzburg $B$-model with background $\AA^{n+1}$ and superpotential $W_{n+1}= z_1 \cdots z_{n+1}.$


\subsubsection{Matrix factorizations}

Consider the background $M = \Spec A$, with $A= k[z_1, \ldots, z_{n+1}] $, and a superpotential $W\in A$ such that $0\in \AA^1$ is its only possible critical value.

We will denote by $X$ the special fiber $W^{-1}(0) = \Spec (A /(W))$. 

Let $\Perf(X)$ be the dg category of perfect complexes on $X$,
and $\Coh(X)$ the dg category of bounded coherent complexes of sheaves on $X$.

The category of B-branes associated to the LG model $(M,W)$ is the derived category of singularities $\calD_\sing(X)$, which is defined as the 2-periodic dg quotient category
$$D_\sing(X)= \Coh(X)/\Perf(X).$$

Orlov~\cite{orlov} established an equivalence of the derived category of singularities with the $\ZZ/2$-dg category $\MF(M,W)$ of matrix factorizations associated to $(M,W).$ The objects of this category are pairs $(V, d)$ of a $\ZZ/2$-graded  free $A $-module $V$
of finite rank
equipped with an odd endomorphism $d$
such that $d^2 =W\id$.
Thus we have $V=  V^0 \oplus V^1$, $d= (d_0, d_1) \in \Hom(V^0, V^1)\oplus \Hom(V^1, V^0)$, and
$d^2 =  (d_1 d_0, d_0 d_1) =(W\id, W\id)  \in \Hom(V^0, V^0)\oplus \Hom(V^1, V^1)$.
We denote the data of a matrix factorization by a diagram
$$
\xymatrix{
V^0 \ar[r]^-{d_0} &  V^1 \ar[r]^-{d_1}   & V^0.
}
$$

Orlov's equivalence
$$\xymatrix{
\MF(M, W)_{2\ZZ} \ar[r]^-\sim &\calD_\sing(X)
}
$$
is given by
$$
\xymatrix{ 
(V^0 \ar[r]^-{d_0} &  V^1 \ar[r]^-{d_1}   & V^0) \ar@{|->}[r] & \coker(d_1).
}
$$


\subsubsection{Coordinate hyperplanes}

For $n\in \mathbb N$, set  $[n] = \{1, \ldots, n\}$.

In this paper, we are interested in the matrix factorization category associated to the background $\AA^{n+1} = \Spec A_{n+1}$, with $A_{n+1} = k[z_a \, |\, a\in [n+1]] $, and the superpotential 
  $$
  \xymatrix{
  W_{n+1} = z_1 \cdots z_{n+1} \in A_{n+1}.
  }
  $$
This LG model is mirror to the pair of pants.

The special fiber of this superpotential is
  $$
  \xymatrix{
  X_n= W_{n+1}^{-1}(0) = \Spec B_n,
  }$$ 
  where we set $B_n =A_{n+1} /(W_{n+1})$.

It will also be convenient to set $W_{n+1}^a = W_{n+1}/z_a \in A$, for $a\in [n+1]$.

For $a\in [n+1]$, let $X_{n}^a =  \Spec A/(z_a) \subset X_n$ denote the coordinate hyperplane,  
and  $\cO_n^a$  its structure sheaf. As an object of $\Perf(\AA^{n+1})$, it admits the free resolution
$$
\xymatrix{
A_{n+1} \ar[r]^-{z_a} & A_{n+1} \ar[r] & \cO_n^a,
}
$$
and as an object of $\Coh(X_{n})$, it admits the infinite resolution
$$
\xymatrix{
\cdots \ar[r]^-{W_{n+1}^a} & B_n \ar[r]^-{z_a} & B_n \ar[r]^-{W_{n+1}^a} & B_n \ar[r]^-{z_a} & B_n \ar[r] & \cO_n^a.
}
$$

For $a\in [n+1]$,  let $\ul \cO_{n}^a\in \MF(\AA^{n+1}, W_{n+1})$ denote the matrix factorization
$$
\xymatrix{
A_{n+1} \ar[r]^-{W_{n+1}^a} &  A_{n+1}  \ar[r]^-{z_a}   & A_{n+1}.
}
$$

\begin{prop}\label{prop:mfhoms}
The $\ZZ/2$-dg category $\MF(\AA^{n+1}, W_{n+1})$ is split-generated by the collection of objects
$ \ul \cO_n^a$, for $a\in [n]$. There are equivalences of $\ZZ/2$-graded $k$-modules
$$
\xymatrix{
H^*(\Hom(\ul \cO_n^a, \ul \cO_n^a)) \simeq  A_{n+1}/(z_a, W_{n+1}^a),
&
a\in [n+1]
}
$$
$$
\xymatrix{
H^*(\Hom(\ul \cO_n^a, \ul \cO_n^b)) =
 A_{n+1}/(z_a, z_b)[-1],
&
a\not = b\in [n+1].
}
$$
\end{prop}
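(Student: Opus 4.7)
The plan is to compute the Hom spaces directly from the standard Hom complex for matrix factorizations, and to deduce split-generation via Orlov's equivalence together with a filtration-by-supports argument followed by a Mayer--Vietoris reduction.

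For the Hom computation, the $\ZZ/2$-graded Hom complex $\Hom_{A_{n+1}}(\ul\cO_n^a,\ul\cO_n^b)$ carries the differential $\delta(f)=d_b\circ f-(-1)^{|f|}f\circ d_a$. Both the even and odd parts are free of rank $2$ over $A_{n+1}$, and the differentials are $2\times 2$ matrices with entries drawn from $\{z_a,z_b,W_{n+1}^a,W_{n+1}^b\}$. In the diagonal case $a=b$, the even cycle condition forces the two components to coincide (as $A_{n+1}$ is a domain), so the module of even cycles is $A_{n+1}$ and the boundary image is the ideal $(z_a,W_{n+1}^a)$, yielding $H^0=A_{n+1}/(z_a,W_{n+1}^a)$. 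Vanishing of $H^1$ then follows from the fact that $(z_a,W_{n+1}^a)$ is a regular sequence, so its Koszul complex is acyclic.

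In the off-diagonal case $a\neq b$, I would exploit the identities $W_{n+1}^a=z_b\cdot W_{n+1}^{a,b}$ and $W_{n+1}^b=z_a\cdot W_{n+1}^{a,b}$, where $W_{n+1}^{a,b}:=W_{n+1}/(z_az_b)$. The even cycle condition becomes $z_af_0=z_bf_1$, which by coprimality of $z_a$ and $z_b$ in $A_{n+1}$ forces $(f_0,f_1)=(z_bh,z_ah)$ for a unique $h$, and each such cycle is visibly a boundary, so $H^0=0$. The odd cycle condition becomes $g_0=-W_{n+1}^{a,b}g_1$ with $g_1\in A_{n+1}$ free, while the boundary image constrains $g_1$ to the ideal $(z_a,z_b)$; this gives $H^1=A_{n+1}/(z_a,z_b)$, matching the stated shift by $[-1]$.

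For split-generation, I would transfer the question to $\calD_\sing(X_n)$ via Orlov's equivalence. Since $X_n$ is the reduced union of coordinate hyperplanes $X_n^a$, any coherent sheaf on $X_n$ admits a finite filtration by associated primes whose successive quotients are pushed forward from various intersections $X_n^S:=\bigcap_{a\in S}X_n^a$, $\emptyset\neq S\subset[n+1]$. Each $\cO_{X_n^S}$ can be built in the thick subcategory from $\ul\cO_n^a$ for any single $a\in S$ by iterated cones of multiplication-by-$z_b$ maps ($b\in S\setminus\{a\}$), giving split-generation by $\{\ul\cO_n^a\}_{a\in[n+1]}$. To trim one generator, use that $\cO_{X_n}$ is perfect (hence zero in $\calD_\sing$) together with the short exact sequence
$$
0\to\cO_Y\to\cO_{X_n}\to\cO_n^{n+1}\to 0
$$
where $Y=\bigcup_{a\in[n]}X_n^a$, which gives $\ul\cO_n^{n+1}\simeq\cO_Y[1]$ in $\calD_\sing(X_n)$; and then the \v{C}ech-type Mayer--Vietoris exact sequence expressing $\cO_Y$ as an iterated extension of $\cO_{X_n^S}$ for $\emptyset\neq S\subseteq[n]$ places $\ul\cO_n^{n+1}$ in the thick subcategory generated by $\{\ul\cO_n^a\}_{a\in[n]}$.

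The main obstacle is this last trimming step. The Hom computations themselves are direct linear algebra over the domain $A_{n+1}$, and filtration-by-supports yields generation by all $n+1$ structure sheaves without difficulty. The reduction from $n+1$ to $n$ generators depends on combining the perfectness of $\cO_{X_n}$ in $\calD_\sing(X_n)$ with the Mayer--Vietoris resolution for the coordinate hyperplane arrangement $Y\subset\AA^{n+1}$; the key technical verification is exactness of that \v{C}ech-like sequence.
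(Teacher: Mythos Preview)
Your proposal is correct and follows essentially the same outline as the paper's proof, which is extremely terse: the paper simply asserts that the $\cO_n^a$ for $a\in[n+1]$ generate $\Coh(X_n)$, that $\ul\cO_n^{n+1}$ lies in the triangulated envelope of the remaining $n$ objects, and that the Hom calculation is ``straightforward.'' Your filtration-by-supports argument and your explicit $2\times 2$ matrix computation are exactly the standard ways to substantiate those assertions.

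One remark on the trimming step: your Mayer--Vietoris route works, but the paper later (Lemma~\ref{lemma:acyclic}(ii)) records a more economical argument. The complex
\[
\ul\cO_n^{1}\xrightarrow{f_{12}}\ul\cO_n^{2}[1]\xrightarrow{f_{23}}\cdots\xrightarrow{f_{n,n+1}}\ul\cO_n^{n+1}[n]
\]
totalizes in $\calD_\sing(X_n)$ to $A_{n+1}/(W_{n+1})=\cO_{X_n}$, which is perfect and hence zero; this exhibits $\ul\cO_n^{n+1}$ directly as an iterated cone on $\ul\cO_n^1,\ldots,\ul\cO_n^n$ without invoking the \v{C}ech resolution of $\cO_Y$. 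Your argument and this one share the same core input (vanishing of $\cO_{X_n}$ in $\calD_\sing$), but the acyclic-complex formulation avoids the auxiliary exactness verification you flagged as the ``main obstacle.'' A minor imprecision: in your filtration-by-associated-primes step, the successive quotients are of the form $R/\mathfrak{p}$ for primes $\mathfrak{p}\supset(z_a)$ for some $a$, hence are pushed forward from a single hyperplane $X_n^a$ rather than from a general intersection $X_n^S$; this only simplifies the argument.
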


\begin{proof}
 The collection of objects $ \cO_n^a$, for $a\in [n+1]$, generates $\Coh(X_n)$,    
and $\ul \cO_n^{n+1}$ is in the triangulated envelope of the collection of objects $\ul \cO_n^a$, for $a\in [n]$,
hence
the collection of objects $\ul \cO_n^a$, for $a\in [n]$, generates $\MF(\AA^{n+1}, W_{n+1})$.
The cohomology of morphism complexes is a straightforward calculation.
\end{proof}


\subsection{Equivariant/Graded version}\label{s:equiv}

In order to match the passage to a universal abelian cover on the pair of pants, we must pass to a quotient on the mirror. Equivalently, we must work with a $B$-model category which has been enhanced by equivariance data.

Let us return to the general setup $M = \Spec A$, with 
 $A= k[z_1, \ldots, z_n]$,
 and now assume the superpotential $W\in A$ is homogeneous for an algebraic torus $T\subset (\G_m)^{n}$.

Let  $\chi^*(T) =\Hom(T, \G_m)$ denote the weight lattice of $T$,
and   $w\in \chi^*(T)$ the weight of $W$.

Recall that a $T$-equivariant $k$-module is equivalently a $T$-representation, or equivalently again a $\chi^*(T)$-graded $k$-module. Given a $T$-equivariant $k$-module $V$, 
 we write $V_\lambda$ for the $\lambda$-component of $V$,
 for  $\lambda\in \chi^*(T)$.
 Given a $T$-equivariant $k$-module $V$, and a weight $\mu\in \chi^*(T)$,
we have the $\mu$-twisted $k$-module 
defined by $
V\langle\mu\rangle_\lambda = V_{\lambda - \mu},
$
 for $\lambda\in \chi^*(T)$.

As before, let $X = W^{-1}(0) = \Spec B$ denote the special fiber, with $B  = A /(W)$.

Let $\Perf(X)^T$ be the dg category of $T$-equivariant perfect complexes on $X$,
and $\Coh(X)^T$ the dg category of  $T$-equivariant bounded coherent complexes of sheaves on $X$.

Let $\D_\sing(X)^T = \Coh(X)^T/\Perf(X)^T$ be the dg quotient category of 
 $T$-equivariant singularities. Note that $\D_\sing(X)^T$ is not a 2-periodic dg category, but rather the shift $[2]$ is equivalent to the twist 
 $\langle w\rangle$.

Let $\MF(M, W)^T$ be the  dg category of $T$-equivariant matrix factorizations.
Its objects are pairs $(V, d)$ of a $\BZ/2$-graded  free $T$-equivariant $A$-module 
$V=  V^0 \oplus V^1$
of finite rank
together with a $T$-equivariant morphism
$d= (d_0, d_1) \in \Hom(V^0\langle-w\rangle, V^1)\oplus \Hom(V^1, V^0)$ such that
$d^2 =  (d_1 d_0, d_0 d_1) =(W\id, W\id)  \in \Hom(V^0\langle-w\rangle, V^0)\oplus \Hom(V^1\langle-w\rangle, V^1)$.
We denote the data of a matrix factorization by a diagram
$$
\xymatrix{
V^0\langle-w\rangle \ar[r]^-{d_0} &  V^1 \ar[r]^-{d_1}   & V^0,
}
$$
or equivalently by its $w$-twisted periodicization
$$
\xymatrix{
\cdots \ar[r] &  V^1\langle-w\rangle \ar[r]^-{d_1}   & V^0\langle-w\rangle \ar[r]^-{d_0} &  V^1 \ar[r]^-{d_1}   & V^0\ar[r]^-{d_0}  & V^1\langle w\rangle \ar[r] & \cdots.
}
$$
The morphism complex between $T$-equivariant matrix factorizations is the usual  $T$-equivariant morphism complex between their $w$-twisted periodicizations. (Hence if $T$ is the trivial torus, then $\MF(M, W)^T$ is the usual $\BZ/2$-dg category $\MF(M, W)$ of plain matrix factorizations, considered as a 2-periodic dg category.)
Note that  the shift $[2]$  is equivalent to the  twist $\langle w\rangle$.

As in the non-equivariant case, there is an equivalence of dg categories
$$\xymatrix{
\MF(M, W)^T \ar[r]^-\sim &\D_\sing(X)^T
& 
(V = V^0 \oplus V^1, d = (d_0, d_1)) \ar@{|->}[r] & \coker(d_1)
}
$$

Now let us focus on 
the background $\BA^{n+1} = \Spec A_{n+1}$, with $A_{n+1} = k[z_a \, |\, a\in [n+1]] $, and the superpotential 
  $$
  \xymatrix{
  W_{n+1} = z_1 \cdots z_{n+1} \in A_{n+1}.
  }
  $$
  
  Recall the union of coordinate hyperplanes 
  $$
  \xymatrix{
  X_n= W_{n+1}^{-1}(0) = \Spec B_n,
  }$$ 
  where we set $B_n =A_{n+1} /(W)$.

Now consider the entire torus $T_{n+1} = (\G_m)^{n+1}$ with weight lattice $\chi^*(T_{n+1}) \simeq \BZ^{n+1}=\BZ\langle\lambda_1,\ldots,\lambda_{n+1}\rangle$. We will be interested in the subtorus $\TT^n$ which is the kernel of the restriction of the superpotential $W_{n+1}=z_1\cdots z_{n+1}$ to $T^n.$ The torus $\TT^n$ has weight lattice $\Lambda_n=\BZ\langle \lambda_1,\ldots,\lambda_{n+1}\rangle/(\sum \lambda_i).$ (As before we use $\lambda_i$ to denote the class of $\lambda_i$ in $\chi^*(\TT^n).$ Since we will never be interested in the torus $T^{n+1},$ this ambiguity poses no problems for us.)
As a subtorus of $T_{n+1},$ the torus $\TT^n$ inherits a natural action on $\BA^{n+1},$ equipping $A_{n+1}$ with the $\Lambda_n$-grading for which the coordinate function $z_a \in A_{n+1}$
has weight $\lambda_a$, for $a \in [n+1]$.

We will be interested in the $\TT^n$-equivariant matrix factorization category $\MF(\BA^{n+1},W_{n+1})^{\TT^n}.$ Note that since the superpotential $W_{n+1} \in A_{n+1}$ has weight 0 for the $\TT^n$ action, the shift $[2]$ in this category is actually equivalent to the identity, and hence $\TT^n$-equivariant matrix factorizations actually form a $\BZ/2$-dg category.

For $a\in [n+1]$,  let $\ul \cO_{n}^a\in \MF(\BA^{n+1}, W_{n+1})^{\TT^n}$ denote the $\TT^n$-equivariant matrix factorization
$$
\xymatrix{
A_{n+1} \ar[r]^-{W_{n+1}^a} &  A_{n+1}\langle\lambda_a\rangle  \ar[r]^-{z_a}   & A_{n+1}.
}
$$

We have the following elaboration of Proposition~\ref{prop:mfhoms}.

 \begin{prop}
 The dg category $\MF(\BA^{n+1}, W_{n+1})^{\TT^n}$ is generated by the collection of objects
$ \ul \cO_n^a\langle\lambda\rangle$, for $a\in [n]$, and $\lambda\in\Lambda_n$. There are equivalences of $\BZ/2$-graded $k$-modules
$$
\xymatrix{
H^*(\Hom(\ul \cO_n^a\langle \lambda\rangle, \ul \cO_n^a\langle \mu\rangle)) \simeq  A_{n+1}/(z_a, W_{n+1}^a)_{\lambda - \mu},
&
a\in [n+1]
}
$$
$$
\xymatrix{
H^*(\Hom(\ul \cO_n^a\langle \lambda\rangle, \ul \cO_n^b\langle \mu\rangle)) \simeq
 A_{n+1}/(z_a, z_b)[-1]_{\lambda - \mu+\lambda_a},
&
a\not = b\in [n+1].
}
$$
 \end{prop}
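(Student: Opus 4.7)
This is an equivariant elaboration of Proposition~\ref{prop:mfhoms}, and my plan is to mirror that proof while systematically tracking the $\TT^n$-structure. The essential simplification is that $W_{n+1}$ has weight $\sum_a\lambda_a=0$ in $\Lambda_n$, so the equivariant shift $[2]=\langle w\rangle$ reduces to the identity twist; consequently $\MF(\BA^{n+1},W_{n+1})^{\TT^n}$ is genuinely $\BZ/2$-graded, with a natural $\Lambda_n$-action by the twist functors $\langle\lambda\rangle$, and every construction in the non-equivariant proof lifts directly once weights are recorded.

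For the generation claim, I would first argue that the $\TT^n$-equivariant structure sheaves $\cO_n^a\langle\lambda\rangle$, for $a\in[n+1]$ and $\lambda\in\Lambda_n$, generate $\Coh(X_n)^{\TT^n}$: this is a standard d\'evissage for equivariant coherent sheaves on the toric variety $X_n$, whose irreducible components are precisely the $X_n^a$. Orlov's equivalence is $\TT^n$-equivariant and carries these generators to $\ul\cO_n^a\langle\lambda\rangle$, so these generate $\MF(\BA^{n+1},W_{n+1})^{\TT^n}$. To further reduce from $a\in[n+1]$ to $a\in[n]$, one invokes the same triangulated-envelope argument as in the non-equivariant proof: $\ul\cO_n^{n+1}$ is built as an iterated cone on the objects $\ul\cO_n^a$ for $a\in[n]$ via the Koszul-type relation $z_1\cdots z_{n+1}=0$, each connecting morphism being multiplication by a coordinate function $z_a$ and therefore equivariant only after introducing the twist $\langle\lambda_a\rangle$, which is absorbed into the parametrization by $(a,\lambda)$.

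For the Hom computation, I would work directly from the explicit matrix factorization
\[
\ul\cO_n^a\langle\lambda\rangle\;=\;\bigl(A_{n+1}\langle\lambda\rangle\xrightarrow{W_{n+1}^a}A_{n+1}\langle\lambda+\lambda_a\rangle\xrightarrow{z_a}A_{n+1}\langle\lambda\rangle\bigr).
\]
The equivariant morphism complex between $\ul\cO_n^a\langle\lambda\rangle$ and $\ul\cO_n^b\langle\mu\rangle$ is the weight-zero part of the $\Lambda_n$-graded morphism complex of the underlying non-equivariant matrix factorizations; it decomposes into four components indexed by $(i,j)\in\{0,1\}^2$ (parity of source and target), each a copy of $A_{n+1}$ placed in a definite weight determined by the four shifts $\lambda$, $\lambda+\lambda_a$, $\mu$, $\mu+\lambda_b$. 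The differential is the graded commutator with $(W_{n+1}^a,z_a)$ and $(W_{n+1}^b,z_b)$. The resulting 2-periodic complex is exactly the one whose cohomology was computed in Proposition~\ref{prop:mfhoms}, namely $A_{n+1}/(z_a,W_{n+1}^a)$ in the $a=b$ case and $A_{n+1}/(z_a,z_b)[-1]$ in the $a\neq b$ case; reading off the total weight carried by each cohomology class yields the symmetric $\mu-\lambda$ twist coming from comparing source and target, together with the additional asymmetric $\lambda_a$ contribution produced by the placement of the $\langle\lambda_a\rangle$-twist on the middle term of $\ul\cO_n^a\langle\lambda\rangle$.

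The principal difficulty is the weight bookkeeping in the off-diagonal case: one must carefully reconcile the four weight contributions from the components of the Hom complex, together with the twist conventions $V\langle\mu\rangle_\lambda=V_{\lambda-\mu}$ from Section~\ref{s:equiv}, to extract the claimed $\Lambda_n$-shift on the cohomology. Beyond this, generation is an immediate consequence of the $\TT^n$-equivariant Orlov equivalence and the non-equivariant reduction argument, and the differential-graded computation is otherwise routine.
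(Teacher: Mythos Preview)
Your proposal is correct and follows essentially the same approach as the paper: both reduce to the non-equivariant computation of Proposition~\ref{prop:mfhoms} by restricting to the weight-zero (i.e., $\TT^n$-equivariant) part of the morphism complex, and both explain the extra $\lambda_a$ shift in the off-diagonal case as arising from the convention of placing the $\langle\lambda_a\rangle$-twist on the degree~1 term of $\ul\cO_n^a$. The paper's proof is much terser---it simply asserts these two points without unpacking the generation argument or the weight bookkeeping---so your more detailed account is a faithful expansion of the same reasoning.
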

\begin{proof}
This is the same calculation as in Proposition~\ref{prop:mfhoms} but restricted to the subcomplex of $\TT^n$-equivariant maps. The extra twist by $\lambda_a$ in the second equality is a result of our choice to twist the degree 1 (rather than degree 0) piece when defining the equivariant complex $\ul\cO_n^a.$
\end{proof}

We highlight also one additional piece of structure which is useful for understanding the equivalence proved in the next subsection.
\begin{defn}
We will let 
$$
\xymatrix{
f_{ij}:\ul\cO_n^i[1]\ar[r]&\ul\cO_n^j\langle \lambda_i \rangle
}
$$
be the (closed, degree 0) map of matrix factorizations which is given by
$$
\xymatrix{
	A_{n+1}\langle\lambda_i\rangle\ar[r]^-{-z_i}\ar[d]^{-id} & A_{n+1}\ar[r]^-{-W^i_{n+1}}\ar[d]^{W_{n+1}^{i,j}} & A_{n+1}\langle\lambda_i\rangle \ar[d]^{-id } \\ 
A_{n+1}\langle\lambda_i\rangle\ar[r]^-{W_{n+1}^j} & A_{n+1}\langle\lambda_j+\lambda_i\rangle\ar[r]^-{z_j} & A_{n+1}\langle\lambda_i\rangle,
}
$$
where we write $W_{n+1}^{i,j}$ for $\frac{W_{n+1} }{z_iz_j}.$
\end{defn}
The map $f_{ij}$ is a representative for
$$1\in  A_{n+1}/(z_i, z_j)[-1]_0\cong H^*(\Hom_{\MF(\BA^{n+1},W_{n+1})^{\TT^n}}(\ul \cO_n^i, \ul \cO_n^j\langle \lambda_i\rangle)),$$
and the collection of maps $f_{ij}$ (together with their twists by $\lambda\in \Lambda$) form a set of generating morphisms for the category $\MF(\BA^{n+1},W_{n+1})^{\TT^n}.$

\begin{lemma}\label{lemma:acyclic}
	Let $I=\{i_1,\ldots,i_k\}\subset \{1,\ldots,n+1\}$ be a nonempty subset equipped with an ordering. By taking successive extensions of the $\ul\cO_n^{i_j}$ along the morphisms $f_{i_j,i_{j+1}}$, we can define a twisted complex
$$
\ul\cO_n^I:=\left(\xymatrix{
		\ul\cO_n^{i_1}\ar[r]^-{f_{i_1i_2}}&\ul\cO_n^{i_2}\langle\lambda_{i_1}\rangle\ar[r]^-{f_{i_2i_3}}&\cdots\ar[r]^-{f_{i_{k-1}i_{k}}}&\ul\cO_n^{i_{k}}\langle\sum_{j=1}^k\lambda_{i_{j}}\rangle
}\right)
$$
(where we leave implicit in our notation the homotopies witnessing the triviality of compositions) which is independent of the choice of ordering on $I$, up to a shift of $\Lambda$-grading. Moreover, if $I=\{1,\ldots,n+1\}$, then $\ul\cO_n^I=0.$
\end{lemma}
\begin{proof}
	In order to simplify notation, we will work nonequivariantly (\emph{i.e.}, forgetting the $\Lambda$-grading).
	Now the objects $\ul\cO_n^I$ become easy to understand if we work in the derived category of singularities 
	$D_{sing}(\Spec (A_{n+1}/W_{n+1}))$ instead of the matrix factorization category.
	The equivalence between these two categories takes the matrix factorization $\ul\cO_n^i$ to the structure sheaf $\cO_n^i=A_{n+1}/(z_i)$ of the hyperplane $\{z_i=0\},$ and it takes the degree 1 map $f_{ij}$ to the extension
$$
\xymatrix{
A_{n+1}/(z_j)\ar[r]^-{z_i}&A_{n+1}/(z_iz_j)\ar[r]^-{z_j}&A_{n+1}/(z_i).
}
$$

Similarly, the map $f_{jk}:\ul\cO_n^j[1]\to \ul\cO_n^k$ descends to a map
$\ul \cO_n^{\{i,j\}}[1]\to \ul\cO_n^k$ which in the singularity category is equivalent to the extension
\[\xymatrix{
A_{n+1}/(z_k)\ar[r]^-{z_iz_j}&A_{n+1}/(z_iz_jz_k)\ar[r]^-{z_k}&A_{n+1}/(z_iz_j).
	}
\]
%
%

By iterating this process, we see that $\ul\cO^I_n$ is represented in
$D^b_{sing}(\Spec(A_{n+1}/W_{n+1}))$ by $A_{n+1}/(z_{i_1}\cdots z_{i_k}),$ the
structure sheaf of the union of the hyperplanes $\{z_{i_j}=0\}$. This
proves the final statement of the lemma, since if $I=\{1,\ldots,n+1\},$ then
$\ul\cO^I_n$ is represented by a free rank-1 module over $A_{n+1}/W_n,$ which
is zero in the singularity category.
\end{proof}


\subsection{Main result}

The main result of this paper will be a $\tilW_n$-equivariant equivalence between the equivariant matrix factorization category $\MF(\AA^{n+1},W_{n+1})^{\TT^n}$
and the combinatorial Fukaya category $Q^{wr}_{n-1}$ constructed in the previous section. We will establish this equivalence by describing a functor
$$\xymatrix{
\Coh(W_{n+1}^{-1}(0))^{\TT^n}\ar[r]^-{\bar\Phi}&Q^{inf}_{n-1}
}$$
and checking that it factors through both the projection
$$
\xymatrix{\Coh(W_{n+1}^{-1}(0))^{\TT^n}\ar@{->>}[r]&\Coh(W_{n+1}^{-1}(0))^{\TT^n}/\Perf(W_{n+1}^{-1}(0))^{\TT^n} \cong\MF(\AA^{n+1},W_{n+1})^{\TT^n}
}$$
and the inclusion
$$
\xymatrix{
Q^{wr}_{n-1}\ar@{^{(}->}[r]&Q^{inf}_{n-1},
}
$$
and that the middle functor $\Phi$ in the resulting sequence of functors
$$
\xymatrix{
\Coh(W_{n+1}^{-1}(0))^{\TT^n}\ar@{->>}[r]&\MF(\AA^{n+1},W_{n+1})^{\TT^n}\ar[r]^-{\Phi}&Q^{wr}_{n-1}\ar@{^{(}->}[r]&Q^{inf}_{n-1}
}
$$
is an equivalence of categories.

In order to define a functor with domain $\Coh(W_{n+1}^{-1}(0)),$ we use the fact that the variety $W_{n+1}^{-1}(0)$ can be obtained by gluing together copies of affine space: Let $D$ be the poset of proper subsets $I$ of the set $\{1,\ldots,n+1\},$ and write $\AA^I$ for $\AA^{|I|}.$ Then the natural inclusion maps $\AA^I\to W_{n+1}^{-1}(0)$ and the inclusion maps $\AA^I\to \AA^J$ induced by inclusions $I\subset J$ give a $D$-diagram of varieties, and we have an equivalence
$$
\xymatrix{
\colim_{D}\AA^I\ar[r]^-\sim&W_{n+1}^{-1}(0).
}
$$

This induces an equivalence
$$
\xymatrix{
\colim_D\Coh(\AA^I)^{\TT^n}\ar[r]^-\sim &\Coh(W_{n+1}^{-1}(0))^{\TT^n},
}
$$
so that the functor $\bar\Phi$ will be an object of
$$\Fun(\colim_D\Coh(\AA^I)^{\TT^n},Q_{n-1}^{inf})^{\tilW_n}=\lim_D\Fun(\Coh(\AA^I)^{\TT^n},Q_{n-1}^{inf})^{\tilW_n}.$$

A $\tilW_n$-equivariant functor from $\Coh(\AA^I)^{\TT^n}$ is just a choice of object $\calO_I$ with commuting maps $x_i:\calO_I\langle-\lambda_i\rangle\to \calO_I$ for each $i\in I.$ Since the limit diagram
$$D\ni I\mapsto \Fun(\Coh(\AA^I)^{\TT^n},Q_{n-1}^{inf})^{\tilW_n}$$
is strict, objects of this limit can be defined ``by hand," without any higher coherence data: such an object is a choice of a $\tilW_n$-equivariant functor $\Coh(\AA^I)^{\TT^n}\to Q_{n-1}^{inf}$ for each $I$, plus coherent equivalences
$$
\xymatrix{
\calO_{I\setminus\{i\}}\cong(\calO_I\langle-\lambda_i\rangle\ar[r]^-{x_i}&\calO_I).
}
$$

According to the above analysis, we can define a functor $\bar\Phi$ as follows: fix once and for all a permutohedron $P$ in $V_n.$ Then we define $\bar\Phi$ by declaring $\calO_I=\calB_{P,I}$ and the maps $x_i$ to be the maps from Lemma~\ref{lemma:colimrels}.
\begin{lemma}These choices satisfy the necessary relations to define a functor 
$$\xymatrix{
\Coh(W^{-1}(0))^{\TT^n}\cong \on{colim}_D\Coh(\AA^I)^{\TT^n}\ar[r]^-{\bar\Phi}&Q^{inf}_{n-1}.
}
$$
\end{lemma}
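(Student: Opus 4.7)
The plan is to verify directly the two conditions that describe an object of the strict limit $\lim_D \Fun(\Coh(\AA^I)^{\TT^n}, Q^{inf}_{n-1})^{\tilW_n}$ spelled out just before the lemma. Concretely, I must check (i) that for each proper subset $I \subset \{1,\ldots,n+1\}$ the assignment $\cO_{\AA^I} \mapsto \calB_{P,I}$ with $z_i$ acting as $x_i$ for $i \in I$ extends to a $\tilW_n$-equivariant exact functor $F_I: \Coh(\AA^I)^{\TT^n} \to Q^{inf}_{n-1}$, and (ii) that for each $i \in I$ there is a compatible equivalence $\calB_{P,I\setminus\{i\}} \cong \on{Cone}(\calB_{P,I}\langle-\lambda_i\rangle \to \calB_{P,I})$, and that these assemble coherently over nested chains $I \supset I\setminus\{i\} \supset I\setminus\{i,j\}$.

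For (i), I use that $\Coh(\AA^I)^{\TT^n}$ is equivalent to the dg category of $\Lambda_n$-graded finitely generated modules over the graded polynomial ring $k[z_i \mid i \in I]$ (with $z_i$ of weight $\lambda_i$), so it is split-generated as a $\tilW_n$-equivariant category by the structure sheaf $\cO_{\AA^I}$. Consequently a $\tilW_n$-equivariant exact functor out of it is precisely the data of the image of $\cO_{\AA^I}$ together with pairwise commuting weight-$\lambda_i$ endomorphisms for $i \in I$. The only nontrivial check is therefore commutativity of the maps $x_i$ on $\calB_{P,I}$, and this was already established inside the proof of Lemma \ref{lemma:colimrels}: the $x_i$ were constructed first on the skyscrapers $\delta_{F_j,P}$, where the squares $x_i x_k = x_k x_i$ were verified explicitly, and then propagated to all the branes $\calB_{P,J}$ by iterating the cone construction, which manifestly preserves commutativity.

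For (ii), the cone equivalence is precisely the content of Lemma \ref{lemma:colimrels} with $J = I$; $\tilW_n$-equivariance is immediate because all data are built from the single base permutohedron $P$ and the canonical maps $x_i$, which are translation-equivariant by construction. To check consistency of these equivalences over nested chains $I \supset I\setminus\{i\} \supset I\setminus\{i,j\}$, I iterate the cone construction: either ordering yields $\calB_{P,I\setminus\{i,j\}}$, and the identification of the two double cones is forced by the commutativity $x_i x_j = x_j x_i$ established in step (i). More generally, the commutativity of all the $x_i$'s produces a Koszul-type complex whose iterated cones realize every $\calB_{P,I'}$ for $I' \subset I$, from which the higher compatibilities can be read off uniformly.

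The step I expect to be the main obstacle is justifying that the limit $\lim_D \Fun(\Coh(\AA^I)^{\TT^n}, Q^{inf}_{n-1})^{\tilW_n}$ really is a strict limit in the sense asserted in the paragraph preceding the lemma --- so that no tower of higher homotopy coherences need be supplied beyond the first-order cone equivalences above. The excerpt invokes this explicitly; I would justify it structurally via the observation that each $\Coh(\AA^I)^{\TT^n}$ is freely generated as a $\tilW_n$-equivariant dg category by a single object with strictly commuting shift operators, so that a dg functor out of it is rigidly determined by its values on the generator and those operators. Granted this reduction, the verifications of (i) and (ii) above complete the proof, with the strict commutativity of the $x_i$'s from Lemma \ref{lemma:colimrels} as the essential input throughout.
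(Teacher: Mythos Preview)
Your proposal is correct and follows essentially the same approach as the paper: the paper's proof is the single sentence ``This is exactly the content of Lemma~\ref{lemma:colimrels},'' and your argument is a careful unpacking of why that lemma supplies all the required data (commutativity of the $x_i$ for part (i), the cone identifications for part (ii)). Your additional discussion of higher coherences and the strictness of the limit merely makes explicit what the paper asserts in the paragraph preceding the lemma, so there is no genuine divergence of method.
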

\begin{proof}This is exactly the content of Lemma \ref{lemma:colimrels}.
\end{proof}

Note that this means in particular that for $I=\{1,\ldots,n+1\}\setminus\{i\},$ the structure sheaf $\calO_{\AA^I}$ is mapped by $\bar\Phi$ to the skyscraper $\delta_{F_i,P},$ where $F_i$ is the facet separating $P$ from $P+\lambda_i.$

\begin{theorem}\label{theorem:main}
The functor $\bar\Phi$ can be factored as a composition
$$
\xymatrix{
\Coh(W_{n+1}^{-1}(0))^{\TT^n}\ar[r]&\MF(\AA^{n+1},W_{n+1})^{\TT^n}\ar[r]^-{\Phi}&Q^{wr}_{n-1}\ar[r]&Q^{inf}_{n-1},
}
$$
where the left-hand map is the projection, the right-hand map is the inclusion, and the middle map $\Phi$ is an equivalence of categories equivariant for the $\tilW_n$ action.

\end{theorem}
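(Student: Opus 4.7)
The plan is to establish the theorem in four stages. First, I verify that $\bar\Phi$ lands in $Q^{wr}_{n-1}$: the skyscrapers $\delta_{F_i,P}$ are in $Q^{wr}_{n-1}$ by construction, and by Lemma~\ref{lemma:colimrels} every rank-one brane $\calB_{P,I}$ with $\emptyset\neq I \subsetneq [n+1]$ is built by iterated cones from skyscrapers $\{\delta_{F_i,P'}\}$, hence lies in $Q^{wr}_{n-1}$ (which is closed under finite cones). Since $\bar\Phi$ takes the generators $\calO_{\AA^I}$ of the colimit presentation to $\calB_{P,I}$, its image lies in $Q^{wr}_{n-1}$.

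Second, to show $\bar\Phi$ factors through the quotient to $\MF$, I show it annihilates $\Perf(W_{n+1}^{-1}(0))^{\TT^n}$. This subcategory is split-generated by twists of $\calO_{W_{n+1}^{-1}(0)}$, so it suffices to check $\bar\Phi(\calO_{W_{n+1}^{-1}(0)})=0$. The structure sheaf admits an inclusion–exclusion style total resolution by structure sheaves of intersections of coordinate hyperplanes; via Lemma~\ref{lemma:colimrels} this translates to a total complex of rank-one branes on the A-side matching the image of the acyclic complex of Lemma~\ref{lemma:acyclic}(ii). Its totalization corresponds to an infinite wrapping of the rank-one brane $\calB_{\{P\}}$ outward in every $\lambda_i$-direction, and the resulting complex is null in $Q^{wr}_{n-1}$.

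Third, I show the induced functor $\Phi$ is an equivalence. For essential surjectivity, the lemma preceding Proposition~\ref{prop:skyscrapers} states that skyscrapers along maximal facets of $\frH_{n-1}$ generate $Q^{wr}_{n-1}$; the $\tilW_n$-action is transitive on oriented maximal facets, and each $\tilW_n$-translate of $\delta_{F_a,P}$ arises as $\Phi(\ul\cO_n^a\langle\lambda\rangle)$. For full faithfulness I match $\Hom$ spaces. The earlier equivariant $\Hom$ proposition gives $H^*(\Hom(\ul\cO_n^a\langle\lambda\rangle,\ul\cO_n^b\langle\mu\rangle))$ explicitly as weight-graded pieces of $A_{n+1}/(z_a,z_b)$; on the A-side, using that a skyscraper corepresents a stalk functor,
\[
\Hom_{Q^{wr}_{n-1}}(\delta_{F_a,P},\delta_{F_b,P+\mu-\lambda}) \simeq \phi_{F_a,P}(\delta_{F_b,P+\mu-\lambda}).
\]
Combining with the rank-one brane description of skyscrapers (Proposition~\ref{prop:skyscrapers}), this reduces to counting facets equivalent to $F_a$ under $\Lambda_n$-translation within the support of $\calB_{P+\mu-\lambda,[n+1]\setminus\{b\}}$, with the monomial algebra structure realized by the $\lambda_i$-translation action on branes. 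The $\tilW_n$-equivariance is manifest from the construction: $\bar\Phi$ arises from the $\tilW_n$-equivariant correspondence $\calO_n^i\langle\lambda\rangle \leftrightarrow \delta_{F_i,P+\lambda}$ under the identification of $\Lambda_n$ with both the weight lattice of $\TT^n$ and the translation lattice of $\frH_{n-1}$.

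The primary technical obstacle is the $\Hom$ computation in the third step. While the correspondence is conceptually clear once one has the rank-one description of skyscrapers, explicitly identifying a monomial basis for $A_{n+1}/(z_a,z_b)_{\mu-\lambda+\lambda_a}$ with the corresponding facet-count data in the honeycomb, and verifying that the multiplicative structure (action of $z_i$) matches the $\lambda_i$-translation maps $x_i$ of Lemma~\ref{lemma:colimrels}, requires careful combinatorial bookkeeping. In particular, one must track degree shifts through Proposition~\ref{prop:skyscrapers} and reconcile the twist by $\lambda_a$ appearing on the B-side (which arose from the convention of twisting the odd piece of $\ul\cO_n^a$) with the orientation convention built into the stalk functor $\phi_{F_a,P}$.
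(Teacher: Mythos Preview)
Your outline covers the same logical arc as the paper, but there is a genuine confusion in your second step and a substantial methodological divergence in your third.

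For factoring through $\MF$: your invocation of Lemma~\ref{lemma:acyclic}(ii) is misplaced. That lemma is a computation inside $\MF$ (equivalently $D^b_{\sing}$), so you cannot use it to show $\bar\Phi$ kills $\calO_{W_{n+1}^{-1}(0)}$ \emph{before} you have passed to the quotient. Moreover, the inclusion--exclusion resolution is finite, so the phrase ``infinite wrapping'' does not describe what is happening. The paper's argument is more direct: in the colimit presentation $\Coh(W_{n+1}^{-1}(0))^{\TT^n}\simeq\colim_D\Coh(\AA^I)^{\TT^n}$, the structure sheaf is the limit of the diagram $I\mapsto i_{I*}\calO_{\AA^I}$, so its image under $\bar\Phi$ is $\lim_I\calB_{P,I}$, and one checks this limit is zero in $Q^{inf}_{n-1}$ (e.g.\ stalkwise, since stalk functors preserve limits).

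For the equivalence: your plan to match Hom spaces by computing stalks $\phi_{F_a,P}(\delta_{F_b,P+\mu-\lambda})$ and counting facets is workable in principle---both sides are at most one-dimensional---but it is needlessly laborious, and as written it only shows the Hom spaces are abstractly isomorphic, not that $\Phi$ induces the isomorphism. The paper sidesteps all of this. Just before Lemma~\ref{lemma:acyclic} it was recorded that the maps $f_{ij}:\ul\cO_n^i[1]\to\ul\cO_n^j\langle\lambda_i\rangle$ (and their $\Lambda_n$-twists) generate all morphisms of $\MF(\AA^{n+1},W_{n+1})^{\TT^n}$. So it suffices to show each $f_{ij}$ is sent by $\Phi$ to the unique nonzero morphism $g_{ij}:\delta_{F_i,P}[1]\to\delta_{F_j,P}\langle\lambda_i\rangle$. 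This is read off from the colimit description: a representative of $f_{ij}$ is the map exhibiting $\cO_n^j$ as the cone on
\[
\lim\bigl(\calO_{\AA^{\{i\}^c}}\to\calO_{\AA^{\{i,j\}^c}}\leftarrow\calO_{\AA^{\{j\}^c}}\bigr)[1]\longrightarrow\cO_{\AA^{\{i\}^c}}[1],
\]
and applying $\bar\Phi$ gives the analogous presentation of $g_{ij}$ via rank-one branes. This single structural check replaces your entire facet-counting program.
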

\begin{proof}
To see that $\bar\Phi$ factors through the projection
$$\xymatrix{
\Coh(W_{n+1}^{-1}(0))^{\TT^n}\ar[r]&\Coh(W_{n+1}^{-1}(0))^{\TT^n}/\Perf(W_{n+1}^{-1}(0))^{\TT^n}\cong
\MF(\AA^{n+1},W_{n+1})^{\TT^n},
}$$
we need only to check that the structure sheaf $\calO_{W_{n+1}^{-1}(0)}$ is sent to 0 by $\bar\Phi.$ The structure sheaf $\calO_{W_{n+1}^{-1}(0)}$ of the colimit $\colim\AA^I$ is presented as the limit of the structure sheaves $i_{I*}\calO_{\AA^I}$ (where $i_{I}$ is the inclusion of $\AA^I$ into $\colim\AA^I$). The image of this object under $\bar\Phi$ is the limit of the rank-one branes $\calB_{P,I},$ which is zero, as required.

Hence $\bar\Phi$ does indeed induce a map $\MF(\AA^{n+1},W_{n+1})^{\TT^n}\to Q_{n-1}^{inf}$. Moreover, by construction this map sends the generators $\ul\cO_n^a$ to the skyscrapers $\delta_{F_i,P},$ which generate $Q_{n-1}^{wr}$, and so we see that $\bar\Phi$ factors through a map
$$
\xymatrix{
\Phi:\MF(\AA^{n+1},W_{n+1})^{\TT^n}\ar[r]& Q_{n-1}^{wr}.
}
$$

To show that this functor $\Phi$ is an equivalence, it suffices to check that each of the generating morphisms
$$
\xymatrix{
\ul\cO_n^i[1]\ar[r]^-{f_{ij}}&\ul\cO_n^j\langle \lambda_i \rangle
}
$$
for the category $\MF(\AA^{n+1},W_{n+1})^{\TT^n}$ is sent by $\Phi$ to the unique nonzero morphism
$$
\xymatrix{
\delta_{F_i,P}[1]\ar[r]&\delta_{F_j,P}\langle\lambda_i\rangle,
}
$$
which we will denote by $g_{ij}.$ This follows from the fact that a representative for $f_{ij}$ in the colimit presentation of $\Coh(W_{n-1}^{-1}(0))^{\TT^n}$ is the map presenting $\calO_n^j$ as the cone on the map
$$
\xymatrix{
\lim(\calO_{\AA^{\{i\}^c}}\ar[r]&\calO_{\AA^{\{i,j\}^c}}&\calO_{\AA^{\{j\}^c}}\ar[l])[1]\ar[r]&\cO_{\AA^{\{i\}^c}}[1],
}
$$
so that $\Phi(f_{ij})$ is the map presenting $\delta_{F_j,P}$ as the cone on
$$
\xymatrix{
	\lim(\calB_{P,\{i\}^c}\ar[r]&\calB_{P,\{i,j\}^c}&\ar[l]\calB_{P,\{j\}^c})[1]\ar[r]&\calB_{P,\{i\}^c}[1]
}.
$$
But this is a presentation of the map $g_{ij},$ as desired. We conclude that $\Phi$ is a $\tilW_n$-equivariant equivalence of categories.
\end{proof}
%
%

\section{Symplectic geometry}

So far in this paper we have described a category $Q_{n-1}^{wr}$ and shown that it is equivalent to $\Coh(\AA_{n+1},W_{n+1})^{\TT^n}.$ However, we have not yet explained why the category $Q_{n-1}^{wr}$ is the $A$-model associated to the $\Lambda_n$-cover $\tilcalP_{n-1}$ of the pair of pants. In this section, we will recall our perspective on the $A$-model of a Weinstein manifold as a category of wrapped microlocal sheaves on a Lagrangian skeleton, and, using the skeleton for the pair of pants described in \cite{Nwms}, we will show that our category $Q_{n-1}^{wr}$ is the $A$-model category associated to $\tilcalP_{n-1}$ in this formalism. This establishes the main equivalence of our paper as an instance of homological mirror symmetry.


\subsection{Microlocal $A$-model}
We recall here some properties of microlocal sheaf categories. We refer to \cite{KS} for definitions and and a full exposition of the theory of microlocal sheaves, and to~\cite{Nwms} for a brief review of the theory along with the definition of the wrapped microlocal sheaf categories.

\label{sec:A-cosheaf}


\subsubsection{Setup}

Let $Z$ be a real-analytic manifold. We will denote by $\Sh^\un(Z)$ the dg category of  all complexes of sheaves of $k$-vector spaces on $Z$
for which there exists a Whitney stratification
$\cS=\{Z_\alpha\}_{\alpha\in A}$ of $Z$
such that for each stratum $Z_\alpha \subset Z$, the total cohomology sheaf of the restriction
$\cF|_{Z_\alpha}$ is locally constant. We will denote by 
$\Sh(Z)$ the full subcategory of $\Sh^\un(Z)$ on the sheaves whose cohomology sheaves on each stratum are finite rank. 

We would like to consider the subcategories of $\Sh^\un(Z)$ defined by singular support conditions, which we recall now. Fix a point $(z, \xi) \in T^* Z$. Let $B\subset Z$ be an open ball around $z\in Z$,
and $f:B\to \BR$ a smooth function such that $f(z) = 0$ and $df|_z = \xi$.  
We will refer to $f$ as a compatible test function.
 
Then the \emph{vanishing cycles functor} $\phi_f$ associated to the function $f$ is defined by
 $$
 \xymatrix{
 \phi_f:\Sh^\un(Z)\ar[r] & \Mod_k,
 }
 $$
 $$
 \xymatrix{
 \phi_f(\cF) = \Gamma_{\{f\geq 0\}}(B, \cF|_B) \simeq  
 \Cone(\Gamma(B, \cF|_B) \to \Gamma(\{f< 0\}, \cF|_{ \{f< 0\}}))[-1],
 }
 $$
 where we take $B \subset Z$ sufficiently small.
 In other words, we take sections of $\cF$ over the ball $B$ supported where $f\geq 0$, or equivalently vanishing where $f< 0$. 

To any object $\cF \in\Sh^\un(Z) $, we can associate its
\emph{singular support}
$$
\xymatrix{
\ssupp(\cF) \subset T^* Z
}$$ 
to be the smallest closed subset such that $\phi_f(\cF) \simeq 0$, for any 
$(z, \xi) \in T^*Z \setminus \ssupp(\cF)$, and any compatible test function $f$. The singular support $\ssupp(\cF)$ is a closed conic Lagrangian subvariety of $T^*Z.$

For a conic Lagrangian subvariety $\Lambda\subset T^*Z$,
we write $\Sh_{\Lambda}^\un(Z) \subset \Sh^\un(Z)$
(respectively $\Sh_{\Lambda}(Z)    \subset \Sh(Z)$) 
 for the full dg subcategory of objects $\cF \in \Sh^\un(Z)$ (respectively $\cF \in \Sh(Z)$)  with singular support satisfying 
$
\ssupp(\cF) \subset \Lambda.
$

\subsubsection{Microlocal sheaf categories}\label{s:microsh}

Now we can recall the definition of the microlocal sheaf and wrapped microlocal sheaf categories associated to a conic Lagrangian.

Let $\Lambda\subset T^*Z$ be a closed conic Lagrangian subvariety. To $\Lambda$ we can associate a conic sheaf of dg categories $\mu\Sh^\un_{\Lambda}$ on $T^*Z$ which is supported on $\Lambda$. Its global sections $\mu\Sh^\un(T^*Z)$ form the category of \emph{large microlocal sheaves along $\Lambda$.}

Since $\mu\Sh^\un_{\Lambda}$ is a sheaf, its definition can be stated locally. Let $(z,\xi)\in T^*Z$, and let $\Omega$ be a small conic open neighborhood of $(z,\xi).$ We will write $B=\pi(\Omega)$ for the projection of $\Omega$ to a small neighborhood of $z$ in $Z.$

If $\xi=0,$ so that $\Omega=T^*B,$ then 
we have a natural equivalence
$$
\xymatrix{
\Sh^\un_\Lambda(B) \ar[r]^-\sim & \mu\Sh^\un_\Lambda(\Omega)
}
$$
of the category of large microlocal sheaves along $\Lambda\cap\Omega$ with the category of large constructible sheaves on $B$ with singular support in $\Lambda.$

If $\xi\neq 0,$ so that $\Omega\cap Z=\emptyset,$ then the category of large microlocal sheaves on $\Lambda\cap\Omega$ is naturally equivalent to a dg quotient category,
 
$$
\xymatrix{
\Sh^\un_{\Lambda}(B, \Omega)/K^\un(B, \Omega)\ar[r]^-\sim  & \mu \Sh^\un_{\Lambda}(\Omega),
}
$$
where $\Sh^\un_{\Lambda}(B, \Omega)\subset \Sh^\un(B)$
is the full dg subcategory of objects $\cF\in \Sh^\un(B)$ with singular support satisfying $\ssupp(\cF) \cap \Omega \subset \Lambda$ and $K^\un(B, \Omega)\subset\Sh^\un_{\Lambda}(B, \Omega)$ denotes the full dg subcategory of 
 objects $\cF\in \Sh^\un(B)$ with singular support satisfying $\ssupp(\cF) \cap \Omega = \emptyset$.

The main fact we will need about the calculation of these microlocal sheaf categories is the calculation, done in \cite{Narb}, that the category of microlocal sheaves on an arboreal singularity of type $A_n$ is equivalent to the category of modules over the $A_n$ quiver.

Now we recall from \cite{Nwms} the category of wrapped microlocal sheaves:
\begin{defn}
The category of \emph{wrapped microlocal sheaves along $\Lambda\cap \Omega$} is the full dg subcategory
$$
\xymatrix{
\mu\Sh^{wr}_\Lambda(\Omega)\subset \mu\Sh^\un_\Lambda(\Omega).
}
$$
of compact objects inside the category $\mu\Sh^\un_\Lambda(\Omega)$ of big microlocal sheaves.
\end{defn}
In that paper was proved the following fact:
\begin{prop}[\cite{Nwms} Proposition 3.16] The categories $\mu Sh^{wr}_\Lambda(\Omega)$ assemble into a cosheaf of categories on $\Lambda$.
\end{prop}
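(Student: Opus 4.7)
The plan is to deduce the cosheaf property for $\mu\Sh^{wr}_\Lambda$ from the already-known sheaf property for the large-rank version $\mu\Sh^\un_\Lambda$, by passing to left adjoints of the restriction functors and then restricting to compact objects. The argument is a general principle about sheaves of presentable stable categories whose restriction functors are continuous, applied to the present microlocal situation.

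First, I would recall/establish that $\mu\Sh^\un_\Lambda$ is a sheaf of presentable $k$-linear stable dg categories on $T^*Z$, supported on $\Lambda$, with continuous (colimit-preserving) restriction functors. On the zero section this reduces to the classical fact that $\Sh^\un_\Lambda(-)$ is a sheaf of large dg categories on $Z$; off the zero section, sections are Verdier quotients $\Sh^\un_\Lambda(B,\Omega)/K^\un(B,\Omega)$, and the sheaf property is inherited from that of $\Sh^\un(-)$ together with the compatibility of the quotient construction with sheaf-level restrictions.

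Next, for any inclusion $\Omega'\subset\Omega$ of conic opens I would produce a left adjoint $L$ to $\mathrm{res}\colon \mu\Sh^\un_\Lambda(\Omega)\to\mu\Sh^\un_\Lambda(\Omega')$. A right adjoint exists automatically because both categories are presentable and $\mathrm{res}$ is continuous; existence of $L$ follows once one verifies that $\mathrm{res}$ also preserves small limits, which is checked locally and then descended through the Verdier quotient. I would then invoke the Lurie-type lemma cited elsewhere in this paper, which identifies a limit of a diagram in $\on{Pr}^L$ with the colimit of the opposite diagram along the left adjoints. Applied to a cover $\{\Omega_\alpha\}$ of $\Omega$, this rewrites the sheaf-theoretic limit formula for $\mu\Sh^\un_\Lambda(\Omega)$ as a colimit built from the $L$'s, exhibiting $\mu\Sh^\un_\Lambda$ as a cosheaf of presentable categories with corestriction maps given by these left adjoints.

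Finally, I would pass to compact objects. By the standard adjunction fact, the left adjoint of a continuous functor between compactly generated presentable categories preserves compactness if and only if the right adjoint preserves filtered colimits; since $\mathrm{res}$ is continuous it certainly preserves filtered colimits, so each $L$ preserves compact objects and thus restricts to a corestriction $\mu\Sh^{wr}_\Lambda(\Omega')\to\mu\Sh^{wr}_\Lambda(\Omega)$. Since passage to compact objects commutes with colimits in $\on{Pr}^L$ computed along left adjoints, the cosheaf condition for $\mu\Sh^\un_\Lambda$ descends to the cosheaf condition for $\mu\Sh^{wr}_\Lambda=(\mu\Sh^\un_\Lambda)^c$. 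The main obstacle is the middle step: confirming existence of $L$ and identifying it concretely enough in the Verdier-quotient setting to track through the descent, since this is where the formal categorical machinery meets the actual microlocal geometry and one must verify that the ambient sheaf-level left adjoint genuinely descends to the microlocal quotient.
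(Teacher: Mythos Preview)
The paper does not give a proof of this proposition; it simply cites it as Proposition~3.16 of \cite{Nwms}. Your outline is the standard argument and is essentially what is carried out there: one starts from the sheaf $\mu\Sh^\un_\Lambda$ of presentable categories, uses the presentable limit--colimit identification (the Lurie/Gaitsgory lemma you invoke, and which this paper also quotes as \cite[Lemma~1.3.3]{G}) to rewrite the sheaf condition as a colimit along left adjoints, and then passes to compact objects. Note that this same maneuver appears in the present paper in the comparison of $Q^{wr}_{n-1}$ with $(Q^\Diamond_{n-1})^c$, so your strategy is exactly in line with the paper's methods.

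Two small points worth tightening. First, your sentence about the existence of the left adjoint $L$ is phrased slightly awkwardly: what you actually need is that restriction preserves limits (it does, since on the level of $\Sh^\un$ it is an open pullback $j^*$ with both adjoints $j_!$, $j_*$, and this survives the Verdier quotient because the kernel $K^\un(B,\Omega)$ is preserved by the relevant pushforwards); once that is in hand the adjoint functor theorem gives $L$. Second, the claim that ``passage to compact objects commutes with colimits in $\mathrm{Pr}^L$'' is only literally true up to idempotent completion, so the resulting cosheaf is one of idempotent-complete small categories. This is harmless here but worth saying explicitly.
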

We will refer to the global sections of this cosheaf as the category of \emph{wrapped microlocal sheaves along $\Lambda.$}

\begin{remark}The cosheaf of wrapped microlocal sheaf categories as defined above is a dg rather than $\BZ/2$-dg category--\emph{i.e.}, it possesses a natural $\ZZ$-grading, equivalent to the canonical grading on the Fukaya category of a cotangent bundle. However, later on, we will be interested in gluing together different cotangent bundles, where these gradings will no longer agree (unless we make some additional choices). Thus, we will forget the $\ZZ$-grading on $\mu\Sh^{wr}_\Lambda$ and for the rest of this paper will work instead with a $\BZ/2$-graded version, which we denote by $(\mu\Sh^{wr}_\Lambda)_{\BZ/2}.$
\end{remark}

\subsubsection{Skeleta and quantization categories}
Now we are almost ready to discuss the relation of this paper to Fukaya categories. Recall first the definition of a Weinstein manifold:
\begin{defn}A \emph{Weinstein manifold} $(W,\omega,Z,h)$ is a symplectic manifold $(W,\omega)$ along with a vector field $Z$ satisfying the Liouville condition $\calL_Z\omega=\omega$ and a Morse function $h:W\to\RR$ for which the Liouville field $Z$ is gradient-like.
\end{defn}

We will write $\lambda$ for the Liouville 1-form (corresponding to $Z$ under the equivalence given by $\omega$), and we will often refer to the Weinstein manifold $(W,\omega,Z,h)$ by $W$ when the other data are understood. The basic references for the theory of Weinstein manifolds are \cite{CE,E}, where details and elaborations of the material described here can be found.

To a Weinstein manifold is associated a canonical \emph{skeleton} $\LL$, given as the union of stable manifolds for flow of the Liouville field $Z$. In other words, if we denote by $\phi^t$ the time $t$ flow of $Z$, then the skeleton $\LL_W$ (or just $\LL$ if $W$ is understood) of $W$ is defined by
$$
\xymatrix{
\LL_W=\{x\in W\mid \lim_{t\to\infty}\phi^t(x)\in \on{Crit}(h)\}.
}
$$
The Liouville flow gives a retraction of $W$ onto $\LL.$

Weinstein manifolds are often understood by gluing together Weinstein pairs. A \emph{Weinstein pair} is the data of a Weinstein manifold $W^{2n}$ along with a Weinstein manifold $\Sigma^{2n-2}$ embedded in the ideal contact boundary of $W$, such that the Liouville form on $\Sigma$ is obtained by restriction of the contact form from $\partial W.$ We refer for details to \cite{E} (or to \cite{GPS1}, where these are called \emph{sectors}). There is a notion of skeleton for a Weinstein pair $(W,\Sigma),$ defined by 
$$
\xymatrix{
\LL_{(W,\Sigma)}:=\{x\in W\mid \lim_{t\to \infty}\phi^t(x)\in\on{Crit}(h)\cup \Sigma\}.
}
$$
In other words, the skeleton of a Weinstein pair $(W,\Sigma)$ is the union of $\LL_W$ with the cone (under the Liouville flow) for the skeleton of $\Sigma.$

%

The cosheaf of $\BZ/2$-dg categories $(\mu\Sh^{wr}_\Lambda)_{\BZ/2}$ defined in the previous section is expected to be of use in computing the wrapped Fukaya category $\on{Fuk}^{wr}(W)$ of a Weinstein manifold $W$, defined in the standard way through counts of holomorphic disks. We state this as the following conjecture (an elaboration of the original conjecture of Kontsevich from \cite{SGoHA}):
\begin{conj}\label{conj-cosheaf}
Let $W$ be a Weinstein manifold (or Weinstein pair) with skeleton $\LL$.
\begin{enumerate}
\item There is a cosheaf of $\ZZ/2$-dg categories, which we denote by $\wmsh,$ on the space $\LL$ such that $\wmsh(\LL)$ is equivalent to the wrapped Fukaya category $\on{Fuk}^{wr}(W).$ (If $W$ is a Weinstein pair, this is the partially wrapped category, with stops determined by $\Sigma.$)
\item If $W\cong T^*X$ (with standard cotangent Liouville structure but possibly also with Weinstein pair structure) and we write $\Lambda$ for the skeleton of $T^*X$, then on the space  $\Lambda\cong\LL$, there is an equivalence of cosheaves $\wmsh\cong (\mu\Sh_\Lambda^{wr})_{\BZ/2}.$
\end{enumerate}
\end{conj}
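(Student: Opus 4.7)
The plan is to follow the sectorial descent framework developed by Ganatra--Pardon--Shende. For part (1), the basic strategy is to exhibit $\wmsh$ as the unique cosheaf on $\LL$ whose value on a neighborhood $U$ of a compact piece of the skeleton is the partially wrapped Fukaya category $\on{Fuk}^{wr}(W_U)$ of the corresponding Weinstein subsector $W_U$ (the Liouville completion of a neighborhood of $U\cap \LL$, stopped along the Weinstein hypersurface induced by the boundary of $U$). The key input is a Mayer--Vietoris property: if $W = W_1 \cup_{W_{12}} W_2$ is a pushout of Weinstein pairs along a Weinstein hypersurface $W_{12}$, one establishes a cofiber sequence
\begin{equation*}
\on{Fuk}^{wr}(W_{12}) \longrightarrow \on{Fuk}^{wr}(W_1) \oplus \on{Fuk}^{wr}(W_2) \longrightarrow \on{Fuk}^{wr}(W)
\end{equation*}
of pretriangulated $\ZZ/2$-dg categories. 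Iterating this over any covering of $\LL$ by skeleta of sectorial subpieces gives the cosheaf axiom. The technical obstacle is that this descent must be upgraded to a coherent statement in $\ZZ/2\on{-dgst}_k$, which requires working throughout with an $\infty$-categorical enhancement of wrapped Fukaya categories so that all gluing data (not just the underlying homotopy categories) are captured.

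For part (2), once $\wmsh$ is available on both sides, the equivalence $\wmsh \cong (\mu\Sh^{wr}_\Lambda)_{\ZZ/2}$ is local on $\Lambda$ and can be checked stratum by stratum. At smooth points of $\Lambda$, both cosheaves take the value $(\Perf_k)_{\ZZ/2}$, with the identification coming from the Nadler--Zaslow equivalence applied to a contractible conic neighborhood. At singular points, one reduces to a standard local model. The crucial case for the present paper is the $A_m$-arboreal singularity $\calL_{A_m}$: the microlocal side is computed in \cite{Narb} to be $(A_m\on{-Perf}_k)_{\ZZ/2}$, and one proves the matching Floer-theoretic calculation by exhibiting $m+1$ Lagrangian branes filling the cone on $\calL_{A_m}$, computing their wrapped morphism complexes directly (the positivity of the Reeb flow near an arboreal Legendrian makes all higher $A_\infty$ operations vanish), and identifying the resulting $A_\infty$-algebra with the path algebra of $A_m$. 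Combining this with the fact, established in \cite{Nexp}, that every Legendrian singularity admits an arboreal deformation preserving microlocal sheaves, reduces the general local comparison to the arboreal case.

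The hardest step is establishing the Mayer--Vietoris property in the generality needed: one must handle sectors whose boundary Weinstein hypersurfaces are themselves singular along their skeleta, and one must verify that the stops introduced by a sectorial boundary correspond on the microlocal side exactly to the conic support conditions used to define $\mu\Sh^{wr}_\Lambda$. A secondary obstacle is the grading issue: both $\on{Fuk}^{wr}(W)$ and $\mu\Sh^{wr}_\Lambda$ carry natural $\ZZ$-gradings, but the equivalence on overlaps will require the $\ZZ/2$-reduction unless compatible relative Maslov data are chosen globally; for the honeycomb application this reduction is harmless but in general it is the correct way to state part (2). Once both of these are handled, the pointwise local equivalences glue, via the cosheaf axiom of part (1), to the desired global equivalence of cosheaves, and hence identify the wrapped Fukaya category of $\tilcalP_{n-1}$ with the global sections $Q^{wr}_{n-1}$ computed combinatorially in this paper.
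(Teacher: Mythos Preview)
The statement you are attempting to prove is explicitly labeled a \emph{Conjecture} in the paper, and the paper does not offer a proof. Immediately after stating it, the authors include remarks recording its status: for part~(1), covariance for inclusions of Weinstein pairs is established in \cite{GPS1}, with full sectorial descent (which would yield the cosheaf property) anticipated in the then-forthcoming \cite{GPS2}; for part~(2), they describe it as a mild enhancement of the Nadler--Zaslow equivalence \cite{Ncot,NZ}. There is thus no proof in the paper against which to compare your proposal.

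That said, your outline is essentially the strategy the authors gesture at in those remarks, and it is the approach that was eventually carried out in the Ganatra--Pardon--Shende program. A minor divergence: the paper does not invoke \cite{Nexp} to reduce the general local comparison to the arboreal case, nor does it propose a direct Floer-theoretic computation of wrapped morphisms near an $A_m$ singularity. For the honeycomb application only $A_m$ singularities occur, and the paper simply cites the microlocal computation of \cite{Narb} together with the cyclic functor $\calQ$ of \cite{Ncyc}; the matching on the symplectic side is left to the conjecture. Your sketch of that matching (exhibiting $m+1$ branes and computing their wrapped endomorphisms) is plausible but goes beyond anything the paper attempts.
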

\begin{remarks}\leavevmode
\begin{enumerate}
	\item That the Fukaya category possesses the appropriate covariance properties for inclusions of Weinstein pairs is proved in \cite{GPS1}; a full proof of descent, which would imply part (1) of the conjecture, is expected to appear in a forthcoming sequel to that work.
	\item Since the first appearance of this article, part (2) of this conjecture has been proved in \cite{GPS3}.
\end{enumerate}
\end{remarks}
%
%
%

Part (2) tells us a how to construct the conjectural cosheaf: for each point $p$ in the skeleton $\LL$ of $W$, take some neighborhood $p\in U\subset W$ and an equivalence between $(U,\LL\cap U)\cong (T^*X,\LL_X),$ where $T^*X$ is some cotangent bundle equipped with a Weinstein pair structure with associated skeleton $\LL_X$; then define $\wmsh|_U$ to be the cosheaf $(\mu\Sh^{wr}_{\LL_X})_{\BZ/2},$ and check that the resulting cosheaf is independent of choices. A detailed construction of this cosheaf, through a procedure slightly different to the one described here, can be found in \cite{S}.

In the case of interest to us, the calculation of the cosheaf will be especially easy, since all the singularities which appear in the skeleton we describe for the pair of pants will be arboreal singularities of type $A_m$, for some $m,$ in the sense of \cite{Narb}. The appropriate microlocal sheaf calculation in this case is already known, and the independence of the above construction on choices follows from our earlier discussion of the construction from \cite{Ncyc} of the functor $\calQ.$



\subsection{The permutohedron skeleton}\label{subsec:skel}
In this section, we will show that the quotient of the honeycomb $\frH_{n-1}$ by translations in $\Lambda_n$ actually appears as a skeleton for the $(n-1)$-dimensional pair of pants $\calP_{n-1}$; or equivalently, that the boundary of the tiling of $\RR^n$ by $\frP_{n+1}$ is a skeleton for the universal abelian cover of the pair of pants.

Recall that the standard \emph{$(n-1)$-dimensional pair of pants} is the complex variety
$$
\xymatrix{
\calP_{n-1}=\{z_1+\cdots+z_n+1=0\}\subset(\CC^\times)^n.
}
$$

Define the variety $Y_{n-1}$ by
$$
\xymatrix{
Y_{n-1}:=\{z_1+\cdots+z_n+\frac{1}{z_1\cdots z_n}=0\}\subset(\CC^\times)^n
}.
$$
It has a free action of the group $\ZZ/(n+1)$, generated by $(z_1,\ldots,z_n)\mapsto (\zeta z_1,\ldots,\zeta z_n),$
where $\zeta$ is a primitive $(n+1)^{\text{st}}$ root of unity, whose quotient is the pair of pants.

The reason we begin by studying the $(n+1)$-fold cover $Y_{n-1}$ of $\calP_{n-1}$ instead of the pair of pants itself is that a procedure for constructing a permutohedron skeleton of the former variety has already been described (though not in those terms) in the paper \cite{FU}, so working with $Y_{n-1}$ allows us to appeal to their calculation directly.

The trick from \cite{FU} involves describing the spaces $Y_{n-1}$ inductively: the space $Y_n$ admits a description as the total space of a Lefschetz fibration with fiber $Y_{n-1}.$ As a consequence, we will see that a skeleton for $Y_n$ can be obtained by attaching $n+1$ handles to a skeleton for $Y_{n-1}.$

This Lefschetz fibration is the map
$$
\xymatrix{
Y_n\ar[r]^{p_n}&\CC^\times, & (z_1,\ldots,z_{n+1})\mapsto z_{n+1}.
}
$$
It has $n+2$ critical points $\{(\zeta_k,\ldots,\zeta_k,-(n+1)\zeta_k)\}_{k=0,\ldots,n+1},$ where $\zeta_k$ are $(n+2)^{\text{nd}}$ roots of $\frac{-1}{n+1},$ and hence $n+1$ critical values $\{-(n+1)\zeta_k\}_{k=0,\ldots,n+1}.$
This Lefschetz fibration gives us a very convenient presentation of the Liouville structure on the total space $Y_n$:


\begin{theorem}[\cite{FU}, Theorem 1.5]
	Let $\LL_{Y_n}$ denote the skeleton of $Y_n.$ The restriction to $\LL_{Y_n}$ of the argument projection $\Arg:(\CC^\times)^{n+1}\to T^{n+1}$ to the $(n+1)$-torus is a finite map, and its image $\Arg(\LL_{Y_n})$ divides $T^{n+1}$ into $n+2$ $(n+1)$-permutohedra $\frP_{n+1}$. Moreover, the monodromy of the fibration $p_n$ cylically permutes these permutohedra.
\end{theorem}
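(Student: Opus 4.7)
The plan is to argue by induction on $n,$ exploiting the Weinstein Lefschetz fibration structure of $p_n : Y_n \to \CC^\times.$ The base case $n=0$ is immediate: $Y_0=\{z+1/z=0\}=\{\pm i\}\subset\CC^\times$ is zero-dimensional and equals its own skeleton, whose image $\{\pm\pi/2\}\subset S^1$ divides $T^1$ into two arcs (that is, two copies of $\frP_1$), cyclically exchanged by the monodromy around $0.$

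For the inductive step, I would endow $Y_n$ with a Weinstein structure compatible with $p_n$ (\emph{cf.}\ \cite{GPS1,E}): decompose the symplectic form as a sum of horizontal and vertical pieces so that the fiber Weinstein structure is the one provided by the inductive hypothesis, and the horizontal Liouville field projects to $r\partial_r$ on $\CC^\times.$ The skeleton $\LL_{Y_n}$ of such a fibration decomposes into (H) the preimage of the core circle $S^1\subset\CC^\times,$ which is a mapping torus of $\LL_{Y_{n-1}}$ under the total monodromy of $p_n,$ and (T) $n+2$ Lagrangian thimbles $T_0,\dots,T_{n+1},$ one over each radial arc from a critical value to the core circle.

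By induction, $\Arg(\LL_{Y_{n-1}})$ partitions $T^n$ into $n+1$ copies of $\frP_{n-1}$ cyclically permuted by the fiber monodromy, so (H) projects under $\Arg$ to a twisted extrusion of this honeycomb along the $\Arg(z_{n+1})$-circle, cutting $T^{n+1}$ into $n+1$ prismatic slabs. The remaining work is to compute the vanishing cycles of $p_n$ and the $\Arg$-images of each thimble, then to verify that together with (H) they cut $T^{n+1}$ into exactly $n+2$ copies of $\frP_n.$ Here the diagonal $\ZZ/(n+2)$-action $(z_1,\dots,z_{n+1})\mapsto(\zeta z_1,\dots,\zeta z_{n+1})$ with $\zeta^{n+2}=1$ does a lot of work: it preserves $Y_n,$ acts on $\Arg$ by a diagonal translation, and cyclically permutes the critical points---reducing the vanishing-cycle computation to a single critical point and yielding the monodromy statement automatically.

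The expected main obstacle is the explicit identification of the vanishing cycles and their argument projections. The holomorphic Morse lemma realizes each vanishing cycle as a Lagrangian sphere $S^{n-1}$ in the fiber, but to read off its coamoeba one must choose Weinstein data so that the vanishing cycle lies inside a single $\Arg$-fiber of $Y_{n-1}$; granting this, the thimble projections become codimension-one affine hyperplanes in $T^{n+1}$ transverse to the extruded slabs, and a direct face-count (using the zonotopal Minkowski-sum description of $\frP_n$) then confirms that the complement has $n+2$ components, each isomorphic to $\frP_n.$
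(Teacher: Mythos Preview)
Your proposal is correct and follows essentially the same inductive strategy as the paper: both arguments build the skeleton of $Y_n$ from the Lefschetz fibration $p_n$ as a mapping torus of the fiber skeleton together with thimbles over the critical values, and both defer the explicit identification of the vanishing cycles and their coamoebae to the computations in \cite{FU}. Your explicit invocation of the diagonal $\ZZ/(n+2)$-symmetry to reduce to a single vanishing cycle is a nice organizational touch that the paper leaves implicit.
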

\begin{proof}
We will indicate here only the modifications to the argument from \cite{FU} which are necessary in order to understand $\LL_{Y_n}$ as a Weinstein skeleton; the remainder of the calculations can be found there.

The proof is by induction. The base case $n=1$ is clear, so assume the theorem for $Y_{n-1}.$

We can use the Lefschetz fibration $p_n$ to construct a skeleton for $Y_n$ as follows: first, let $U$ be a neighborhood of $S^1\subset \CC^\times_{z_n}$ which does not contain any critical values of $p_n.$ Then $p_n^{-1}(U)$ has a skeleton $\LL'$ which is given by the mapping torus of the monodromy transformation on a skeleton of the general fiber. From our induction hypothesis, we can see that this skeleton divides the $(n+1)$-torus into an oblique cylinder over the $n$-permutohedron.

So far we have described a Liouville structure and skeleton for $p_n^{-1}(U)$; a Liouville structure for the total space $Y_n=p_n^{-1}(\CC^\times)$ comes from extending this Liouville structure over the $n+2$ handles attached at the critical points of $p_n.$ This results in a skeleton $\LL_{Y_n}$ for $Y_n$ obtained by attaching $n+2$ disks to $\LL'.$

The locations of the vanishing cycles along which these disks are glued, and the resulting permutohedra, can be found in \cite{FU}.
\end{proof}
\begin{cor}\label{cor:honeycombskel}
The pair of pants $\calP_n$ has a skeleton $\LL_n$ whose image under $\Arg$ divides the torus $T^{n+1}$ into a single permutohedron; equivalently, the universal abelian cover of $\calP_n$ has a skeleton $\tilLL_n$ whose image under $\Arg$ is the honeycomb lattice $\frH_n.$
\end{cor}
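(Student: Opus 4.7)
The plan is to derive the skeleton for $\calP_n$ (and for its universal abelian cover) by quotienting the skeleton $\LL_{Y_n}$ produced in the theorem by the free $\BZ/(n+2)$ deck action of $Y_n\to\calP_n$. Recall this action is the diagonal rotation $(z_1,\ldots,z_{n+1})\mapsto(\zeta z_1,\ldots,\zeta z_{n+1})$ with $\zeta=e^{2\pi i/(n+2)}$.

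The first step I would carry out is to check that this action preserves, or can be arranged to preserve, the Liouville structure on $Y_n$ used in the proof of the theorem. The key observation is that the action intertwines with the Lefschetz fibration $p_n$, descending to rotation by $\zeta$ on the base $\CC^\times$; this base rotation cyclically permutes the $n+2$ critical values $(n+1)\zeta_k$ and their corresponding critical points in $Y_n$. By arranging the neighborhood $U$ of $S^1\subset\CC^\times$, the Liouville $1$-form on $p_n^{-1}(U)$ coming from the mapping torus of monodromy, and the handle-attaching data at the critical points all to be $\BZ/(n+2)$-equivariant, one obtains an equivariant Weinstein structure on $Y_n$ whose skeleton is setwise preserved. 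Since the deck action on $Y_n$ is free, the quotient $\LL_n := \LL_{Y_n}/(\BZ/(n+2))$ inherits the structure of a Weinstein skeleton of $\calP_n$.

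Next I would descend the argument image. The deck action covers the diagonal translation on $T^{n+1}$ by $(\tfrac{2\pi}{n+2},\ldots,\tfrac{2\pi}{n+2})$, so $\Arg$ induces a map $\calP_n\to T^{n+1}/(\BZ/(n+2))$ whose image on $\LL_n$ is $\Arg(\LL_{Y_n})/(\BZ/(n+2))$. The theorem's statement that the monodromy of $p_n$ cyclically permutes the $n+2$ permutohedra translates into the assertion that these permutohedra form a single orbit of the diagonal action, so their image in the quotient torus is a single permutohedron, giving the first half of the corollary. For the equivalent formulation, I would lift to the universal abelian cover $\tilcalP_n$, constructed by the pullback diagram analogous to the one for $\tilcalP_{n-1}$ in the introduction with $T^*\RR^{n+1}\to T^*(S^1)^{n+1}$ in place of $T^*\RR^n\to T^*(S^1)^n$. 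Identifying $\RR^{n+1}\cong V_{n+1}$ so that the full deck lattice of $\tilcalP_n\to\calP_n$ (the standard lattice of $T^{n+1}$ together with the extra generator $\tfrac{1}{n+2}(1,\ldots,1)$ introduced by the $\BZ/(n+2)$-quotient) becomes $\Lambda_{n+1}$, the $\Lambda_{n+1}$-translates of $\partial\frP_{n+1}$ fill $V_{n+1}$ as exactly the honeycomb $\frH_n = \partial\frP_{n+1} + \Lambda_{n+1}$.

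The main obstacle is the equivariance check of the first step: the theorem's inductive construction of $\LL_{Y_n}$ makes a chain of choices (Liouville forms, cutoffs, handle-attaching isotopies), and one must verify each can be performed $\BZ/(n+2)$-equivariantly, using that the base rotation preserves the set of critical values and that the inductive input, a Weinstein structure on the fiber $Y_{n-1}$, can be assumed equivariant by induction. Once this is in hand, the remainder of the argument is bookkeeping, matching the two presentations of the quotient torus $T^{n+1}/(\BZ/(n+2))\cong V_{n+1}/\Lambda_{n+1}$ and confirming that the permutohedral tiling is preserved under this identification.
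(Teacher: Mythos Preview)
Your approach is essentially the same as the paper's: both obtain $\LL_n$ as the quotient of $\LL_{Y_n}$ by the free $\BZ/(n+2)$ deck action, using that this action cyclically permutes the $n+2$ permutohedra. The paper phrases the equivariance step more tersely---it simply notes that the Lefschetz fibration $p_n$ descends to a fibration $\bar p_n:\calP_n\to\CC^\times/(\BZ/(n+2))\cong\CC^\times$, so the inductive Weinstein construction from the theorem (which was built entirely out of $p_n$) descends with it---whereas you spell out the equivariance of each ingredient; but this is the same argument. For the second statement the paper is also briefer than your lattice bookkeeping: it just invokes a diffeomorphism of $T^{n+1}$ (equivalently a symplectomorphism of $(\CC^\times)^{n+1}$) to put the single permutohedron in standard position, after which the universal-cover lift automatically matches $\frH_n$.
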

\begin{proof}
The pair of pants $\calP_n$ has a Lefschetz fibration $\bar{p}_n:\calP_n\to\CC^\times/\ZZ/(n+2)\cong \CC^\times$ obtained from the Lefschetz fibration $p_n$ by a $\ZZ/(n+2)$ quotient. (In standard coordinates on $\calP_n,$ this is the map
$(z_1,\ldots,z_{n+1})\mapsto\frac{z_{n+1}^{n+1}}{z_1\cdots z_n}.$)

Hence the skeleton $\LL_n$ can be obtained as the quotient of $\LL_{Y_n}$ by the monodromy transformation, which cyclically exchanges the permutohedra into which $T^{n+1}$ is divided; this gives us the desired description of $\LL_n.$ Moreover, by a diffeomorphism of $T^{n+1}$ (and hence by a symplectomorphism of $T^*T^{n+1}\cong(\CC^\times)^{n+1}$) we can assume this permutohedron is in standard position on $T^{n+1},$ so that $\Arg(\tilLL_n)$ is equal to $\frH_n.$
\end{proof}

Finally, we want to show that our combinatorial cosheaf from Section 2 is the same as the microlocal cosheaf $\wmsh$ described in Section~\ref{sec:A-cosheaf}. This latter cosheaf, for the cover $\tilcalP_{n-1}$ of the pair of pants, is a cosheaf on the space $\LL_{n-1},$ but by pushing forward along $\Arg$ we can equivalently consider this as a cosheaf on $\frH_{n-1}.$

\begin{prop}\label{prop:miccat} There is an equivalence $\wmsh\cong \calQ_{n-1}^{wr}$ of cosheaves of dg categories on the space $\frH_{n-1}.$
\end{prop}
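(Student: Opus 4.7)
The plan is to exploit constructibility to reduce the equivalence of cosheaves to a statement about the functors they determine on the exit-path category $P(\frH_{n-1})$, then to match those functors locally stratum by stratum using the arboreal description of the singularities. First, both cosheaves are constructible with respect to the face stratification of $\frH_{n-1}$: for $\calQ^{wr}_{n-1}$ this is built into its definition as $\calQ^{wr}\circ\tilcalO_{n-1}$, while for $\wmsh$ it follows because the contact geometry of $\tilLL_{n-1}$ is locally constant along each open face (both the Weinstein handle attachments of the fibration $p_n$ from Section~\ref{subsec:skel} and the Liouville flow respect the face stratification inherited from $\Arg(\tilLL_{n-1})=\frH_{n-1}$). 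Consequently each cosheaf is recovered from a functor $P(\frH_{n-1})^{\op}\to \ZZ/2\on{-dgst}_k$, and it suffices to produce a natural equivalence of those functors.

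Second, I would identify the value on a stratum. Fix $\alpha\in P(\frH_{n-1})$ of codimension $m$ and a point $p\in S_\alpha$. By Proposition~\ref{prop:sings}, a neighborhood of $p$ in $\frH_{n-1}$ is homeomorphic to $L_{A_m}\times\RR^{n-m}$. The main geometric input is to upgrade this topological identification to an equivalence of microlocal sheaf categories: namely, that there is an open set $U\subset T^*X$ containing a neighborhood of $p$ in $\tilLL_{n-1}$, symplectomorphic to a neighborhood of $\calL_{A_m}\times\RR^{n-m}\subset T^*\RR^n$ (possibly after smoothing the convex corners produced by the iterated Weinstein handle attachments in the Lefschetz fibration description of Section~\ref{subsec:skel}) such that the identification carries $\tilLL_{n-1}$ to the arboreal model. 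Granting this, the main theorem of \cite{Narb} yields a canonical equivalence $(\mu\Sh^{wr}_{\calL_{A_m}})_{\BZ/2}\simeq (A_m\on{-Perf}_k)_{\BZ/2}$, and the product structure with $\RR^{n-m}$ leaves the wrapped microlocal category unchanged. This matches $\calQ^{wr}_{n-1}(\alpha)=\calC_{\tilcalO_{n-1}(\alpha)}\simeq (A_m\on{-Perf}_k)_{\BZ/2}$, once we note that the cardinality of $\tilcalO_{n-1}(\alpha)$ equals $m+1$, the number of permutohedra containing $S_\alpha$.

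Third, I would match the data at the level of the cyclic structure and of the corestriction maps. The cyclic structure on the $A_m$ arboreal category comes from the $\ZZ/(m+1)$ symmetry permuting the sectors (connected components of the complement of the Legendrian) around the central singular locus; under $\Arg$, these sectors correspond exactly to the $m+1$ permutohedra incident on $S_\alpha$, and the cyclic order matches Definition~\ref{definition:cyc} by tracking the $\lambda_i$-translation structure. For a codimension drop $\alpha\to\beta$, the corestriction of $\wmsh$ is the adjoint to the microlocal restriction, and under the arboreal identification of \cite{Narb} this is the left adjoint to the quotient functor $(A_m\on{-Perf}_k)_{\BZ/2}\to(A_{m-1}\on{-Perf}_k)_{\BZ/2}$ obtained by killing the generator indexed by the permutohedron one stops passing through. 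This is precisely $\calQ^{wr}(i_k)=\calQ(i_k)^L$, so the functors agree on generating morphisms of $P(\frH_{n-1})$, and the inductive compatibility at higher-codimension faces (established in the corollary after Proposition~\ref{prop:sings}, describing how lower-dimensional singularities embed into higher-dimensional ones) ensures these equivalences assemble into a natural transformation.

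The main obstacle is the contact-geometric step in paragraph two: identifying the local Weinstein model of $\tilLL_{n-1}$ at a codimension-$m$ stratum with the arboreal Legendrian $\calL_{A_m}\times\RR^{n-m}$. The skeleton of the total space of the Lefschetz fibration $p_n$, as constructed via iterated Weinstein handle attachments along vanishing cycles in \cite{FU}, naturally comes with convex corners rather than in arboreal form, so either (a) one applies a Weinstein homotopy/smoothing to bring the skeleton into arboreal position, using the flexibility and deformation results of \cite{Nexp} that guarantee invariance of the microlocal sheaf category under such smoothings, or (b) one bypasses contactomorphism and proves directly that the unsmoothed local skeleton admits the same category of microlocal sheaves as the arboreal model by a local computation. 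Either route forms the technical heart of the proof; the rest is bookkeeping with the combinatorics already established in Section~2.
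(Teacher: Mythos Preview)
Your outline is correct and identifies the right technical crux (the contact-geometric identification of the local model with an arboreal singularity), but the paper's argument is more economical in two respects worth noting.

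First, the paper does not proceed stratum by stratum. It checks only at a single vertex $p$ of $\frH_{n-1}$: once the microlocal category at $p$ is identified with $(A_n\on{-Perf}_k)_{\BZ/2}$ together with its corestriction maps to adjacent higher-dimensional strata, the $\tilW_{n-1}$-symmetry of both cosheaves propagates this identification everywhere. Since every face is incident on some vertex, and the corestriction maps out of the vertex determine the values on all adjacent strata, this suffices. Your stratum-by-stratum matching is correct but redundant.

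Second, and more substantively, the paper resolves the contact-geometric step not by smoothing or by a direct local computation (your options (a) and (b)), but by \emph{induction on the dimension} of the pair of pants. The skeleton $\LL_{n-1}$ is obtained from the mapping torus of the monodromy action on $\LL_{n-2}$ by attaching a disk along a sphere transverse to the singularities. By induction, $\LL_{n-2}$ already has the arboreal local model at its vertices; attaching a disk transversely then exhibits a neighborhood of $p$ in $\LL_{n-1}$ as $(T^*\RR^{n-1},\calL)$ with $\calL$ the zero section together with cones on Legendrian lifts of $n-1$ hyperplanes, which is the standard $A_n$ arboreal model. This inductive argument is cleaner than either of the routes you propose, since it avoids both the machinery of \cite{Nexp} and any ad hoc local computation.
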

\begin{proof}
Let $p$ be a vertex in $\frH_{n-1}.$ We know that near $p,$ the space $\frH_{n-1}$ (or equivalently, the skeleton $\LL_{n-1}$) is stratified homeomorphic to the $A_n$ arboreal singularity. We need to show that at $p$, the skeleton $\LL_{n-1}$ actually has the correct microlocal sheaf category $(A_n\on{-Perf}_k)_{\BZ/2},$ with the appropriate (co)restriction maps. We can see this from the inductive description of the skeleton $\LL_{n-1}:$ this skeleton was obtained from the mapping torus $M_m$ of a monodromy action on $\LL_{n-2}$ by attaching a disk along a sphere transverse to the singularities of $M_m.$ Hence, by induction we see that there exists a neighborhood $p\in U\subset \calP_{n-1}$ and an equivalence $(U,\LL_{n-1}\cap U)\cong (T^*\RR^{n-1},\calL),$ where $\calL$ is the union of the zero section with the cone on Legendrian lifts of the $n-1$ hyperplanes, taking $p$ to $0$. This establishes the microlocal sheaf calculation, and by $\tilW_{n-1}$ symmetry this is sufficient to prove an equivalence of cosheaves.
\end{proof}

\begin{cor}[``Homological mirror symmetry for the pair of pants'']
There is an equivalence $\MF(\AA^{n+1},W_{n+1})^{\TT^n}\cong\wmsh(\tilLL_{n-1})$ between a category of equivariant matrix factorizations and a category of microlocal sheaves on the universal abelian cover of the pair of pants.
\end{cor}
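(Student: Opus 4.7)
The plan is to assemble the corollary by chaining together the three main results already established in the paper: Theorem~\ref{theorem:main}, Corollary~\ref{cor:honeycombskel}, and Proposition~\ref{prop:miccat}. No new geometry or algebra is required; the work consists entirely in verifying that these results can be composed at the level of global sections.

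First I would unpack what $\wmsh(\tilLL_{n-1})$ means. By Corollary~\ref{cor:honeycombskel}, the universal abelian cover $\tilcalP_{n-1}$ admits a Weinstein skeleton $\tilLL_{n-1}$ whose image under the argument map $\widetilde{\Arg}$ is the honeycomb $\frH_{n-1}\subset V_n$. Since $\widetilde{\Arg}|_{\tilLL_{n-1}}$ is a finite map onto $\frH_{n-1}$, I would push forward the cosheaf $\wmsh$ from $\tilLL_{n-1}$ to $\frH_{n-1}$; because pushforward along a finite map preserves colimits, global sections are unchanged, so $\wmsh(\tilLL_{n-1})\simeq (\widetilde{\Arg}_*\wmsh)(\frH_{n-1})$.

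Second, I would invoke Proposition~\ref{prop:miccat}, which provides an equivalence $\widetilde{\Arg}_*\wmsh\cong\calQ^{wr}_{n-1}$ of cosheaves of $\BZ/2$-dg categories on $\frH_{n-1}$. Taking global sections of both sides and then idempotent-completing (which is exactly how $Q^{wr}_{n-1}$ was defined, as the idempotent completion of $\colim_{P(\frH_{n-1})^{\op}}\calQ^{wr}_{n-1}$), this yields
\[
\wmsh(\tilLL_{n-1})\;\simeq\; Q^{wr}_{n-1}.
\]
Here I would also note that the $\tilW_n$-symmetry on both sides matches: on the $A$-side it comes from the $\Lambda_n$-translation symmetry of the cover together with the $\Sigma_{n+1}$-symmetry of the honeycomb, while on the combinatorial side it is built into $\calQ^{wr}_{n-1}$ by construction.

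Finally, combining this equivalence with Theorem~\ref{theorem:main}, which exhibits a $\tilW_n$-equivariant equivalence $\Phi:\MF(\AA^{n+1},W_{n+1})^{\TT^n}\isom Q^{wr}_{n-1}$, gives the desired chain
\[
\MF(\AA^{n+1},W_{n+1})^{\TT^n}\;\xrightarrow{\;\Phi\;}\; Q^{wr}_{n-1}\;\simeq\;\wmsh(\tilLL_{n-1}),
\]
proving the corollary. The only real conceptual content has already been absorbed into the prior results: the main obstacle was Proposition~\ref{prop:miccat} (identifying the microlocal cosheaf with the combinatorial one, which requires the inductive Weinstein description of $\tilLL_{n-1}$ via Lefschetz fibrations and the arboreal singularity calculation of \cite{Narb}); here the task is simply to observe that the equivalence passes to global sections compatibly. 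I would conclude by remarking that, granting Conjecture~\ref{conj-cosheaf} relating $\wmsh$ to the wrapped Fukaya category, the statement upgrades to the expected homological mirror symmetry equivalence between $\MF(\AA^{n+1},W_{n+1})^{\TT^n}$ and $\on{Fuk}^{wr}(\tilcalP_{n-1})$.
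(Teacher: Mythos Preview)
Your proposal is correct and matches the paper's approach exactly: the paper states this corollary without proof, as it is intended to be an immediate consequence of combining Proposition~\ref{prop:miccat} (which identifies the cosheaf $\wmsh$ on $\tilLL_{n-1}$, pushed forward to $\frH_{n-1}$, with $\calQ^{wr}_{n-1}$) and Theorem~\ref{theorem:main} (which identifies $Q^{wr}_{n-1}$ with $\MF(\AA^{n+1},W_{n+1})^{\TT^n}$). Your write-up in fact supplies more detail than the paper itself does, including the observation about idempotent completion and the remark about Conjecture~\ref{conj-cosheaf}.
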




\bibliographystyle{alpha}
\bibliography{bigbib}
\end{document}